\numberwithin{equation}{section}
\newtheorem{thm}{Theorem}[section]
\newtheorem{prop}[thm]{Proposition}
\newtheorem{lem}[thm]{Lemma}
\newtheorem{defn}[thm]{Definition}
\newtheorem{rem}[thm]{Remark}
\newtheorem{conj}[thm]{Conjecture}
\newtheorem{claim}[thm]{Claim}
\newcommand{\ds}{\displaystyle}
\newcommand{\tsscpp}{totally symmetric self-complementary plane partition}
\title[Maximal path fluctuations in ASMs]
{GOE fluctuations for the maximum of the top path in alternating sign matrices}
\author{Arvind Ayyer}
\address{Arvind Ayyer, Department of Mathematics, 
Indian Institute of Science, Bangalore  560012, India.}
\email{arvind@iisc.ac.in}
\author{Sunil Chhita}
\address{Sunil Chhita, Department of Mathematical Sciences, 
Durham University, Durham, UK}
\email{sunil.chhita@durham.ac.uk}
\author{Kurt Johansson}
\address{Kurt Johansson, Department of Mathematics, 
KTH Royal Institute of Technology, Stockholm, Sweden.}
\email{kurtj@kth.se}
\subjclass[2010]{60K35, 05A10, 82B23, 82B20}
\keywords{six-vertex model, domain wall boundary conditions, GOE Tracy--Widom distribution, alternating sign matrices, totally symmetric self-complementary plane partitions, Pfaffian point process}
\date{\today}
\begin{document}

\begin{abstract}
The six-vertex model is an important toy-model in statistical mechanics for two-dimensional ice with a natural parameter $\Delta$. 
When $\Delta = 0$, the so-called free-fermion point, the model is in natural correspondence with domino tilings of the Aztec diamond.
Although this model is integrable for all $\Delta$, there has been very little progress in understanding its statistics in the scaling limit for other values.
In this work, we focus on the six-vertex model with domain wall boundary conditions at $\Delta = 1/2$, where it corresponds to alternating sign matrices (ASMs). We consider the level lines in a height function representation of ASMs.
We show that the maximum of the topmost level line for a uniformly random ASMs has the GOE Tracy--Widom distribution after appropriate rescaling.
A key ingredient in our proof is Zeilberger's proof of the ASM conjecture.
As far as we know, this is the first edge fluctuation result away from the tangency points for the domain-wall six-vertex model when we are not in the free fermion case.
\end{abstract}

\maketitle

\section{Introduction} 
\label{sec:Introduction}

The six-vertex model (also known as the square ice model) can be thought of as a random assignment of arrows to the edges in $\mathbb{Z}^2$ such that the number of incoming arrows equals the number of outgoing arrows at every vertex. Each such configuration is given a Boltzmann weight which is the product of local weights at every vertex. {Homogeneity and symmetry impose} conditions on the nature of these local weights and there are three independent parameters, commonly denoted $a, b, c$. The dependent parameter $\Delta = (a^2 + b^2 - c^2)/(2ab)$ plays an important role in the phase diagram. See the book by Baxter~\cite{Bax82} for more details. One of the most important finite versions of the six-vertex model is on an $n \times n$ subset of $\mathbb{Z}^2$ with extra half-edges on the boundaries where vertical half-edges point outwards and horizontal ones point inwards. This is known as the \emph{six-vertex model with domain wall boundary conditions} (DWBC). 
In this paper, we focus on the edge fluctuations for
this model of size $n$ with $a = b = c$ when $n$ tends to infinity.

Random tiling models, and in particular random domino tilings of the Aztec diamond,  give a good blueprint for the possible asymptotic behavior seen in the six vertex model with domain wall boundary conditions.  {Over the past couple of decades}, there has been much progress in understanding this particular random tiling model as well as other random tiling models. In general, a  limiting curve emerges which separates the random tiling into three possible macroscopic regions: \emph{frozen}, where the tiling is deterministic; \emph{rough}, where the correlations between tiles decay polynomially; and \emph{smooth}, where the correlations decay exponentially.  {The limiting curve, known as the
{\emph{arctic curve}}
can be obtained from the limiting height function, which is the solution of a variational principle \cite{CKP01}. There are} general methods to quantify this solution  depending on the type of random tiling model and its boundary conditions \cite{KO07, Duse}.   

Edge fluctuation results for random tilings of Aztec diamonds at the interface between the frozen and rough regions, where  the Airy-2 process governs the fluctuations of the interface after rescaling, have been obtained in \cite{Jo03}. The interface between the rough-smooth regions has been investigated in the two-periodic Aztec diamond where it is expected that  the Airy-2 process governs the interface after rescaling, see \cite{CJ16, DK21, BCJ20, BD19}. These results are not unique to domino tilings of the Aztec diamond and, except the fluctuations at the rough-smooth boundary, have been established for lozenge tilings beginning with \cite{OR01, FS03}. See the book~\cite{Gor20} for more information and references.

It is expected that the six-vertex model with domain wall boundary conditions shares similar asymptotic behaviors as random domino tilings of the Aztec diamond as well as other random tiling models.  Indeed, when $\Delta=0$, the free-fermion point, the six vertex model with domain wall boundary conditions is equivalent to uniformly random domino tilings of the Aztec diamond {\cite{EKLP92,FS06}}.
 Away from the free-fermion point, the situation is much less well understood unless at the so-called stochastic point where symmetric functions play an important role; see for example \cite{BCG16, OP17, Bor18, RS18, Agg20b, IMS20, Dim20b} for results on the stochastic six vertex model.
Nevertheless, for six vertex models with domain wall boundary conditions, a limit shape has been predicted using either the emptiness formulation probability  \cite{CP10, CPS16} or the tangent method \cite{CPS16b}.  
{From these predictions, it is expected that the six vertex model with domain wall boundary conditions should contain the equivalent of the frozen and rough regions when $|\Delta| < 1$. Simulations suggest that the six vertex model with domain wall boundary conditions contain three types of macroscopic regions when $\Delta <-1$, which is similar to the two-periodic Aztec diamond considered in \cite{CJ16, DK21}.}

Configurations of this model are in natural bijection with 
important combinatorial objects called \emph{alternating sign matrices} (ASMs).
ASMs are $n \times n$ matrices with entries in $\{0, \pm 1\}$ whose rows and columns sum to $1$ and whose nonzero entries alternate in sign along every row and column. From the point of view of the six-vertex model, studying ASMs corresponds to setting $a = b = c$ and hence $\Delta = 1/2$.
There has been some recent mathematical progress in the study of large random ASMs. 
Aggarwal, in \cite{Agg:20} has given a rigorous proof of the tangent method thus confirming Colomo and Pronko's prediction.  Gorin established the first fluctuation type result for ASMs by showing that the GUE corner process is the limit at the tangency points \cite{Gor14}, which has been generalized to other six vertex models \cite{Dim20}. The analogous result for Aztec diamonds was proved in \cite{JN06}.
Universality of the GUE corner process has very recently been established for uniformly random lozenge tilings at the tangency points \cite{AG21}.

In this paper, we consider a different type of limit for alternating sign matrices where the discrete process is no longer visible by moving away from the tangency points.   We introduce a directed path picture for the alternating sign matrices and show that the fluctuations of the maximum of the top path, which separates the ordered and disordered regions, converges to the GOE Tracy--Widom  distribution after suitable centering and rescaling.  This gives strong evidence that the top path should converge to the Airy-2 process after suitable centering and rescaling. The reason for this is the fact that the distribution of the maximum of an Airy process minus a parabola has the GOE Tracy--Widom distribution, see \cite{Jo03b,CQR11}.
To our knowledge this is the first edge fluctuation result away from the tangency points in a domain-wall six-vertex model when we are not in the free fermion case.

Alternating sign matrices arose naturally in the computation of the $\lambda$-determinant \cite{RR:86}, a generalization of Dodgson's condensation method for computing determinants.
Mills--Robbins--Rumsey~\cite{MRR83} conjectured that the number of ASMs of size $n$ is given by the explicit product formula,
\begin{equation}
\label{asm-formula}
\prod_{i=0}^{n-1} \frac{(3i+1)!}{(n+i)!}.
\end{equation}
Surprisingly, this formula was already known in the literature as the number of \tsscpp s (TSSCPPs) inside a $2n \times 2n \times 2n$ box, proved earlier by Andrews~\cite{Andrews-1994}.
This became known as the \emph{alternating sign matrix (ASM) conjecture} and was eventually settled first by Zeilberger~\cite{Zei96a} and then by Kuperberg~\cite{Kup96} using very different methods.
In the same paper, Mills--Robbins--Rumsey conjectured something more general.
They first realized both ASMs and TSSCPPs equivalently in terms of triangular arrays of size $n$, which Zeilberger called gog and magog triangles respectively. Then the ASM conjecture was equivalently a statement about these two families of triangular arrays being enumerated by \eqref{asm-formula}. See \eqref{eg:gogs2} and \eqref{eg-magogs2} for examples of gog and magog triangles of size $3$. Although the rules defining these are very similar, no bijective proof between these two objects has been found to this day. This is despite the proof that these families have the same enumeration under two refined statistics~\cite{fonseca-zinn-2008} and an explicit partial bijection between `large' subsets~\cite{AyyCorGou11}.

Mills--Robbins--Rumsey then generalized these triangular arrays to trapezoidal arrays for both families.
Zeilberger~\cite{Zei96a} proved the ASM conjecture by showing that these two families of trapezoidal arrays, called gog and magog trapezoids, are equinumerous, using the method of constant-term identities. A simplified version of his original proof is given in ~\cite{fischer-2016}.
We note that Krattenthaler defined more general families of gog and magog trapezoids and conjectured that they are equinumerous~\cite{krattenthaler-1996,krattenthaler-2016}. As of this writing, his conjecture is still open. Fischer~\cite{fischer-2018} has made some progress in proving Krattenthaler's conjecture.

Our results rely exclusively on Zeilberger's proof \cite{Zei96a} for the number of gog and magog trapezoids being equal in cardinality.  
Although Kuperberg's proof exploited the link between ASMs and the six-vertex model with domain wall boundary conditions, his proof does not generalize in any easy way to the result we need.
We then study the fluctuations at the `free' boundary of the \tsscpp s using formulas from \cite{AC:20}. These fluctuations turn out to have GOE Tracy--Widom distribution after suitable centering and rescaling.
{We remark that the idea of expressing an observable for a non-free-fermionic model in terms of some other observable of a related free-fermionic model has appeared before in other contexts~\cite{Bor18,IMS20}.}

In \cref{sec:Formulation}, we formulate our main result and provide an overview of the rest of the paper.  

\subsection*{Acknowledgements} 
{We thank the referees for their careful reading and useful comments.}
AA and SC acknowledge support from the Royal Society grant  IES\textbackslash R1\textbackslash 191139.
AA was partially supported by the UGC Centre for Advanced Studies and by Department of Science and Technology grant EMR/2016/006624. 
SC was supported by EPSRC EP\textbackslash T004290\textbackslash 1. 
KJ acknowledges support from the grant KAW
2015.0270 of the Knut and Alice Wallenberg Foundation.

\section{Formulations and the main result} \label{sec:Formulation}

To state our main result precisely, we first introduce {a directed path variant of alternating sign matrices which we call path corner sum matrices in \cref{subsec:directed}. We then define the limiting object, the GOE Tracy--Widom distribution in \cref{subsec:TWGOE}.  
Finally, we state our main result in \cref{subsec:main}.}  

\subsection{Directed Paths in the ASMs}
\label{subsec:directed}

\begin{defn}
An \emph{alternating sign matrix (ASM)} of {order} $n$ is an $n \times n$ matrix with entries in $\{0,1,-1\}$ such that
\begin{itemize}
\item the sum of the entries in each row and column equals $1$,
\item non-zero entries in each row and column alternate in sign.
\end{itemize}
\end{defn}

We denote $\mathcal{A}_n$ to be all the alternating sign matrices of order $n$. As an example, the seven alternating sign matrices of order $3$ are
\begin{equation}
\label{eg-asm3}
\begin{array}{ccccccc}
\hspace*{-0.3cm}\left( \begin{array}{rrr}
 1 &  0 & 0 \\
 0 & 1 &  0 \\
0 &  0 &  1 
\end{array} \right) &&
\hspace*{-0.9cm}\left( \begin{array}{rrr}
 1 &  0 & 0 \\
0 &  0 &  1 \\
 0 & 1 &  0 
\end{array} \right) &&
\hspace*{-0.9cm}\left( \begin{array}{rrr}
 0 & 1 &  0 \\
 1 &  0 & 0 \\
0 &  0 &  1 
\end{array} \right) &&
\hspace*{-0.9cm}\left( \begin{array}{rrr}
 0 & 1 &  0 \\
1 & -1 & 1 \\
 0 & 1 &  0 
\end{array} \right)
\\
&
\hspace*{-0.9cm}\left( \begin{array}{rrr}
 0 & 1 &  0 \\
0 &  0 &  1 \\
 1 &  0 & 0 
\end{array} \right) &&
\hspace*{-0.9cm}\left( \begin{array}{rrr}
0 &  0 &  1 \\
 1 &  0 & 0 \\
 0 & 1 &  0 
\end{array} \right) &&
\hspace*{-0.9cm}\left( \begin{array}{rrr}
0 &  0 &  1 \\
 0 & 1 &  0 \\
 1 &  0 & 0 
\end{array} \right).
\end{array}
\end{equation}

\begin{defn}
	A \emph{path corner sum matrix (PCSM)} of size $n$ is an $n \times n$ matrix $C=(c_{i,j})_{1 \leq i,j \leq n}$  with
\begin{itemize}
	\item $  c_{1,i} \in \{n-1, n\}$ and $ c_{i,n} \in \{n-1, n\}$ for $1 \leq i \leq n $
	\item { {$c_{i,j+1}-c_{i,j} \in \{0,1\}$ for $1\leq i\leq n$, $1 \leq j \leq n-1$, and  
 $c_{i,j}-c_{i+1,j} \in \{0,1\}$} for $1 \leq i\leq n-1$ and $1 \leq j \leq n$}.
\end{itemize}
\end{defn}
We denote the set of these matrices of size $n$ by $\mathcal{C}_{n}$ and call them \emph{path corner sum matrices}. These matrices are closely related to the corner sum matrices introduced in~\cite[Lemma 1]{RR:86}; see also \cite{propp-2001}. 
The shift of $n+1 \mapsto n$ is natural from our point of view, and it also fits in with our conventions for \tsscpp s introduced in the next section.

\begin{prop}
\label{prop:pcsm-bij}
PCSMs of size $n$ are in natural bijection with ASMs of size $n+1$
{via an affine transformation}.
\end{prop}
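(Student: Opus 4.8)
The plan is to realize the bijection as the composition of the classical correspondence between alternating sign matrices and corner sum matrices with an elementary affine map, namely a reflection in one coordinate followed by the entrywise operation $x\mapsto n-x$. Recall that for an ASM $A=(a_{i,j})_{1\le i,j\le n+1}$ of size $n+1$, its \emph{corner sum matrix} $M=(m_{i,j})_{0\le i,j\le n+1}$ is given by $m_{i,j}=\sum_{i'\le i,\,j'\le j}a_{i',j'}$, and by \cite[Lemma 1]{RR:86} (see also \cite{propp-2001}) the map $A\mapsto M$ is a bijection from $\mathcal{A}_{n+1}$ onto the set of integer matrices indexed by $0\le i,j\le n+1$ satisfying $m_{0,j}=m_{i,0}=0$, $m_{n+1,j}=j$, $m_{i,n+1}=i$, and $m_{i,j+1}-m_{i,j}\in\{0,1\}$, $m_{i+1,j}-m_{i,j}\in\{0,1\}$ throughout; the inverse recovers $A$ from the mixed second differences $a_{i,j}=m_{i,j}-m_{i-1,j}-m_{i,j-1}+m_{i-1,j-1}$.

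Given this, I would define $\Phi\colon\mathcal{A}_{n+1}\to\mathcal{C}_n$ by
\[
c_{i,j}\;:=\;n-m_{i,\,n+1-j}\;=\;n-\sum_{i'\le i,\ j'\le n+1-j}a_{i',j'}\qquad(1\le i,j\le n),
\]
which is manifestly an affine function of the entries of $A$. Verifying $\Phi(A)\in\mathcal{C}_n$ is a direct substitution: the increments of $C$ are $c_{i,j+1}-c_{i,j}=m_{i,n+1-j}-m_{i,n-j}\in\{0,1\}$ and $c_{i,j}-c_{i+1,j}=m_{i+1,n+1-j}-m_{i,n+1-j}\in\{0,1\}$, i.e.\ exactly increments of $M$, while $c_{1,j}=n-m_{1,n+1-j}$ and $c_{i,n}=n-m_{i,1}$ lie in $\{n-1,n\}$ because $m_{1,k}$ and $m_{i,1}$ are partial sums along a single (alternating, sum $1$) row or column of $A$ and so lie in $\{0,1\}$. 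Using instead the boundary values $m_{n+1,k}=k$ and $m_{i,n+1}=i$, the same computation gives $c_{n,j}\in\{j-1,j\}$ and $c_{i,1}\in\{n-i,n-i+1\}$ along the remaining two sides; these relations cut out the image of $\Phi$.

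To invert $\Phi$ I would run the construction backwards: from $C\in\mathcal{C}_n$ put $m_{i,k}:=n-c_{i,\,n+1-k}$ for $1\le i,k\le n$ and extend by $m_{0,\cdot}=m_{\cdot,0}=0$, $m_{n+1,k}=k$, $m_{i,n+1}=i$, then check that the increments of this extended array all lie in $\{0,1\}$; in the interior this is the monotonicity built into $C$, and across the four ``seams'' between the reflected $n\times n$ block and the prescribed boundary rows and columns it is precisely where the boundary relations of a PCSM are used. The resulting $M$ is then a corner sum matrix and so arises from a unique $A\in\mathcal{A}_{n+1}$, and this inverts $\Phi$ by construction. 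The argument is essentially bookkeeping, with no real difficulty; the one place to be careful — and hence the ``hard'' part — is lining up the orientation of the reflection with the directions of the two monotonicity conditions so that signs and boundary ranges match, which I would double-check on a small case.
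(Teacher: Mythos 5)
Your map is exactly the paper's: $c_{i,j}=n-\sum_{r\le i,\ s\le n+1-j}a_{r,s}$, together with the same inverse in terms of mixed second differences with the same boundary conventions (your $m_{i,k}=n-c_{i,n+1-k}$ extended by $m_{0,\cdot}=m_{\cdot,0}=0$, $m_{n+1,k}=k$, $m_{i,n+1}=i$ is precisely the paper's $c_{i,n+1}=c_{0,i}=n$, $c_{i,0}=c_{n+1,n+1-i}=n-i$). The paper simply states the formulas and calls the verification routine, whereas you route through the classical Robbins--Rumsey corner-sum matrix, which makes the ``affine transformation'' and the well-definedness of the inverse more transparent; substantively the argument is the same.

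One point in your favor worth flagging: you correctly observe that the image is characterized by the further boundary relations $c_{n,j}\in\{j-1,j\}$ and $c_{i,1}\in\{n-i,n-i+1\}$ on the bottom row and left column, and that these are what make the inverse land in $\mathcal{A}_{n+1}$. The paper's stated definition of $\mathcal{C}_n$ only constrains the top row and right column together with the increment conditions; as written this admits extra matrices (e.g.\ the constant matrix with all entries $n$ when $n=2$, which does not arise from any ASM of size $3$). So either those two extra boundary conditions must be read into the definition of a PCSM, or they must be verified as part of showing the inverse is well defined --- you do the latter explicitly, which is the more careful route and repairs a small gap that the paper's ``routine to check'' elides.
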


\begin{proof}
Given $A = (a_{i,j})_{1 \leq i,j \leq n+1} \in \mathcal{A}_{n+1}$, construct a matrix $C = (c_{i,j})_{1 \leq i,j \leq n}$ by
\begin{equation}
{{c}_{i,j}=n-\sum_{\substack{ 1 \leq r \leq i \\ 1 \leq s \leq n+1-j}} a_{r,s}}, \quad 1 \leq i,j \leq n.
\end{equation}
It is routine to check that $C \in \mathcal{C}_{n}$.  
It is also easy to verify that the inverse map is given by
\begin{equation}
a_{i,j} = c_{i-1,n+1-j} - c_{i,n+1-j} - c_{i-1,n+2-j} + c_{i,n+2-j},
\quad 1 \leq i,j \leq n+1,
\end{equation}
where we assume $c_{i,n+1} {= c_{0,i}} = n$ 
{and $c_{i,0} = c_{n+1,n+1-i} = n-i$
for $0 \leq i \leq n+1$; see \cref{fig:G_3}}.
\end{proof}

The seven path corner sum matrices of size $2$, $\mathcal{C}_2$, are given in the same order as the corresponding ASMs in \eqref{eg-asm3} by
\begin{equation}
\label{eg-pcsm3}
\begin{array}{ccccccc}
\left( \begin{array}{rrr}
 1 &  1  \\
 0 & 1 
\end{array} \right) &&
\hspace*{-0.9cm}\left( \begin{array}{rrr}
 1 &  1  \\
1 &  1 
\end{array} \right) &&
\hspace*{-0.9cm}\left( \begin{array}{rrr}
 1 & 2 \\
 0 &  1
\end{array} \right) &&
\hspace*{-0.9cm}\left( \begin{array}{rrr}
 1 & 2  \\
1 & 1 
\end{array} \right)
\\
&
\hspace*{-0.9cm}\left( \begin{array}{rrr}
1 &  2 \\
 1 &  2
\end{array} \right) &&
\hspace*{-0.9cm}\left( \begin{array}{rrr}
 2 & 2 \\
1 &  1 
\end{array} \right) &&
\hspace*{-0.9cm}\left( \begin{array}{rrr}
2 &  2  \\
 1 & 2 
\end{array} \right).
\end{array}
\end{equation}

We think of the matrix entries of the PCSMs as heights.  
{For each PCSM, we consider a family of directed paths starting from the leftmost column and ending on the bottom row using north-east and south-east paths. The paths separate entries which differ by one. See \cref{fig:size10path} for an example. 
 In our description, the top-rightmost path in this family  gives a full description of the boundary of the disordered region in the top-left quadrant of ASMs.  We are interested in precisely this path. This path is not easily seen from an ASM configuration and it can be thought of as an analogue of the DR-paths used to investigate the asymptotics of uniformly random domino tilings of the Aztec diamond~\cite{Jo03}.
}

For ease of notation, let $[i]_2=i \mod 2$ for an integer $i$.
To describe these directed paths precisely, we introduce the graph, $G_n^\mathbf{g}$, whose vertex set is given by
\begin{equation}
V_n^{\mathbf{g}} = V_n^{\mathbf{g},1} \cup V_n^{\mathbf{g},2},
\end{equation}
where 
\begin{equation}
V_n^{\mathbf{g},1}= \left\{(i,j) \left| \substack{\ds [i]_2=1,[j]_2=0, -1 \leq i \leq 2n-1, \\ \ds 0 \leq j \leq 2n, (i,j) \not =(-1,0)}  \right. \right\},
\end{equation}
and
\begin{equation}
V_n^{\mathbf{g},2} = \left\{(i,j) \left| \substack{\ds [i]_2=0,[j]_2=1, 0 \leq i \leq 2n, \\ \ds 1 \leq j \leq 2n+1, (i,j) \not =(2n,2n+1) } \right. \right\}.
\end{equation}
The edge set of $G_n^{\mathbf{g}}$ is described as follows:
\begin{itemize}
\item for each $v \in V_n^{\mathbf{g},1}\backslash \{(2n-1,2n)\}$, there is an edge $(v,v+(1,1))$, 
\item for each $v \in V_n^{\mathbf{g},1}\backslash \{(-1,2j):1\leq j \leq n\}$ there is an edge $(v,v+(-1,1))$,
\item for each $v \in V_n^{\mathbf{g},1}\backslash \{(2i-1,0):1\leq i \leq n \}$ there is an edge $(v,v+(1,-1))$.
\end{itemize}
\cref{fig:G_3}(a) shows $G_3^{\mathbf{g}}$.

\begin{center} 
\begin{figure}
\begin{tabular}{c c}
\includegraphics[height=3cm]{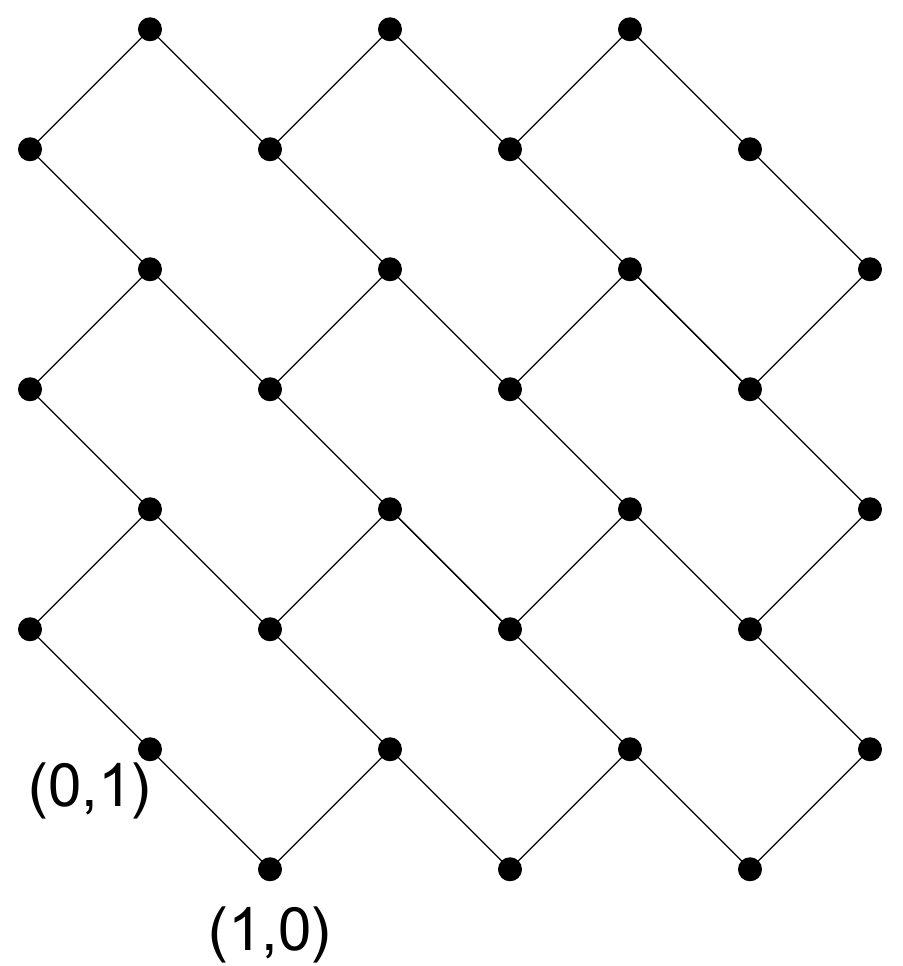}
&
\hspace*{1cm} \includegraphics[height=3cm]{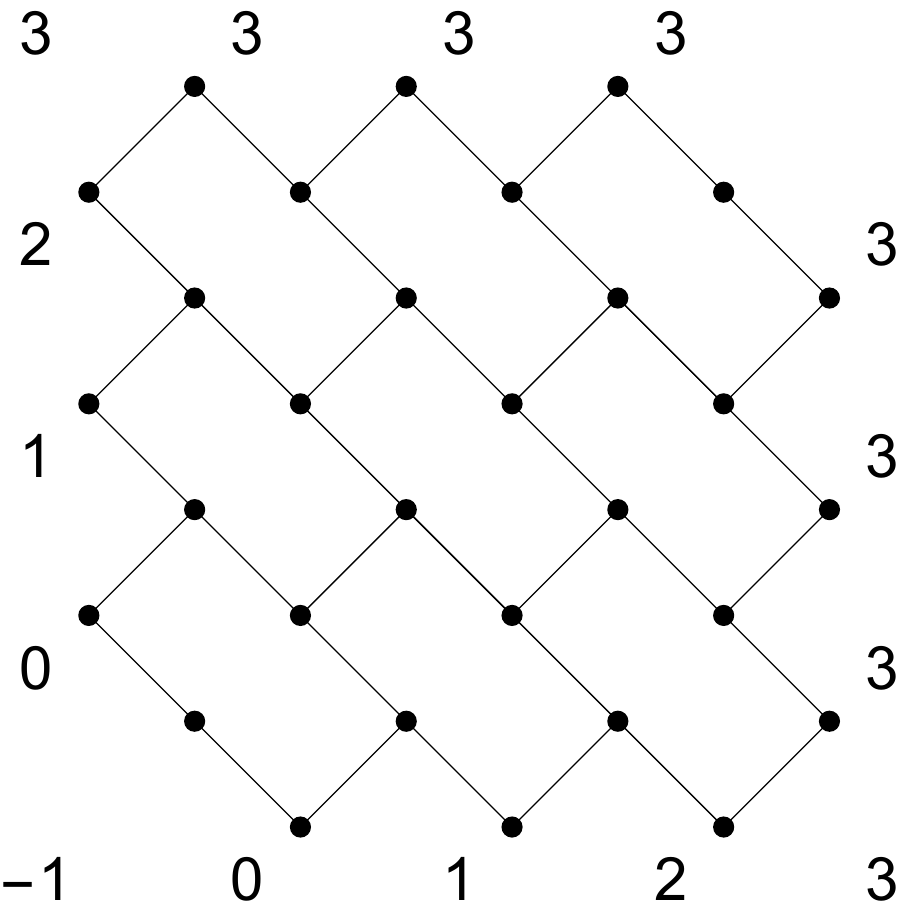} \\
{(a)} & (b)
\end{tabular}
\caption{The left figure shows $G_3^{\mathbf{g}}$ the Cartesian coordinates. The right figure shows the external heights on external faces sharing an edge with $G_3^{\mathbf{g}}$.}
\label{fig:G_3}
\end{figure}
\end{center}

 The directed paths travel on the edges of this graph in the following way:   paths can travel in the direction $(1,-1)$ on edges parallel to $(1,-1)$ and can travel in either direction on edges parallel to $(1,1)$.  The paths start at the vertices $\{(-1,{2i}): 1 \leq i \leq n \}$ and terminate at the vertices $\{(2i-1,0):1 \leq i \leq n \}$. Paths cannot meet at vertices.  
We superimpose the entries of $\mathcal{C}_n$ onto the faces of $G_n^{\mathbf{g}}$ so that for $({c}_{i,j})_{1 \leq i,j \leq n} \in \mathcal{C}_n$, ${c}_{i,j}$ is assigned to the face whose center is $(  2j-\frac{3}{2},2n-2i+\frac{3}{2})$.  These superimposed numbers represent the height of that face. The heights of the faces external to $G_n^{\mathbf{g}}$ but sharing an edge {with it} are given by
\begin{itemize}
\item height $i-1$ for the faces {centered at} $ (2i-\frac{3}{2},-\frac{1}{2})$ with $1\leq i \leq n+1$,
\item height ${j-1}$ for the faces {centered at} $ (-\frac{3}{2},2j-\frac{1}{2})$ with $1 \leq j \leq n+1$, and
\item height $n$ otherwise.
\end{itemize}
{The top directed path is the path starting from $(-1,2n)$ and ending at $(2n-1,0)$. }

\cref{fig:G_3}(b) shows heights on the external bordering faces to $G_3^{\mathbf{g}}$.  
The level lines of the height function on $G_n^{\mathbf{g}}$, i.e. the height differences between faces, represent the directed paths on $G_n^{\mathbf{g}}$.  The directed paths for $\mathcal{C}_2$ are shown in \cref{fig:size2paths} while the directed paths for a PCSM configuration of size 9 is given in \cref{fig:size10path}. 

\begin{center}
\begin{figure}[htbp!]
\includegraphics[height=6cm]{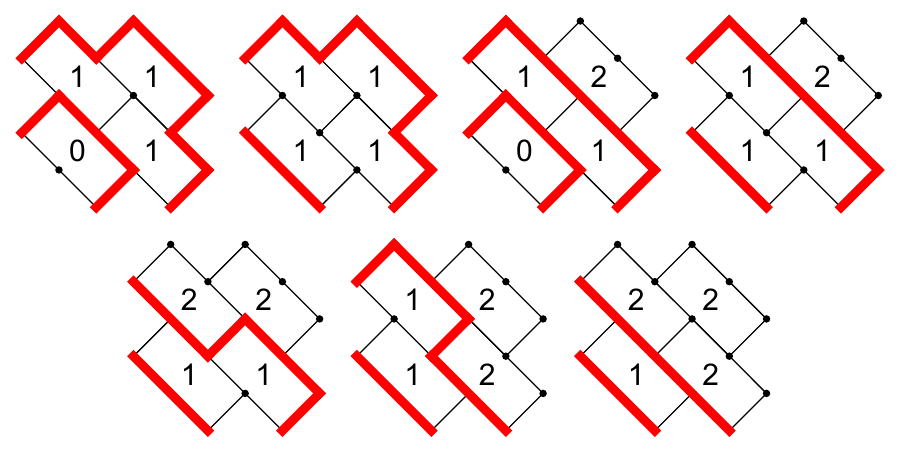}
\caption{The directed paths for $\mathcal{C}_2$ with the same ordering as used for depicting $\mathcal{A}_3$, $\mathcal{C}_3$ and $\mathcal{C}_2$. Paths are oriented from the left side.  }
\label{fig:size2paths}
\end{figure}
\end{center}

{
We define an alternate coordinate system for vertices in $V_n^{\mathbf{g}}$. If $(x,y)$ is a vertex, let $t=(x-y+1)/2$ and $h=((y-2n)+(x+1))/2-1$. Thus, every vertex can be expressed using $(t,h)$ coordinates.
We will be interested in the {vertices of the} top {directed} path which starts at $(-1,2n)$ and ends at $(2n-1,0)$. Note that the vertices $(x,y)$ and $(x-1,y-1)$ have the same $t$-coordinate, so if the directed path passes through both of these vertices, we omit the $t$-coordinate for the $(x-1,y-1)$ vertex and retain the $t$-coordinate for $(x,y)$.  Then, reading from top-left to bottom right, the $t$-coordinates for such vertices are $(-n,\dots,-1,0,1,\dots,n)$. We then define the random vector $T_n$ as
the $h$-coordinates for the top directed path indexed by the $t$-coordinates,
\begin{equation}\label{eq:TnAiry}
T_n = (T_n(-n),\dots,T_n(-1),T_n(0),T_n(1),\dots,T_n(n)).
\end{equation}
{For the example in \cref{fig:size10path},
\[
T_9 = (0,1,2,3,4,4,4,4,4,5,5,5,4,4,4,3,2,1,0).
\]
 }
Our main result, \cref{thm:mainthm}, is a statement about $\max (T_n)$ and our conjecture, \cref{conj:GUE}, is a statement about $T_n(0)$ {as well as the rest of the path}.
}

\begin{center}
\begin{figure}[htbp!]
\begin{tabular}{c c}
\includegraphics[height=5cm]{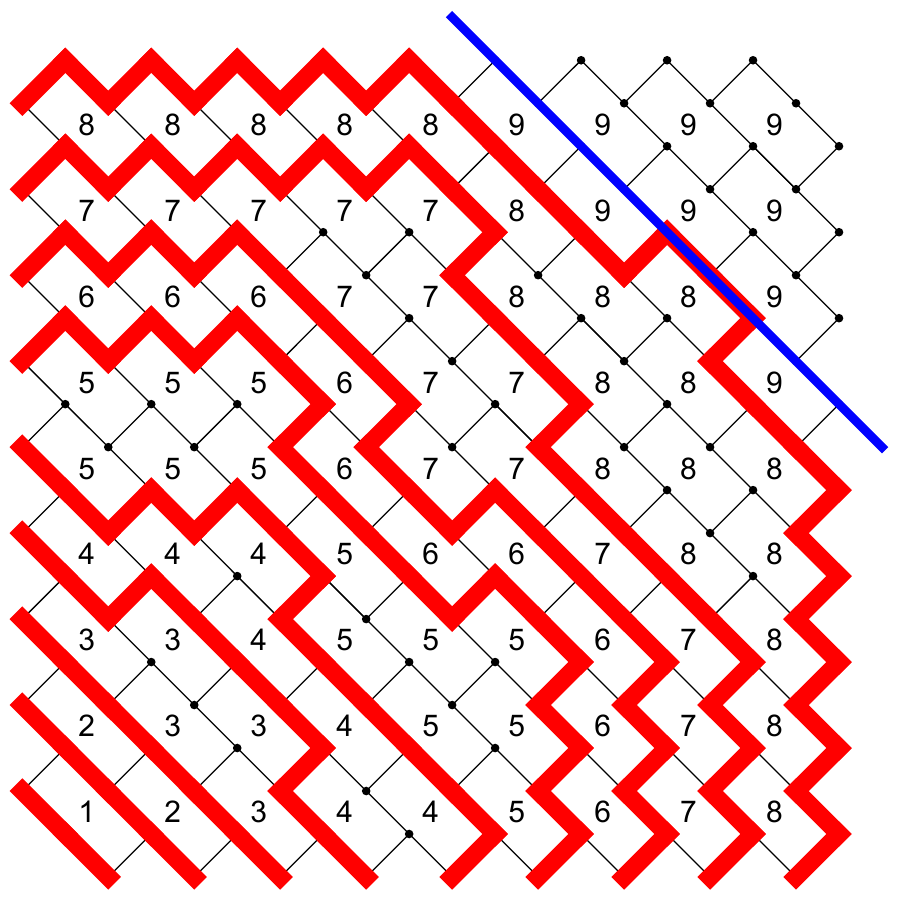} 
    &
\includegraphics[height=4.5cm]{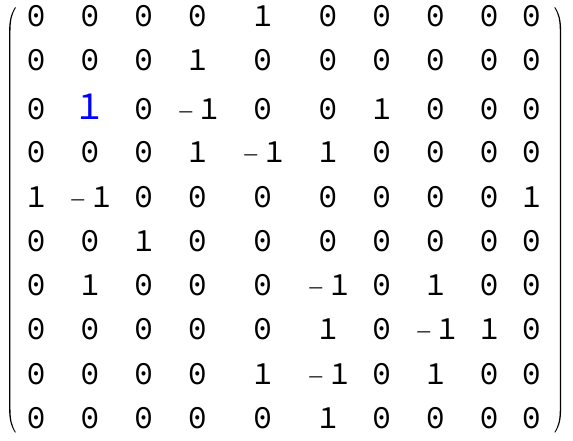} \\
(a) & (b)
\end{tabular}
\caption{{(a) A path corner sum matrix of size $9$ along with its directed path representation.  Paths are oriented from the left side. Here, we have $\max T_9= 5$ and is overlaid in blue and bold. (b) The corresponding ASM is on the right. The $1$ closest to the top-left corner is marked in blue and corresponds to the intersection of the blue diagonal and the top path in (a).}}
\label{fig:size10path}
\end{figure}
\end{center}

\subsection{GOE Tracy--Widom distribution}
\label{subsec:TWGOE}

We define the GOE Tracy--Wid\-om distribution~\cite{TW96} 
through a Fredholm Pfaffian~\cite{Ra00,Fer04,OQR:17,BBCS:18,BBNV18}.

The Pfaffian of an {even} anti-symmetric matrix $A=(a_{i,j})_{1 \leq i,j \leq 2k}$ is given by
\begin{equation}
\mathrm{Pf}(A)= \frac{1}{2^k k!} \sum_{\sigma \in \mathcal{S}_{2k}} \mathrm{sgn}(\sigma) a_{\sigma(1),\sigma(2)} \dots a_{\sigma(2k-1),\sigma(2k)},
\end{equation}
where $\mathcal{S}_{2k}$ is the set of permutations of $\{1,\dots, 2k\}$.  Let $\mathrm{Ai}(x)$ denote the Airy function, that is,
\begin{equation}
\mathrm{Ai}(x)=\frac{1}{2 \pi \mathrm{i}} \int_{\Gamma_{\mathrm{Ai}}} \text{d}z \; e^{\frac{z^3}{3}-x z},
\end{equation}
where $\Gamma_{\mathrm{Ai}}$ is the union of two semi-infinite rays starting from the origin in directions $e^{\frac{\mathrm{i}\pi}{3}}$ and $e^{-\frac{\mathrm{i}\pi}{3}}$, with the contour being oriented from $\infty e^{-\frac{\mathrm{i}\pi}{3}}$ to $\infty e^{\frac{\mathrm{i}\pi}{3}}$ with $\mathrm{i}=\sqrt{-1}$. 

Introduce the following 2 by 2 block kernel
\begin{equation}
\mathbf{K}_{\mathrm{GOE}}(x,y) = \left( \begin{array}{cc}
	K_{\mathrm{GOE}}^{11}(x,y) & K_{\mathrm{GOE}}^{12}(x,y) \\
	K_{\mathrm{GOE}}^{21}(x,y) & K_{\mathrm{GOE}}^{22}(x,y) \end{array}\right)
\end{equation}
where
\begin{equation}\label{eq:KGOE11}
K_{\mathrm{GOE}}^{11}(x,y)= \frac{1}{4}\int_0^\infty \text{d} \lambda \;
\left( \mathrm{Ai}(x+\lambda)\mathrm{Ai}'(y+\lambda)- \mathrm{Ai}'(x+\lambda)\mathrm{Ai}(y+\lambda) \right),
\end{equation}
\begin{equation}\label{eq:KGOE12}
K_{\mathrm{GOE}}^{12}(x,y)= \int_0^\infty \text{d} \lambda \; \mathrm{Ai}(x+\lambda)\mathrm{Ai}(y+\lambda)+\frac{1}{2} \mathrm{Ai}(x) \int_0^\infty \text{d} \lambda \; \mathrm{Ai}(y-\lambda) 
\end{equation}
\begin{equation}
K_{\mathrm{GOE}}^{21}(x,y)=-K_{\mathrm{GOE}}^{12}(y,x),
\end{equation}
\begin{equation}
\begin{split}\label{eq:KGOE22}
K_{\mathrm{GOE}}^{22}(x,y) =& \int_0^\infty \text{d} \lambda \int_\lambda^\infty \text{d} \mu \; \mathrm{Ai}(x+\lambda)\mathrm{Ai}(y+\mu)-\mathrm{Ai}(x+\mu)\mathrm{Ai}(y+\lambda)
\\&-\int_0^\infty \text{d} \mu \; \mathrm{Ai}(x+\mu)+ \int_0^\infty \text{d} \mu \; \mathrm{Ai}(y+\mu)-\mathrm{sgn}(x-y),
\end{split}
\end{equation}
where $\mathrm{sgn}$ {is the standard signum function}.

The GOE Tracy--Widom distribution is defined through a Fredholm Pfaffian by
\begin{equation}
\label{eq:GOETW}
\begin{split}
F_1(s) =& \mathrm{Pf}(\mathbbm{J}-\mathbf{K}_{\mathrm{GOE}})_{L^2(s,\infty)} \\ 
=& 1+\sum_{k=1}^{\infty} \frac{(-1)^k}{k!} \int_s^{\infty} \text{d}x_1 \dots \int_s^\infty \text{d}x_k \; \mathrm{Pf} (\mathbf{K}_{\mathrm{GOE}}(x_i,x_j))_{1 \leq i,j\leq k},
\end{split}
\end{equation}
where
\begin{equation}
\mathbbm{J}(x,y) = \left( \begin{array}{cc}
0 & 1 \\
-1 & 0 \end{array} \right) \mathbbm{I}_{x=y}.
\end{equation}
We mention that $F_1(s)$ is well-defined since the series converges and that the GOE Tracy--Widom distribution correlation kernel is not unique. Ours differs from the usual one in having a factor $1/4$ in $K_{\mathrm{GOE}}^{11}$ instead of in $K_{\mathrm{GOE}}^{22}$; our formulation is similar to the
ones used in~\cite[Equations (2.9) and (6.17)]{Fer04} and~\cite{BBCS:18}.   Since the term $K_{\mathrm{GOE}}^{11}$ is always paired to a $K_{\mathrm{GOE}}^{22}$ in the expansion of the Pfaffian, our representation above is equivalent to the usual one.

\subsection{Statement of the main result}
\label{subsec:main}

We are now ready to give our main result.  Let $Z_n$ denote $|\mathcal{C}_n|$, that is the number of path corner sum matrices of size $n$. Introduce the following probability measure\footnote{the superscript $\mathbf{g}$ refers to gogs defined later; see \cref{rem:notation} for an explanation} for ${C}\in \mathcal{C}_n$
$$
\mathbb{P}^{\mathbf{g}}_n[{C} ]= \frac{1}{Z_n}.
$$
We only consider the behavior in the top right quadrant of $G_n^{\mathbf{g}}$, that is the behavior in the region {$[n,2n] \times [n,2n]$}. The behavior in the other four quadrants is similar.  
Above the top path in $G_n^{\mathbf{g}}$ described above, the configuration is frozen.  These correspond to entries equal to $n$ in PCSMs and a connected region of zeros in the ASMs which is also connected to the boundary (that is the first row and column of the ASMs).  
 Rescaling the  $G_n^{\mathbf{g}}$ by $n \times n$ so that it fits into {$[0,2] \times [0,2]$}, the equation of the top path converges to 
$$
x(2-x)+y(2-y)+(2-x)(2-y)=1
$$
for $x,y \in[1,2]$.   As mentioned in \cref{sec:Introduction}, this limiting curve, {under an affine transformation}, was
 predicted in~\cite{CP10, CPS16b} and proved in \cite{Agg:20}.

Introduce the constants 
\begin{equation} \label{eq:scalings}
\alpha=2-\sqrt{3}, \hspace{5mm} c_1=\frac{2}{3^{4/3}},  \mbox{ and }c_0=\frac{1}{3\sqrt{3}c_1}=\frac{1}{2 \cdot 3^{1/6}},
\end{equation}
which will be used throughout the paper. Our main theorem is the following.

\begin{thm}
\label{thm:mainthm}
We have that
\begin{equation}
\lim_{n\to \infty} \mathbb{P}^{\mathbf{g}}_n\left[  \frac{\max (T_n)-(1-\alpha)n}{c_0 n^{\frac{1}{3}}}\leq s\right]= F_1(s),
\end{equation}
where $F_1$ is defined in~\eqref{eq:GOETW}. 
\end{thm}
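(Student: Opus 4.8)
\emph{Overview of the strategy.} The plan is to transport the event $\{\max(T_n)\le m\}$, which lives on the ASM / six-vertex side, to a gap event for a Pfaffian point process attached to totally symmetric self-complementary plane partitions, and then to run a steepest descent analysis on the corresponding Fredholm Pfaffian. The first move is combinatorial. Using the corner-sum / monotone-triangle encoding underlying \cref{prop:pcsm-bij}, I would show that requiring $\max(T_n)\le m$ is equivalent to declaring an explicit staircase-shaped corner block of the PCSM to be frozen (all entries equal to $n$): above the top path the configuration is already deterministic, and bounding its height pins down exactly such a block. Deleting this frozen block leaves precisely a \emph{gog trapezoid} in the sense of Mills--Robbins--Rumsey and Zeilberger, with $n+1$ rows (the ASM size) and a number of columns that is an affine function of $n-m$. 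Hence $Z_n\,\mathbb{P}^{\mathbf{g}}_n[\max(T_n)\le m]$ equals the number of gog trapezoids of that shape.

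\emph{Zeilberger and the Pfaffian point process.} By Zeilberger's theorem~\cite{Zei96a}, gog and magog trapezoids of the same shape are equinumerous, so the count from the previous step is turned into a count of magog trapezoids, i.e.\ truncations of TSSCPPs of the relevant size; since $Z_n$ is itself given by~\eqref{asm-formula} and hence counts the full family of such TSSCPPs, dividing through yields
\begin{equation}\label{eq:plan-reduction}
\mathbb{P}^{\mathbf{g}}_n[\max(T_n)\le m]\;=\;\mathbb{P}\big[\text{the free boundary of a uniform TSSCPP of the appropriate size stays below a prescribed staircase}\big].
\end{equation}
A uniform TSSCPP, restricted to a fundamental domain of its symmetry, is a non-intersecting lattice path (equivalently lozenge tiling) ensemble whose free boundary forms a Pfaffian point process, with an explicit correlation kernel computed in~\cite{AC:20}. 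The right-hand side of~\eqref{eq:plan-reduction} is a gap/extremal-particle event for this process, so $\mathbb{P}^{\mathbf{g}}_n[\max(T_n)\le m]$ is, for each finite $n$, a Fredholm Pfaffian of the \cite{AC:20} kernel over a discrete half-line; this is the source of the Pfaffian structure (and hence of GOE rather than GUE) in the final answer.

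\emph{Asymptotics.} Setting $m=(1-\alpha)n+c_0n^{\frac13}s$, I would write the finite-$n$ kernel as a double contour integral and locate the relevant coalescing saddle: it sits at the point where the limiting arctic curve $x(2-x)+y(2-y)+(2-x)(2-y)=1$ meets the diagonal $x=y=\sqrt3$, and this is precisely what forces the constants $\alpha=2-\sqrt3$, $c_1=2/3^{4/3}$, $c_0=1/(3\sqrt3\,c_1)$ in~\eqref{eq:scalings}. Deforming the contours to steepest descent paths through this double critical point and rescaling the particle variable by $c_0n^{\frac13}$, each of the four blocks of the kernel converges, after a suitable conjugation, to the corresponding block of $\mathbf{K}_{\mathrm{GOE}}$ in~\eqref{eq:KGOE11}--\eqref{eq:KGOE22}; together with Gaussian-type decay bounds away from the edge this upgrades to convergence of the Fredholm Pfaffian series~\eqref{eq:GOETW}, giving the limit $F_1(s)$.

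\emph{Main obstacle.} The hard part will be the asymptotic analysis: obtaining the saddle point asymptotics of the entire $2\times2$ matrix kernel with enough uniformity to pass to the limit inside the Fredholm Pfaffian expansion, and in particular to reproduce the non-bulk pieces of $\mathbf{K}_{\mathrm{GOE}}$ --- the rank-one term $\tfrac12\mathrm{Ai}(x)\int_0^\infty\mathrm{Ai}(y-\lambda)\,\mathrm{d}\lambda$ in $K_{\mathrm{GOE}}^{12}$ of~\eqref{eq:KGOE12} and the $-\mathrm{sgn}(x-y)$ in $K_{\mathrm{GOE}}^{22}$, which typically arise from boundary terms in the contour deformation rather than from the main contribution of the integral. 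A secondary, more bookkeeping-type difficulty is making the combinatorial reduction watertight: checking that the top-path constraint corresponds to a genuine trapezoid (so that Zeilberger's identity applies verbatim) and tracking the exact relation between $m$, the trapezoid width, and the off-by-one between the sizes of ASMs, PCSMs, and the associated TSSCPPs.
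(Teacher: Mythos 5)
Your proposal is correct and tracks the paper's own argument: reduce $\{\max(T_n)\leq m\}$ to a gog-trapezoid count, transfer it to a magog-trapezoid (TSSCPP) count via Zeilberger's \cref{thm:Zeil1}, identify the resulting quantity as a gap probability for the Pfaffian point process with the kernel from~\cite{AC:20} (\cref{prop:discrete}), and then run a saddle-point analysis at the tangency point $\alpha=2-\sqrt{3}$ to obtain $F_1$ via \cref{prop:kernelconv,prop:kernelbounds}. The only minor inaccuracy is your remark that the $\tfrac12\mathrm{Ai}(x)\int_0^\infty\mathrm{Ai}(y-\lambda)\,d\lambda$ and $-\mathrm{sgn}(x-y)$ pieces should ``arise from boundary terms in the contour deformation'': in the paper these appear verbatim as explicit terms in the finite-$n$ kernel (see the $(-1)^n\overline{G}(n,0,x)$ term in~\eqref{eq:f12} and the $\mathrm{sgn}(x-y)$ term in~\eqref{eq:f22}), so they survive the limit directly rather than being produced by deformation residues.
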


In words, the random variable $\max T_n$ represents  the frozen triangle in the top right quadrant of the PCSMs, where the entries are equal to $n$ in this frozen triangle. It also represents the maximum of the top path of $G_n^{\mathbf{g}}$.  Our result says that the fluctuations of the maximum of the top path converges weakly to the GOE Tracy--Widom distribution after centering and rescaling.  In terms of alternating sign matrices, our result says that the fluctuations of the frozen triangle (that is all entries equal to 0 in this triangle)  in one of the four quadrants of the ASMs converges weakly to the GOE Tracy--Widom distribution after centering and rescaling.  By symmetry, this occurs for all four quadrants of the ASMs.  \cref{fig:simulation} shows a simulation. { Note that our result only recovers the arctic curve at four points and does not recover the full curve. }

\begin{figure}
\includegraphics[height=9cm]{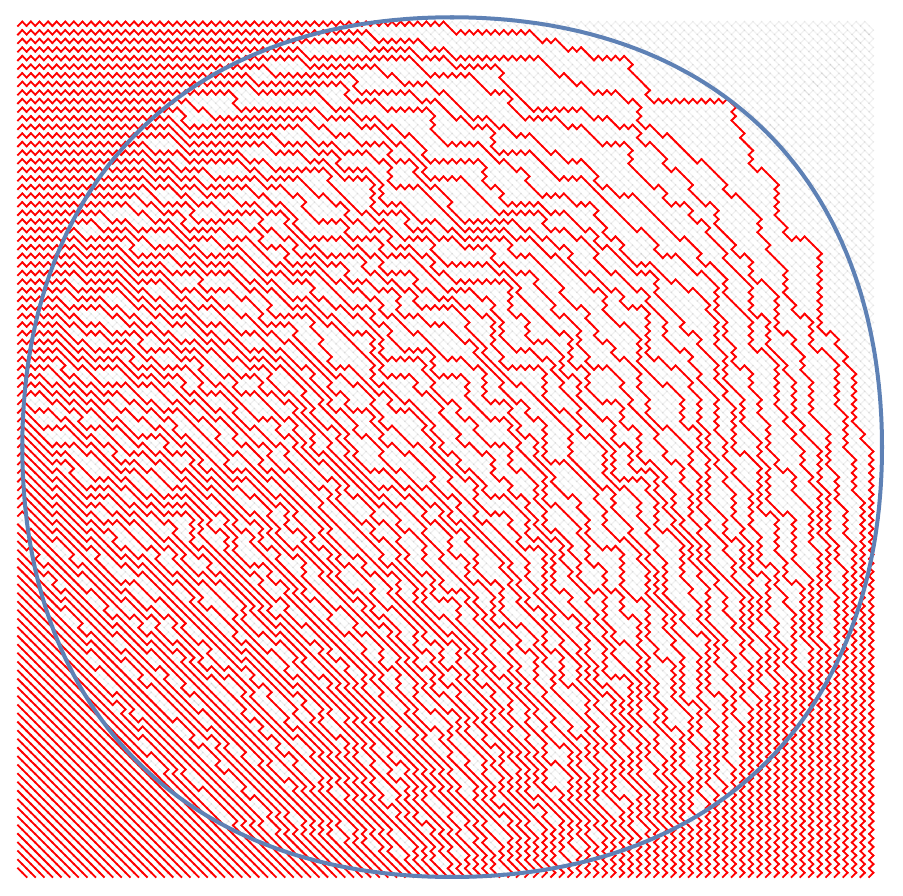}
\caption{A simulation of the directed paths for $n=100$ made from Glauber dynamics. The limit shape is overlaid.   } 
\label{fig:simulation}
\end{figure}

It is known that the distribution of the maximum of the Airy-2 process minus by a parabola is given by  GOE Tracy--Widom distribution \cite{Jo03b, CQR11}.  As we have established that the maximum of the top path has GOE Tracy--Widom fluctuations, we expect that the original path should be given by the Airy process after suitable centering and rescaling.  We expect that the maximum should be obtained close to $t=0$ and the scale factor to be the same at this point. This leads to the following conjecture.

\begin{conj}
\label{conj:GUE}
    After appropriate rescaling, $T_n$, defined in~\eqref{eq:TnAiry}, converges to the Airy-2-process, and in particular,
\begin{equation}
    \lim_{n\to \infty} \mathbb{P}^{\mathbf{g}}_n\left[  \frac{ T_n(0)-(1-\alpha)n}{4^{\frac{1}{3}}c_0 n^{\frac{1}{3}}}\leq s\right]= F_2(s),
\end{equation}
where $F_2$ is the GUE Tracy--Widom distribution \cite{TW94}. 
\end{conj}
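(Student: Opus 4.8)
# Proof proposal for Conjecture~\ref{conj:GUE}

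\textbf{Overall strategy.} The plan is to upgrade the one-point edge result of \cref{thm:mainthm} to a full process-level limit by exhibiting the top path $T_n$ as (a deterministic transform of) the top path in a Pfaffian point process coming from the \tsscpp\ / magog side via the correspondence used to prove \cref{thm:mainthm}, then running the usual ``maximum of Airy minus parabola'' circle of ideas in reverse. Concretely: (i) realize the entire rescaled path $\{(t, T_n(t))\}$, not just its maximum, in terms of the free-fermionic observables from \cite{AC:20}; (ii) establish tightness of the rescaled path in an appropriate path space (e.g.\ $C[-M,M]$ for each fixed $M$ after the parabolic change of variables) using monotonicity of the level lines in PCSMs together with the one-point control already available near $t=0$; (iii) identify finite-dimensional distributions by showing the relevant correlation kernel converges, after the scaling $t \mapsto \tau$, $h \mapsto (h-(1-\alpha)n)/(4^{1/3} c_0 n^{1/3})$, to the extended Airy kernel; (iv) deduce the stated one-point statement for $T_n(0)$ as the $\tau = 0$ marginal, noting that at $t=0$ the local parabola has a specific curvature which produces the extra factor $4^{1/3}$ relative to the $\max$ scaling $c_0 n^{1/3}$ in \cref{thm:mainthm}.

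\textbf{Key steps, in order.} First I would locate $t=0$ on the limit curve $x(2-x)+y(2-y)+(2-x)(2-y)=1$: by the symmetry of the construction, $t=0$ corresponds to the point on this curve where $x=y$, i.e.\ the ``tip'' of the frozen triangle in the top-right quadrant, giving the centering $(1-\alpha)n$. Second, I would compute the second derivative of the limiting height profile in the $t$ variable at this tip; the coefficient $4^{1/3} c_0$ in the conjecture should emerge as $(\text{curvature})^{1/3}$-type constant times $c_0$, consistent with the fact that, for a profile $h(t) \approx (1-\alpha)n - \kappa t^2/n + (\text{fluctuations})$, the maximum-versus-midpoint discrepancy is of lower order and the midpoint fluctuation scale is $n^{1/3}$ with a curvature-dependent constant; comparing against the KPM/tangent-method expansion should pin the factor $4^{1/3}$. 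Third, and the technical heart, I would push the Pfaffian-point-process analysis of \cite{AC:20} used for \cref{thm:mainthm} from a single location to a finite collection of $t$-values $\tau_1 < \dots < \tau_m$ near $0$: one needs the extended kernel $\mathbf{K}^{(n)}((t_i,h_i),(t_j,h_j))$ and its $n\to\infty$ limit. Since the edge here is a frozen/rough boundary with a GUE-type (not GOE-type) transversal statistic — the relevant object being the \emph{position} of the topmost path rather than its \emph{maximum} — the Pfaffian structure should degenerate, in the bulk of the edge, to a determinantal extended-Airy kernel; showing this degeneration and doing the steepest-descent asymptotics uniformly in the $m$ spatial parameters is the main computation. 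Fourth, I would combine tightness (step (ii)) with convergence of finite-dimensional distributions (step (iii)) to conclude $T_n \to \text{Airy}_2$ after rescaling, and read off the $\tau=0$ marginal to get the displayed equality with $F_2$.

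\textbf{Tightness input.} For step (ii) I would use that in a PCSM the level line $T_n$ is a monotone lattice path (it goes up by the $(1,1)$-steps and down by the $(1,-1)$-steps of $G_n^{\mathbf{g}}$), so the rescaled path, after subtracting the limiting parabola, has increments controlled by the fluctuations of its endpoints on any subinterval; combined with the one-point tail bounds implicit in \cref{thm:mainthm} (Pfaffian Fredholm determinant tails are exponential on one side, Gaussian-cube on the other) and a standard Kolmogorov-type chaining, this yields tightness in $C[-M,M]$ for each $M$. The monotonicity also gives the a priori bound $\max(T_n) - T_n(0) \geq 0$ and $\leq$ (length of path near $0$) $\times$ (max slope), which is needed to be sure the $\max$ scaling and the $T_n(0)$ scaling differ only through the explicit curvature constant.

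\textbf{Main obstacle.} The hardest part will be step (iii): extending the asymptotic analysis of \cite{AC:20} from the one-point kernel (sufficient for $F_1(s)$ in \cref{thm:mainthm}) to the \emph{extended} kernel governing the joint law at several $t$-values, and in particular verifying that the Pfaffian correlation structure collapses to the determinantal extended Airy kernel in the scaling window around $t=0$ — i.e.\ proving, not just conjecturing, that the transversal fluctuations are GUE rather than GOE away from the single extremal location. A secondary difficulty is making the steepest-descent estimates uniform in the spatial parameters $\tau_1,\dots,\tau_m$ and locally uniform in the height parameters, which is needed both for the finite-dimensional convergence and to upgrade to process convergence; controlling the contribution of the non-free-fermionic ``correction'' terms that appear in the gog$\leftrightarrow$magog transfer of \cite{Zei96a,AC:20} over a whole neighborhood of $t=0$ rather than at one point is where I expect the bulk of the work to lie.
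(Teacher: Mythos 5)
You should note first that the statement you were asked about is stated in the paper as a conjecture: the paper offers no proof, only the heuristic that \cref{thm:mainthm} plus the identity ``max of Airy-2 minus a parabola is GOE Tracy--Widom'' \cite{Jo03b,CQR11} makes the Airy-2 limit plausible. Your proposal does not close this gap, and its decisive flaw is step (i). The transfer to the free-fermionic TSSCPP side that powers \cref{thm:mainthm} is not a bijection of configurations but a pure counting identity: Zeilberger's \cref{thm:Zeil1}, that $(n,k)$-gog and $(n,k)$-magog trapezoids are equinumerous, which via \cref{prop:Zeil2} identifies only the law of the single scalar $X_n^{\mathbf{g}}=n-\max T_n$ with that of $X_n^{\mathbf{m}}$. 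No bijection between ASMs and TSSCPPs is known, and no refined enumeration is known that would match the position $T_n(0)$, let alone the joint law of $(T_n(\tau_1),\dots,T_n(\tau_m))$, with any observable of the dimer model on $G_n^{\mathbf{m}}$. The ASM measure sits at $\Delta=1/2$, is not free-fermionic, and the path ensemble on $G_n^{\mathbf{g}}$ has no known Pfaffian or determinantal structure; the Pfaffian process of \cref{prop:discrete} lives on the TSSCPP free boundary, and an ``extended'' version of it would describe TSSCPP paths, not the ASM top path at several $t$-values. So ``realizing the entire rescaled path in terms of the observables of \cite{AC:20}'' is not a step one can set up with the tools in the paper --- it is essentially the open problem itself, as the paper signals by remarking that $T_n(0)$ is closely related to the emptiness formation probability, whose fluctuations at $\Delta=1/2$ are not rigorously understood.

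Two secondary points. Your tightness argument assumes one-point tail control at fixed $t$ away from the maximizer, but \cref{thm:mainthm} only controls the global maximum, so there are no endpoint fluctuation bounds to chain from; producing such bounds for fixed $t$ again requires information the gog--magog counting does not provide. And the constant $4^{\frac{1}{3}}$ in the conjecture is not obtained from a curvature computation in the model: it is fixed by consistency between \cref{thm:mainthm} and the identity $\max_t\bigl(\mathcal{A}_2(t)-t^2\bigr)\stackrel{d}{=}$ GOE Tracy--Widom (up to the $4^{\frac{1}{3}}$ rescaling) of \cite{Jo03b,CQR11}, i.e.\ it is part of the conjectural Airy-2 scaling rather than something currently derivable from the six-vertex side. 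In short, your steps (ii)--(iv) are routine once (i) is granted, but (i) is the actual content of the conjecture and your proposal supplies no mechanism for it.
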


Note that $T_n(0)$ is very closely related to the emptiness formation probability; see~\cite{CP10,CPS16} for details on the emptiness formation probability.

To prove \cref{thm:mainthm}, we first compare $\max (T_n)$ with a specific {random variable}, 
which will be denoted by  $X_n^{\mathbf{m}}$, in the \tsscpp {} of size $n$, also defined below.  In fact, we will show that these two objects are equal in distribution, using Zeilberger's deep proof of the ASM conjecture~\cite{Zei96a}. 
{The strategy of the proof is as follows. We will prove \cref{thm:mainthm} in \cref{sec:Comparison} assuming \cref{thm:thmtsscpp} about an asymptotic result on a point process defined through \tsscpp s. 
In \cref{sec:asymptotics}, we introduce formulas for this point process in \cref{prop:discrete}  and give the proof of \cref{thm:thmtsscpp}.   
We give the proof of \cref{prop:discrete} in \cref{sec:derivation} using formulas from \cite{AC:20}.
}

\section{{ASMs and TSSCPPs}} 
\label{sec:Comparison}

\subsection{TSSCPPs} 
\label{subsec:TSSCPPs}
A \tsscpp{} (TSSCPP) of order $n$ {is a} rhombus tiling of a regular hexagon with side length $2n$ with the maximum possible symmetry. {All} the information about the tiling is contained in $(1/12)$'th of the hexagon~\cite[Section 8]{MRR86}. 
{A TSSCPP can be equivalently represented as a perfect matching of a certain graph, which we describe now, following the same conventions as in \cite{AC:20}.}

Define the graph $G_n^{\mathbf{m}} = (\mathtt{V}_n^{\mathbf{m}}, \mathtt{E}_n^{\mathbf{m}})$, for which the vertices are given by 
\begin{equation}	
	\mathtt{V}_n^{\mathbf{m}} = \{ (x_1,x_2) \mid 0 \leq x_1\leq 2n, x_1 \leq x_2 \leq 2n+1\} \backslash \{ (2n,2n+1) \mathbbm{1}_{n \in 2\mathbb{Z}+1} \} 
\end{equation}
and  the edges are given by 
\begin{equation}
	\begin{split}
		\mathtt{E}_n^{\mathbf{m}} =& \left\{ (( x_1,x_2),(x_1+1,x_2) \left|
		\substack{\ds 0\leq x_1 \leq 2n-1,x_1 \leq x_2 \leq 2n+1, \\ \ds [x_1+x_2]_2=1} \right. \right\} \\
 & \cup \{ (( x_1,x_2),(x_1,x_2+1)\mid 0\leq x_1 \leq 2n-\mathbbm{1}_{n \in 2\mathbb{Z}+1},x_1 \leq x_2 \leq 2n\} \\
		&\cup \{ ((x_1,x_1),(x_1+1,x_1+1))\mid 0\leq x_1 \leq 2n-1 \} ;
	\end{split}
\end{equation}
see \cref{fig:rectanglecoords}. We let $\mathbf{b}$ denote the vertex $(2n,2n+1-[n]_2)$.

\begin{figure}[htbp!]
\begin{center}
\begin{tabular}{cc}
	\includegraphics[height=3.5cm]{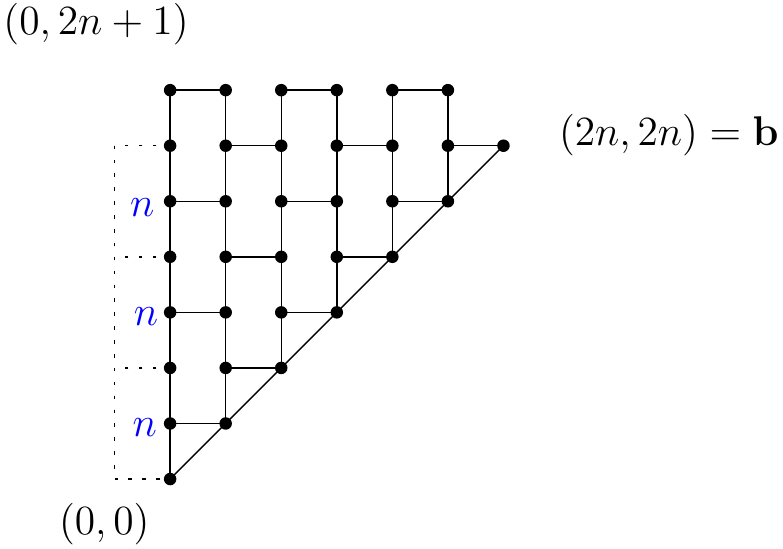} &
	\includegraphics[height=3.5cm]{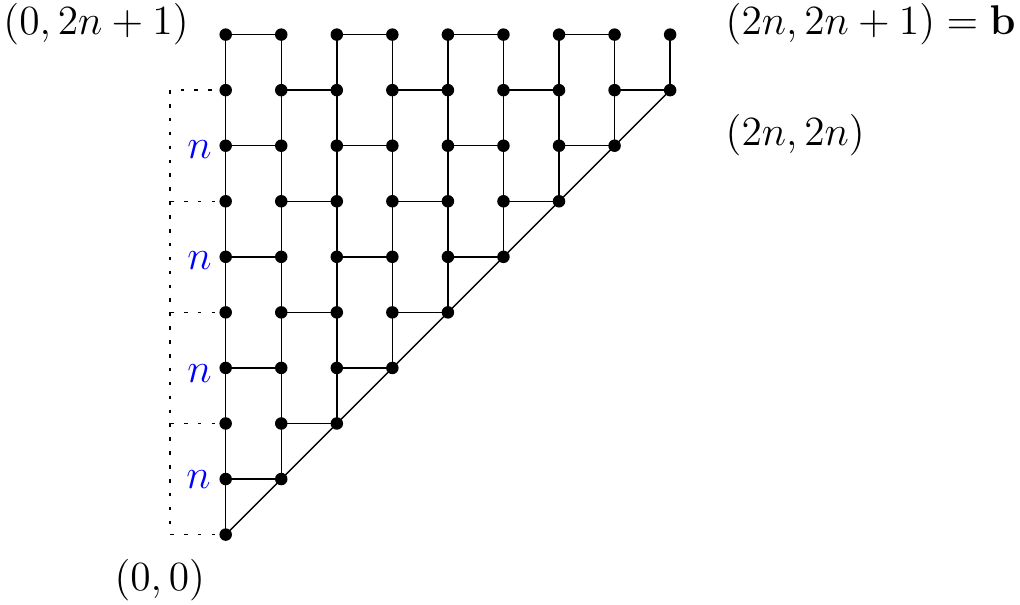} \\
	(a) & (b)
\end{tabular}
	\caption{The left figure shows $G^{\mathbf{m}}_3$, while the right figures shows  $G^{\mathbf{m}}_4$. The coordinates are for the top left, rightmost and bottom left vertices. For both, the height function on rectangular faces left of the graph is shown in blue. } 
	\label{fig:rectanglecoords}
\end{center}
\end{figure}
A dimer is an edge and a dimer covering is a subset of edges so that each vertex is incident to exactly one edge. 

\begin{prop}[{\cite[Proposition 2.2]{AC:20}}]
\label{prop:tsscpp-bij}
Dimer configurations of $G_n^{\mathbf{m}}$ are in bijection with TSSCPP configurations of size $n+1$
\end{prop}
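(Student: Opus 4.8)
The plan is to route through the classical encoding of plane partitions by symmetric lozenge tilings, extract the fundamental domain explicitly, and recognize its dimer model as $G_n^{\mathbf{m}}$.

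First I would recall that a plane partition inside the $2N\times 2N\times 2N$ box, with $N=n+1$, is the same datum as a lozenge (rhombus) tiling of a regular hexagon $H_N$ with all sides of length $2N$. The group $\Gamma$ generated by the $S_3$ of coordinate permutations together with the complementation involution acts on $H_N$ with order $12$, and a TSSCPP of order $N$ is precisely a $\Gamma$-invariant tiling; such a tiling is determined by its restriction to any fundamental domain, which is $1/12$ of $H_N$, see~\cite[Section 8]{MRR86}. Concretely one can take the fundamental domain $D$ to be a triangular region cut out of the triangular lattice, bounded by one boundary edge of $H_N$ together with a segment of the long diagonal through the centre and a segment of a short diagonal.

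Second, I would determine which lozenges inside $D$ are forced. Along the two cuts of $\partial D$ lying in the interior of $H_N$, $\Gamma$-equivariant gluing forces a deterministic strip of lozenges, namely those meeting the reflection axis and those straddling the centre of $H_N$; deleting this strip leaves a smaller region $D'$ whose free tilings biject with those of $D$, hence with TSSCPPs of order $N$. I would then convert lozenge tilings of $D'$ into perfect matchings in the standard way: place a dimer across each rhombus, obtaining a bijection with dimer covers of the subgraph of the honeycomb lattice dual to $D'$. A single shear (affine change of coordinates) straightens this honeycomb subgraph into the brick-wall graph on $\{(x_1,x_2): 0\le x_1\le 2n,\ x_1\le x_2\le 2n+1\}$: the two non-axis edge directions of the honeycomb become the vertical edges $((x_1,x_2),(x_1,x_2+1))$ and the horizontal edges $((x_1,x_2),(x_1+1,x_2))$, the latter surviving exactly where $x_1+x_2$ is odd because of the bipartite two-colour structure of the honeycomb, while the lozenges sitting on the image of the reflection axis become the diagonal edges $((x_1,x_1),(x_1+1,x_1+1))$. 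The forced strip from the previous step is exactly what makes $D'$ a triangle rather than a parallelogram and is responsible for deleting the corner vertex $(2n,2n+1)$ when $n$ is odd; this is a parity effect, governed by whether the central lozenge of $H_N$ falls in $D$ or in a neighbouring copy, i.e.\ by the parity of $N=n+1$. Finally I would match the forced boundary dimers of $G_n^{\mathbf{m}}$ (readable off \cref{fig:rectanglecoords}) against the lozenges forced along $\partial D'$ by the TSSCPP boundary of $H_N$, and check that the distinguished vertex $\mathbf{b}=(2n,2n+1-[n]_2)$ is the one covering the image of the centre of $H_N$.

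The main obstacle is getting the fundamental domain and its forced boundary lozenges exactly right, together with the $n$-parity case distinction, so that the honeycomb subgraph produced is precisely $G_n^{\mathbf{m}}$ and not an off-by-one variant; the two purely structural steps, lozenge $\leftrightarrow$ dimer and honeycomb $\leftrightarrow$ brick wall, are standard and contribute no real difficulty.
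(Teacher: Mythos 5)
The paper does not prove this proposition: it is quoted directly from \cite[Proposition~2.2]{AC:20}, and indeed $G_n^{\mathbf{m}}$ is defined here explicitly ``following the same conventions as in \cite{AC:20}.'' Your outline---restrict to the $1/12$ fundamental domain of the $\Gamma$-action on the hexagon of side $2(n+1)$, identify the forced lozenges along the interior cuts, convert the free part to dimers on the dual honeycomb subgraph, and shear to brick-wall coordinates with the diagonal edges $((x_1,x_1),(x_1+1,x_1+1))$ accounting for the symmetry axis---is the expected construction and almost certainly mirrors what \cite{AC:20} does. You isolate the right pressure points: the exact fundamental domain, the forced strip along the cuts, the role of the axis, and the parity-dependent deletion of $(2n,2n+1)$.

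The one step worth looking at more carefully is your claim that ``the two non-axis edge directions'' of the honeycomb map one-to-vertical and one-to-horizontal. In a shear of the honeycomb to a brick-wall graph, two of the three honeycomb edge directions must both become vertical (stacking into continuous vertical lines) and one becomes horizontal, with the parity condition $[x_1+x_2]_2=1$ recording the bipartite colouring; the diagonal edges $((x_1,x_1),(x_1+1,x_1+1))$ of $G_n^{\mathbf{m}}$ are then a boundary artifact of the reflection axis rather than the sheared image of a third edge direction. This mis-accounting of directions is exactly the kind of coordinate bookkeeping you yourself flag as the remaining obstacle, so the discrepancy does not undermine the strategy; but it does mean that, as written, the outline would need this local matching redone against the definition of $\mathtt{E}_n^{\mathbf{m}}$ and Figure~\ref{fig:rectanglecoords} before it is a proof.
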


There is a height function associated to TSSCPPs, even though the graph $G_n^{\mathbf{m}}$ is not bipartite.  This height function is defined on {rectangular} faces of the graph as follows: the height on the faces to the left of the TSSCPP, i.e. those on faces whose center is given by $(-\frac{1}{2},2j+1)$ for $0\leq j \leq n$ are given by $n$; see \cref{fig:rectanglecoords}.  Suppose the height at the face with center $(i+\frac{1}{2},j)$ is given by $t$. Then the height at the face whose center is $(i+\frac{3}{2},j+1)$ is $t$ if there is a dimer covering the edge $((i+1,j),(i+1,j+1))$ and is $t-1$ if there is no dimer covering the edge $((i+1,j),(i+1,j+1))$. The prescription of the faces to the left of the TSSCPP and the above rule define the heights on all the hexagonal faces of $G_n^{\mathbf{m}}$ for each dimer covering and ensures that the heights are integer-valued in $\{0,1,\dots,n\}$.  
See \cref{fig:TSSCPPsize2} for height functions of all TSSCPPs of size 2. 

\begin{figure}[htbp!]
\includegraphics[height=6cm]{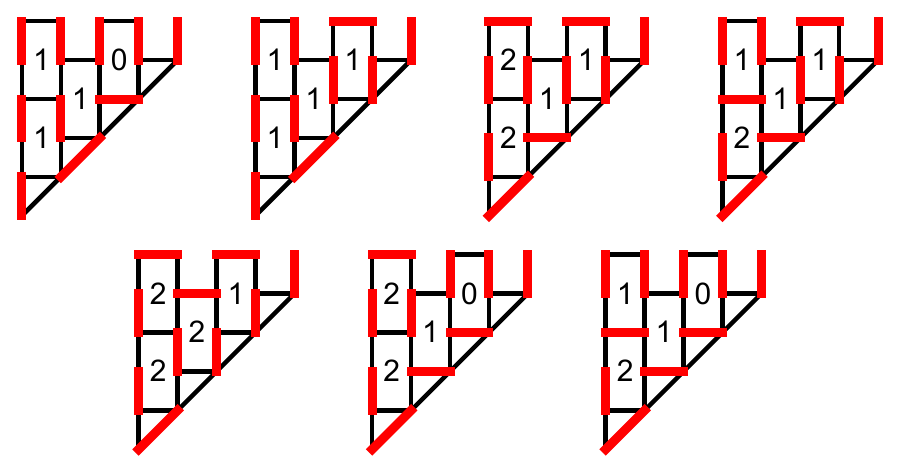}
\caption{The 7 TSSCPPs of size 2 along with their height functions.}
\label{fig:TSSCPPsize2}
\end{figure}

Each dimer configuration is chosen uniformly from the set of dimer configurations $\mathcal{M}(G_n^{\mathbf{m}})$, that is, for $M \in \mathcal{M}(G_n^{\mathbf{m}})$,
$$
\mathbb{P}^{\mathbf{m}}_n[M] = \frac{1}{Z_n},
$$
using the alternating sign matrix theorem~\cite{Zei96a,Kup96}.

\begin{rem} \label{rem:notation}
Since we focus on two different objects and their variations, we use the notation that $\mathbf{g}$ refers to any objects related to alternating sign matrices and $\mathbf{m}$ refers to any objects related to TSSCPPs.  These stand for gogs and magogs respectively which are introduced in the next subsection. 
\end{rem}

\subsection{Gogs and Magog trapezoids} 
\label{subsec:gogsmagogs}

\begin{defn}
\label{def:gog-triangle}
A \emph{monotone triangle} or \emph{gog triangle} of order $n$ is a triangular array $(g_{i,j})_{1 \leq j \leq i \leq n}$ of positive integers such that

\begin{itemize}
\item $g_{i,j} \leq g_{i-1,j} \leq g_{i,j+1}$ whenever all the entries are defined,

\item $g_{i,j} < g_{i,j+1}$ whenever both entries are defined,

\item $1 \leq g_{i,j} \leq n+1$, for $1 \leq j \leq i \leq n$.
\end{itemize}
\end{defn}

It is easy to see that monotone triangles of order $n$ are in natural bijection with $\mathcal{A}_{n+1}$.
For a matrix $A=(a_{i,j})_{1 \leq i,j \leq n+1} \in \mathcal{A}_{n+1}$, consider the 
$n \times (n+1)$ matrix $b$ defined by
\begin{equation}
b_{i,j} = {\sum_{1 \leq i' \leq i} a_{i',j}}, \quad i \in [n], j \in [n+1].
\end{equation}
One can see that $b$ is a $\{0,1\}$-matrix and in the $i$'th row, $b$ has exactly $i$ $1$'s. Arranging the column indices $j$ for which $b_{i,j} = 1$ in increasing order for each $i$ gives 
a triangular array which satisfies all the conditions of a gog triangle. The process is clearly bijective.
Our convention  to remove the bottom row of the triangular array differs from the literature~\cite{Bre99}, but makes it easier for our upcoming analysis. 
In particular, our monotone triangles of order $n$ are in natural bijection with PCSMs of size $n$.
In our running example, the monotone triangles in bijection with $\mathcal{A}_3$ are given in the same order as \eqref{eg-asm3} by
\begin{equation}
\label{eg:gogs2}
\begin{array}{ccccccc}
\hspace*{-0.3cm}\begin{array}{ccccc}
&&1&& \\
& 1 && 2 & \\
\end{array} &&
\hspace*{-0.9cm}\begin{array}{ccccc}
&&1&& \\
& 1 && 3 & \\
\end{array} &&
\hspace*{-0.9cm} \begin{array}{ccccc}
&&2&& \\
& 1 && 2 & \\
\end{array} &&
\hspace*{-0.9cm} \begin{array}{ccccc}
&&2&& \\
& 1 && 3 & \\
\end{array}
\\
&
\hspace*{-0.9cm}\begin{array}{ccccc}
&&2&& \\
& 2 && 3 & \\
\end{array} &&
\hspace*{-0.9cm} \begin{array}{ccccc}
&&3&& \\
& 1 && 3 & \\
\end{array} &&
\hspace*{-0.9cm} \begin{array}{ccccc}
&&3&& \\
& 2 && 3 & \\
\end{array}.
\end{array}
\end{equation}

\begin{defn}
An \emph{$(n,k)$-gog trapezoid} is a trapezoidal array 
\[
(g_{i,j})_{1 \leq i \leq n, 1 \leq j \leq \min(i,k)}
\]
of positive integers such that 

\begin{itemize}
\item $g_{i,j} \leq g_{i-1,j} \leq g_{i,j+1}$ whenever all the entries are defined,

\item $g_{i,j} < g_{i,j+1}$ whenever both entries are defined,

\item $g_{i,j} \leq n+1$ for all valid $i,j$.
\end{itemize}
\end{defn}

In other words, an $(n,k)$-gog trapezoid is a gog triangle with $n$ rows after removing an equilateral triangle of size {$n-k$} from the bottom right-most side of the gog triangle. 
We let $\mathtt{Gog}(n,k)$ to be the set of $(n,k)$-gog trapezoids.
An example of a configuration in  $\mathtt{Gog}(6,3)$ is 
$$
\begin{array}{ccccccccccccc}
&&&&& 6 \\
&&&&5 && 6  \\
&&&3 && 5 && 7 \\
&& 3 &&4 &&6 &&  \\
&2 && 3&&5 && \\
2&&3&&4&& \\
\end{array}.
$$

We now describe triangular arrays in bijection with TSSCPPs.

\begin{defn}
A \emph{magog triangle} of order $n$ is a triangular array 
\[
(m_{i,j})_{1 \leq j \leq i \leq n}
\]
of positive integers such that

\begin{itemize}
\item $m_{i,j} \leq m_{i+1,j+1}$ whenever both entries are defined,

\item $m_{i,j} \geq m_{i+1,j}$ whenever both entries are defined,

\item $m_{i,j} \leq j+1$ for all valid $i,j$.
\end{itemize}
\end{defn}

Again, our convention differs from the literature in that we drop the leftmost column from the standard definition in the literature; e.g. see~\cite{Bre99}. 
Magog triangles of size 2 are given by 
\begin{equation}
\label{eg-magogs2}
\begin{array}{ccccccc}
\hspace*{-0.4cm}\begin{array}{ccccc}
&&2&& \\
& 2 && 3 & \\
\end{array} &&
\hspace*{-0.9cm} \begin{array}{ccccc}
&&2&& \\
& 2 && 2 & \\
\end{array} &&
\hspace*{-0.9cm} \begin{array}{ccccc}
&&1&& \\
& 1 && 2 & \\
\end{array} &&
\hspace*{-0.9cm} \begin{array}{ccccc}
&&2&& \\
& 1 && 2 & \\
\end{array}
\\
&
\hspace*{-0.9cm} \begin{array}{ccccc}
&&1&& \\
& 1 && 1 & \\
\end{array} &&
\hspace*{-0.9cm} \begin{array}{ccccc}
&&1&& \\
& 1 && 3 & \\
\end{array} &&
\hspace*{-0.9cm} \begin{array}{ccccc}
&&2&& \\
& 1 && 3 & \\
\end{array}
\end{array}
\end{equation}

\begin{prop}
\label{prop:magog-bij}
Magog triangles of order $n$ are in natural bijection with TSSCPPs of size $n$.
\end{prop}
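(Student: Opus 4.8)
The plan is to exhibit an explicit, invertible map between dimer configurations of $G_n^{\mathbf{m}}$ and magog triangles of order $n$, and then invoke \cref{prop:tsscpp-bij} (which identifies dimer configurations of $G_n^{\mathbf{m}}$ with TSSCPPs of size $n+1$) — wait, the orders do not match, so one must be careful. Since \cref{prop:tsscpp-bij} says dimer configurations of $G_n^{\mathbf{m}}$ correspond to TSSCPPs of size $n+1$, and the claim here is that magog triangles of order $n$ correspond to TSSCPPs of size $n$, the cleanest route is instead to compose known bijections at matching orders: namely, to show directly that magog triangles of order $n$ are in bijection with dimer configurations of $G_{n-1}^{\mathbf{m}}$ (equivalently, to note that our ``order $n$'' convention drops the leftmost column, so that an order-$n$ magog triangle in our sense has the same data as a standard magog triangle of order $n$, which is classically equinumerous with — indeed, by Mills--Robbins--Rumsey~\cite{MRR86}, in natural bijection with — TSSCPPs inside a $2n\times 2n\times 2n$ box). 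I would first reconcile the conventions, making explicit that our ``magog triangle of order $n$'' is obtained from the literature's magog triangle by deleting the leftmost (all-ones, forced) column, so no information is lost, and then cite the classical correspondence between standard magog triangles and TSSCPPs together with \cref{prop:tsscpp-bij} and the dimer--TSSCPP dictionary of \cite{AC:20}.

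The substantive content, if one wants a self-contained argument, is the direct map: given a dimer cover $M$ of $G_n^{\mathbf{m}}$, read off from the height function (defined on the rectangular faces as in \cref{subsec:TSSCPPs}) the value assigned to the face with center $(i+\tfrac12, j)$; the inequalities ``height changes by $0$ or $-1$ across a vertical edge, by the dimer rule'' translate precisely into the two monotonicity conditions $m_{i,j}\le m_{i+1,j+1}$ and $m_{i,j}\ge m_{i+1,j}$, and the cap $m_{i,j}\le j+1$ comes from the boundary heights (equal to $n$ on the far-left faces) together with the triangular shape of $G_n^{\mathbf{m}}$, which forces the maximum number of available decrements along the diagonal. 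I would set up the index correspondence between lattice faces and triangle entries carefully, verify that each magog inequality is equivalent to a local dimer/height constraint, and check the boundary cases along the hypotenuse and the left edge of $G_n^{\mathbf{m}}$.

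The main obstacle will be bookkeeping with the two competing order conventions (ours versus the literature's, and the $n$ versus $n+1$ shift between \cref{prop:tsscpp-bij} and the present statement) — getting these aligned so the bijection lands on the right object is the only place where an error is likely. A secondary subtlety is the parity-dependent vertex $(2n, 2n+1)\mathbbm{1}_{n\in 2\mathbb{Z}+1}$ excised from $\mathtt{V}_n^{\mathbf{m}}$ and the companion vertex $\mathbf{b}$: one must confirm that the height-function prescription still extends consistently to all rectangular faces for both parities of $n$, so that the resulting array is a genuine magog triangle with the correct last row. Once the index dictionary is fixed, the verification that each of the three defining inequalities corresponds to a local constraint is routine, and invertibility is immediate since the height function determines, and is determined by, the dimer cover. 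Given the level of detail already present in \cite{AC:20}, I expect the write-up to be short: reconcile conventions, state the face-to-entry map, and refer to the classical MRR correspondence for the remaining verification.
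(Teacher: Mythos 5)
The paper's proof is a direct combinatorial bijection between dimer coverings of $G_n^{\mathbf{m}}$ and magog triangles of order $n$: for each $1\le k\le n$ one lists, in increasing order, the $x$-coordinates of the matched vertical edges $((i,i+2(n-k)+1),(i,i+2(n-k)+2))$, $0\le i\le 2k-1$, and then subtracts entrywise a fixed staircase triangle whose $k$-th row is $(-1,0,\dots,k-2)$; the magog inequalities follow from interlacing of consecutive rows of vertical dimers, and invertibility holds because the vertical dimers determine the entire cover. This is a different extraction from the one you propose: the paper records \emph{positions} of matched vertical edges and corrects by a staircase, whereas you propose reading off \emph{height-function values} on rectangular faces. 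The two encodings are dual and your route could be made to work, but it is not carried out — the claim that the face heights ``translate precisely'' into the magog entries and the bound $m_{i,j}\le j+1$ is asserted, not checked, and in fact an affine correction (the analogue of the paper's staircase subtraction) is needed before the face heights become literal magog entries.

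Two further points. First, your initial plan — to target $G_{n-1}^{\mathbf{m}}$ — would fail on cardinality grounds: $|\mathcal{M}(G_{n-1}^{\mathbf{m}})|=Z_{n-1}=|\mathcal{A}_n|$, while magog triangles of order $n$ number $|\mathcal{A}_{n+1}|$ (by \cref{thm:Zeil1} with $k=n$ and the gog/ASM bijection). Your unease about the order shift is a fair reaction to the paper's own inconsistency — \cref{prop:tsscpp-bij} attaches ``size $n+1$'' to dimer coverings of $G_n^{\mathbf{m}}$, while the caption of \cref{fig:TSSCPPsize2} and the present proposition both call the same objects ``size $n$'' — but the correct resolution is to adopt the latter convention, not to shift the graph index. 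Second, the detour through the classical Mills--Robbins--Rumsey correspondence and the ``drop the forced leftmost column'' remark is unnecessary here and is not what the paper does; once one is in the dimer picture, the bijection is short and self-contained.
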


\begin{proof}
By \cref{prop:tsscpp-bij}, it suffices to consider perfect matchings of $G^{\mathbf{m}}_n$.
For such a perfect matching, build the $k$'th row, $1 \leq k \leq n$, of the magog triangle by considering vertical edges of the form $\{(i,i+2(n-k)+1),(i,i+2(n-k)+2): 0 \leq i \leq 2k-1 \}$ and listing the $x$-coordinates of the matched edges in increasing order. 
From this triangular array, subtract entrywise the triangular array
$$
\begin{array}{ccccccccccccc}
&&&&& -1 \\
&&&&-1 && 0  \\
&&&-1 && 0 && 1 \\
&& \iddots &&\iddots &&\ddots &&\ddots &&  \\
&-1 && 0 && \cdots  &&  n-3 && n-2.
\end{array}
$$
{
The interlacing property of the vertical edges of the perfect matching ensures that this operation gives a magog triangle. The fact that the vertical edges determine the perfect matching ensures that this operation can be inverted.}
\end{proof}

The magog triangles in \eqref{eg-magogs2} are listed in the same order as the
perfect matchings in \cref{fig:TSSCPPsize2} to which they correspond by the bijection in \cref{prop:magog-bij}.

\begin{defn}
An \emph{$(n,k)$-magog trapezoid} is a trapezoidal array 
\[
(m_{i,j})_{1 \leq i \leq n, \max(1,i-k+1) \leq j \leq i}
\]
of positive integers such that 

\begin{itemize}
\item $m_{i,j} \leq m_{i+1,j+1}$ whenever both entries are defined,

\item $m_{i,j} \geq m_{i+1,j}$ whenever both entries are defined,

\item $m_{i,j} \leq j+1$ for all valid $i,j$.
\end{itemize}

\end{defn}

In other words, an $(n,k)$-magog trapezoid is a magog triangle with $n$ rows after removing an equilateral triangle of size {$n-k$} from the bottom left-most side of the magog triangle. 
We let $\mathtt{Magog}(n,k)$ to be the set of $(n,k)$-magog trapezoids.
An example of a configuration in  $\mathtt{Magog}(6,3)$ is 
$$
\begin{array}{ccccccccccccc}
&&&&& 1 \\
&&&&1 && 2  \\
&&&1 && 1 && 2 \\
&&&& 1 &&2 &&4 &&  \\
&&&&& 1 && 2&&5 \\
&&&&&& 1&&3&&7
\end{array}
$$

Zeilberger proved the following result.

\begin{thm}[{\cite[Lemma 1]{Zei96a}}]
\label{thm:Zeil1}
For $n \geq 1$ and $0 \leq k \leq n$, $\mathtt{Magog}(n,k)=\mathtt{Gog}(n,k)$.
\end{thm}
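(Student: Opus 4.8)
The plan is to prove \cref{thm:Zeil1} by reducing each of the two enumeration problems to a constant-term identity and then showing that the two constant terms coincide; this is the route of Zeilberger~\cite{Zei96a}, streamlined in~\cite{fischer-2016}. I would begin on the gog side. An $(n,k)$-gog trapezoid is obtained from a monotone triangle of order $n$ by deleting the bottom-right triangle of size $n-k$, so its entries form a Gelfand--Tsetlin-type array with strict increase along rows; by the Lindström--Gessel--Viennot lemma the number of such arrays with a prescribed staircase boundary equals a determinant of binomial coefficients. Summing that determinant over all admissible boundary data, subject to the global bound $g_{i,j}\le n+1$, packages $|\mathtt{Gog}(n,k)|$ as the constant term, in auxiliary variables $z_1,\dots,z_k$, of an explicit Laurent polynomial built from powers of $(1+z_i)$ and a Vandermonde-type factor $\prod_{i<j}\bigl(1-z_i/z_j\bigr)$.

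Next I would carry out the analogous reduction on the magog side. Using the dimer and lattice-path description behind \cref{prop:tsscpp-bij} and \cref{prop:magog-bij}, an $(n,k)$-magog trapezoid is again equivalent to a family of non-intersecting lattice paths, but now the constraint $m_{i,j}\le j+1$ is imposed column by column rather than globally. Applying the Lindström--Gessel--Viennot lemma a second time and summing over the free boundary row expresses $|\mathtt{Magog}(n,k)|$ as the constant term of a second explicit Laurent polynomial in $z_1,\dots,z_k$. After these two reductions, \cref{thm:Zeil1} becomes the purely algebraic assertion that the two constant terms are equal for all $n\ge 1$ and $0\le k\le n$.

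The heart of the argument, and the step I expect to be overwhelmingly the hardest, is proving this equality of constant terms. I would attempt it by induction on $k$: the base case $k=0$ is trivial since both sides equal $1$, and one then shows that the two constant-term expressions satisfy the same recurrence in $k$ (and in $n$), obtained by expanding in the innermost variable $z_k$ and collecting residues. Executing this forces one through a cascade of nontrivial rational-function identities, essentially partial-fraction and Lagrange-interpolation manipulations of the Vandermonde factors, and this is precisely the intricate calculation that Zeilberger performed and had independently verified. A second route worth pursuing in parallel is to bypass constant terms and use Fischer's operator formulation~\cite{fischer-2016}: write $|\mathtt{Gog}(n,k)|$ as a product of shift operators of the form $\mathrm{id}+E_{x_j}-E_{x_i}^{-1}E_{x_j}$ applied to a product of binomials, show the same operator expression computes $|\mathtt{Magog}(n,k)|$, and deduce the identity symbolically.

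Finally I would record the consistency checks: the case $k=n$ asserts that the numbers of monotone and magog triangles of order $n$ agree, which is the Mills--Robbins--Rumsey/Andrews product formula~\eqref{asm-formula}, and small cases are readily confirmed by hand. The full statement for all pairs $(n,k)$ is strictly stronger than the ASM conjecture and is what is actually used downstream in \cref{sec:Comparison}; since no bijective proof is known, the argument is unavoidably algebraic.
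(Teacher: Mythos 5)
The paper does not actually prove \cref{thm:Zeil1}; it is stated as a quotation of Zeilberger's Lemma~1 from~\cite{Zei96a}, and the authors use it as a black box. There is therefore no ``paper's own proof'' against which to compare your argument, and the appropriate reading of your submission is as a sketch of how one \emph{would} reprove the cited lemma.

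Read that way, your sketch is a reasonable bird's-eye description of the known strategy: reduce each side to a constant-term expression via non-intersecting lattice paths and Lindstr\"om--Gessel--Viennot, then prove the two constant terms agree, or alternatively use Fischer's operator reformulation in~\cite{fischer-2016}. But it is not a proof. You explicitly flag the constant-term identity as ``the step I expect to be overwhelmingly the hardest,'' and you do not carry it out; the proposed induction on $k$ is announced rather than executed, and the ``cascade of nontrivial rational-function identities'' you allude to is precisely the content of Zeilberger's 84-page argument (checked, famously, by a large collection of referees). Since the case $k=n$ already recovers the full ASM theorem, there is no shortcut here: without the constant-term identity, or an equally heavy substitute, the theorem is not established. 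In short, the gap is that the entire analytic core of the argument is missing; what remains is a faithful table of contents for~\cite{Zei96a} and~\cite{fischer-2016}. For the purposes of this paper, the correct move is simply to cite the lemma, which is what the authors do.

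One smaller point worth being careful about: the gog and magog trapezoids defined in this paper (and the shift conventions for the triangular arrays) differ slightly from those in Mills--Robbins--Rumsey and from Zeilberger's own, as the authors note after \cref{tab:num-trapezoids}. If you do want to reduce to the literature, you should say explicitly why your $(n,k)$-trapezoids are equinumerous with the ones for which~\cite{Zei96a} proves the equality --- the paper remarks they sit inside Krattenthaler's more general families, but the translation is a step you would need to make precise rather than assume.
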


Setting {$k=n$}, we see that the number of PCSMs of size $n$ is equal to the number of TSSCPPs of size $n$, which is another formulation of the ASM theorem~\cite{Zei96a,Kup96}.
The number of these trapezoids for different values of $n$ and $k$ for $n \leq 6$ is given in \cref{tab:num-trapezoids}.
When $k=1$, {it is easy to see that} the sequence gives the well-known Catalan numbers $\frac{1}{n+1} \binom{2n}{n}$.

\begin{table}[htbp!]
\begin{center}
\resizebox{\textwidth}{!}{
\begin{tabular}{c|ccccccccccccc}
$n$ & \\
\hline
1&&&&&& 2\\
2&&&&& 5 && 7\\
3&&&& 14 && 35 && 42\\
4&&& 42 && 219 && 387 && 429\\
5&& 132 && 1594 && 4862 && 7007 && 7436 \\
6& 429 && 12935 && 76505 && 166296 && 210912 && 218348
\end{tabular}
}
\caption{The number of $\mathtt{Magog}(n,k)$ and $\mathtt{Gog}(n,k)$ trapezoids for small values of $n$ and $1 \leq k \leq n$.}
\label{tab:num-trapezoids}
\end{center}

\end{table}

We note that our gog and magog trapezoids, while clearly equinumerous with those defined by Mills--Robbins--Rumsey~\cite{MRR86}, are not exactly the same. However, they are a part of the more general families of such trapezoids defined by Krattenthaler~\cite{krattenthaler-1996,krattenthaler-2016}.

\subsection{The random variables $X^{\mathbf{g}}_n$ and $X^{\mathbf{m}}_n$}
{We now introduce the relevant random variables for PCSMs and TSSCPPs.
For ${C}=({c}_{i,j})_{1 \leq i,j \leq n} \in \mathcal{C}_n$, define 
\begin{equation}
X^{\mathbf{g}}_n \equiv X^{\mathbf{g}}_n(C) =\inf \left\{m \geq 1 \left|
\substack{\ds \mbox{there exists $1 \leq k \leq m$} \\ \ds \mbox{such that }{c}_{k,n-m+k}\not =n } \right. \right\}.
\end{equation}
It is easy to see that $X^{\mathbf{g}}_n \leq n+1$ and that there is only one PCSM $C$ satisfying $X^{\mathbf{g}}_n(C) = n+1$. 
By definition, the PCSMs with $\max T_n=m$ are those that have the $n-m-1$ rightmost northwest-southeast diagonals equal to $n$ and so it follows  that $\max T_n = n-X_n^{\mathbf{g}}$.

Similarly, for $M_n \in \mathcal{M}(G_n^{\mathbf{m}})$, define
\begin{equation}
X^{\mathtt{m}}_n \equiv X^{\mathtt{m}}_n(M_n)= \inf \left\{ m \geq 1 \left|
\substack{((m-1,m),(m-1,m+1)) \mbox{ not covered} \\ \mbox{by a dimer in }M_n  } \right. \right\}.
\end{equation}
{See \cref{fig:tsscppsim} for an illustration.}
Then under the uniform distributions of ASMs and TSSCPPs, $X^{\mathbf{g}}_n$ and $X^{\mathbf{m}}_n$ are random variables.
Using \eqref{eg-pcsm3} and \cref{fig:G_3}, the histogram of values of $X^{\mathbf{g}}_2$ and $X^{\mathbf{m}}_2$ are computed to be 
\[
\begin{array}{|c||c|c|c|}
\hline
\text{Value of } X^{\mathbf{g}}_2 , X^{\mathbf{m}}_2 & 1 & 2 & 3 \\
\hline
\text{Count} & 2 & 4 & 1 \\
\hline
\end{array}
\]
We now show that both have the same distribution in general.
Write
$$
\{X_n^{\mathbf{m}} = m \}=\{ M \in \mathcal{M}(G_n^{\mathbf{m}}): X_n^{\mathbf{m}}(M) = m\}
$$
and
$$
\{X_n^{\mathbf{g}} = m \}= \{ {C} \in \mathcal{C}_n: X_n^{\mathbf{g}}({C}) = m\}.
$$

\begin{prop} 
\label{prop:Zeil2}
For $n\geq 1$ and $1 \leq m \leq n$
\begin{equation}
\mathbb{P}^{\mathbf{m}}_n[ X^{\mathbf{m}}_n=m]
=
\mathbb{P}^{\mathbf{g}}_n[ X^{\mathbf{g}}_n=m].
\end{equation}
\end{prop}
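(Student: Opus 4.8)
The strategy is to identify each event $\{X_n^{\mathbf{g}} = m\}$ and $\{X_n^{\mathbf{m}} = m\}$ with the complement (within the full set of size-$n$ objects) of a set of ``smaller'' configurations that is exactly counted by a gog/magog trapezoid, and then invoke Zeilberger's \cref{thm:Zeil1}. Since $\mathbb{P}^{\mathbf{g}}_n$ and $\mathbb{P}^{\mathbf{m}}_n$ are both uniform on sets of the same cardinality $Z_n$ (by the ASM theorem), it suffices to show that $\#\{X_n^{\mathbf{g}} = m\} = \#\{X_n^{\mathbf{m}} = m\}$ for each $1 \le m \le n$.

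First I would translate $X_n^{\mathbf{g}}$ into the language of monotone (gog) triangles via the bijection of \cref{prop:pcsm-bij} together with the PCSM--monotone triangle correspondence described after \cref{def:gog-triangle}. The condition $c_{k,n-m+k} = n$ along the $m$-th anti-diagonal translates into a statement that certain entries in the top $m$ rows of the monotone triangle take their maximal allowed value; concretely, $X_n^{\mathbf{g}} > m$ should be equivalent to the monotone triangle of order $n$ having its last $n-m$ ``diagonal'' entries forced, which is the same as saying its first $m$ rows form an $(m-1,\text{something})$-gog-type array with the relaxed upper bound --- i.e., an element of $\mathtt{Gog}(n,k)$ for an appropriate $k = k(m)$. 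Thus $\#\{X_n^{\mathbf{g}} \le m\} $, or rather $\#\{X_n^{\mathbf{g}} > m\}$, equals $\#\mathtt{Gog}(n,k)$; by inclusion-exclusion $\#\{X_n^{\mathbf{g}} = m\} = \#\mathtt{Gog}(n,k(m)) - \#\mathtt{Gog}(n,k(m-1))$ with the convention that $k(0)$ gives the empty trapezoid.

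Next I would do the analogous computation on the TSSCPP side. Using \cref{prop:magog-bij} and the description of $X_n^{\mathbf{m}}$ in terms of the uncovered vertical edge $((m-1,m),(m-1,m+1))$, the event $X_n^{\mathbf{m}} > m$ should correspond to the first $m$ rows of the associated magog triangle being frozen in a way that leaves exactly an $(n,k(m))$-magog trapezoid worth of freedom --- the freezing of the topmost vertical dimers forces the small upper-left entries of the magog triangle, and removing those forced entries is precisely the operation of deleting an equilateral triangle from the bottom-left. So $\#\{X_n^{\mathbf{m}} > m\} = \#\mathtt{Magog}(n,k(m))$ for the \emph{same} $k(m)$, and the same inclusion-exclusion gives $\#\{X_n^{\mathbf{m}} = m\} = \#\mathtt{Magog}(n,k(m)) - \#\mathtt{Magog}(n,k(m-1))$. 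Combining the two displays with \cref{thm:Zeil1} yields $\#\{X_n^{\mathbf{g}} = m\} = \#\{X_n^{\mathbf{m}} = m\}$, hence the equality of probabilities.

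\textbf{Main obstacle.} The routine part is the inclusion-exclusion and the appeal to Zeilberger; the delicate part is pinning down the two bijections so that the \emph{same} parameter $k(m)$ appears on both sides. On the gog side one must check carefully that ``the last $n-m$ anti-diagonals of the PCSM equal $n$'' really does cut out a sub-array satisfying exactly the $(n,k)$-gog trapezoid inequalities (including the upper bound $g_{i,j}\le n+1$ becoming vacuous on the truncated part), and on the magog side one must verify that freezing the top vertical dimers forces precisely the entries $m_{i,j}$ with $j$ small — matching the bottom-left triangle removed in the definition of $\mathtt{Magog}(n,k)$ — and that no further entries are constrained. Getting the index bookkeeping exactly right (the shift $n+1 \mapsto n$, and whether the frozen region has size $n-m-1$, $n-m$, or $n-m+1$ diagonals, as hinted in the remark $\max T_n = n - X_n^{\mathbf{g}}$) is where the real care is needed; once the correspondence $\{X^{\bullet}_n > m\} \leftrightarrow (n,k(m))$-trapezoids is established identically for both families, the proposition is immediate.
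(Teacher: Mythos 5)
Your proposal follows essentially the same route as the paper's own proof: the paper likewise shows that (in its notation) $\{X_n^{\mathbf{g}}=m\}=\{Y^{\mathbf{g}}_n=n+1-m\}\setminus\{Y^{\mathbf{g}}_n=n-m\}$ with $|\{Y^{\mathbf{g}}_n=k\}|=|\mathtt{Gog}(n,k)|$, the analogous identity $|Y^{\mathbf{m}}_n(k)|=|\mathtt{Magog}(n,k)|$ on the TSSCPP side, and then invokes Zeilberger's \cref{thm:Zeil1}. Your ``inclusion-exclusion'' is exactly the paper's set-difference decomposition, and the bookkeeping you flag as the delicate point (the precise value of $k(m)$, which works out to $n+1-m$, together with a $180^\circ$ rotation of the ASM to place the frozen corner where the gog-triangle convention expects it) is indeed where the paper spends its effort.
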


\begin{proof}
The proof of this result relies on a reformulation of $X^{\mathbf{g}}_n$ and $X^{\mathbf{m}}_n$ in terms of gogs and magogs respectively.
We begin with PCSMs.
For the purposes of the proof, let
\begin{equation}
\{Y^{\mathbf{g}}_n = k\} = \bigcup_{j=n+1-k}^{n+1} \{X_n^{\mathbf{g}} = j \} \quad 0 \leq k \leq n,
\end{equation}
and ${\{Y^{\mathbf{g}}_n = -1 \}=\emptyset}$ so that PCSMs $C$ satisfying  
$Y^{\mathbf{g}}_n(C) = m$ have the $n-m$ rightmost northwest-southeast diagonals equal to $n$.
Then, we have 
\begin{equation}
\label{prop:Zeil2:proof:gog1}
\{X_n^{\mathbf{g}} = m \} = {\{Y^{\mathbf{g}}_n = n+1-m \} \setminus \{Y^{\mathbf{g}}_n = n-m\}, \quad 1 \leq m \leq n+1.}
\end{equation}
From the bijection in \cref{prop:pcsm-bij}, PCSMs $C$ satisfying 
$Y^{\mathbf{g}}_n(C) = m$ correspond to ASMs $A =({a}_{i,j})_{1 \leq i,j\leq n+1} \in \mathcal{A}_{n+1}$
satisfying $a_{i,j} = 0$ for $i+j \leq n-m+1$. By a rotation by $180$ degrees, it is clear that the number of such ASMs is the same as those satisfying $a_{i,j} = 0$ for $i+j \geq n+m+1$. 

Now, we use the bijection given after \cref{def:gog-triangle} to convert such an ASM to a gog triangle. In such a gog triangle, the number $k$ must necessarily appear in the first $m+n-k$ rows, for $m+1 \leq k \leq n$.
Therefore, $n$ has to be the last element in the $(m+1)$'th row, $n-1$ and $n$ have to be the last two element in the $(m+2)$'th row, and so on.
Hence, these triangles have a fixed equilateral triangle of size $n-m$ of the form
$$
\begin{array}{ccccccccccc}
&&&& n \\
&&&n-1 && n  \\
&&n-2 && n-1 && n \\
& \iddots &&\iddots &&\ddots &&\ddots &&  \\
m+1 &&m+2 && \cdots  &&  n-1 &&n\\
\end{array}
$$
at the bottom right-most side of the monotone triangle. 
Since this triangle at the bottom right-most side of the monotone triangle are the maximal possible entries (of this part of the monotone triangle), removing this equilateral triangle from the monotone triangle does not change the remaining entries of the monotone triangle and we are left with an $(n,m)$-gog trapezoid. 
Moreover, starting with such a gog-trapezoid, we can reverse the steps to form a PCSM $C$ satisfying $Y^{\mathbf{g}}_n(C) = m$.
Thus, we have shown that 
\begin{equation}
\label{prop:Zeil2:proof:gog2}
|\{Y^{\mathbf{g}}_n = m\}|=|\mathtt{Gog}(n,m)|.
\end{equation}

\begin{figure}[htbp!]
\includegraphics[height=4cm]{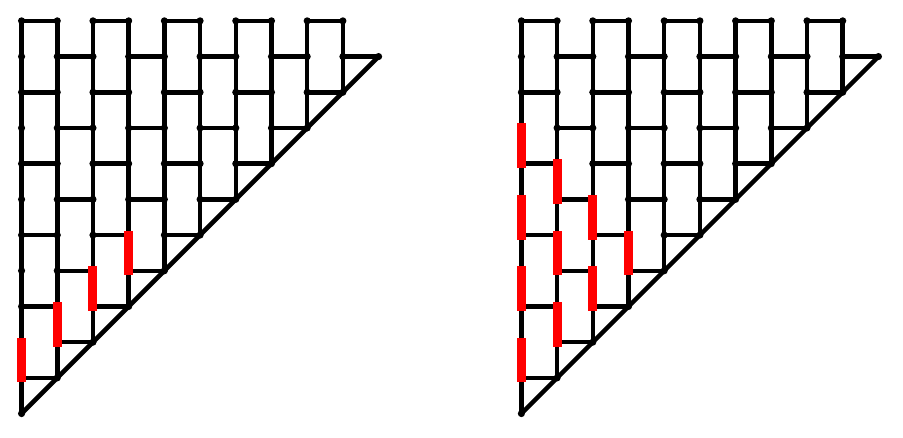}
\caption{The left figure shows the dimers that are fixed in the definition of $Y^{\mathbf{m}}_5(0)$ and the right figure shows the additional edges that are also fixed (ignoring the dimers covering edges incident to the triangle faces).}
\label{fig:TSSCPPfixed}
\end{figure}

We now introduce the corresponding notion for TSSCPPs, that is, 
\begin{equation}
Y^{\mathbf{m}}_n(m)
=
\left\{ M \in \mathcal{M}(G_n^{\mathbf{m}}) \left|
\substack{\ds\mbox{ $\forall 0 \leq k \leq n-1-m$ the edges}  \\
\ds\mbox{$((k,k+1),(k,k+2))$ are covered } \\ 
\ds\mbox{by dimers in the matching $M$} }
\right. \right\}
\end{equation}
for $0 \leq m \leq n$ so that 
$Y^{\mathbf{m}}_n(n)=\mathcal{M}(G_n^{\mathbf{m}})$.
Then, we have for $m\geq 1$
\begin{equation}
\label{prop:Zeil2:proof:magog1}
\{X_n^{\mathbf{m}} = m \}=Y^{\mathbf{m}}_n(n+1-m)\setminus Y^{\mathbf{m}}_n(n-m).
\end{equation}
We next convert magog triangles for the configurations in $Y^{\mathbf{m}}_n(m)$. First, observe that fixing the edges $((k,k+1),(k,k+2))$ to be covered by dimers for all $0 \leq k \leq n-m-1$ forces the edges 
\[
\bigcup_{k=1}^{n-m-1}\bigcup_{r=0}^{n-m-k-1} \{((r,r+2k+1),(r,r+2k+2)) \}
\]
to be covered by dimers; see \cref{fig:TSSCPPfixed} for an example. 

From \cref{prop:tsscpp-bij}, having dimers covering all these edges 
in a \tsscpp {} of size $n$ corresponds to having an equilateral triangle of size $n-m$ with all entries equal to one at the bottom left-most side of the magog triangle. Since the remaining entries of the magog triangle are not restricted by this equilateral triangle of size $m$, such magog triangles correspond exactly to $(n,m)$-magog trapezoids. Hence,
\begin{equation}
\label{prop:Zeil2:proof:magog2}
|Y^{\mathbf{m}}_n(m)|=|\mathtt{Magog}(n,m)|,
\end{equation}
completing the proof using \cref{thm:Zeil1}.
\end{proof}

We have the following theorem that will be proved in \cref{sec:asymptotics}{; see \cref{fig:tsscppsim} for a simulation.}

\begin{thm}
\label{thm:thmtsscpp}
We have that
\begin{equation}
	\lim_{n \to \infty} \mathbb{P}^{\mathbf{m}}_n\left[ \frac{X^{\mathbf{m}}_n- \alpha n}{c_0 n^{1/3}} \geq s \right] = F_1(s).
\end{equation}
\end{thm}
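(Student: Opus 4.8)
The strategy is to combine the exact Fredholm Pfaffian formula of \cref{prop:discrete} with a steepest-descent analysis of the corresponding kernel. Recall that $\{X^{\mathbf{m}}_n \ge m\}$ is exactly the event that the consecutive diagonal edges $((k,k+1),(k,k+2))$, $0 \le k \le m-2$, are all occupied in a uniformly random dimer cover of $G^{\mathbf{m}}_n$; equivalently, writing the set of \emph{vacant} such edges as a point process on $\{0,1,2,\dots\}$ (which is a Pfaffian point process because $G^{\mathbf{m}}_n$ is not bipartite), the event is the gap event that this point process avoids $\{0,\dots,m-2\}$. \cref{prop:discrete}, whose proof in \cref{sec:derivation} rests on the inverse Kasteleyn matrix of $G^{\mathbf{m}}_n$ computed in \cite{AC:20}, expresses $\mathbb{P}^{\mathbf{m}}_n[X^{\mathbf{m}}_n \ge m]$ as a Fredholm Pfaffian $\mathrm{Pf}(\mathbbm{J}-\mathbb{K}_n)$ over $\ell^2$ of a finite index set depending on $m$, where $\mathbb{K}_n$ is an explicit $2\times 2$-matrix-valued discrete kernel whose entries are double contour integrals of products of ratios of factorials times a large-$n$ exponential factor of the form $e^{n(\varphi_n(z)-\varphi_n(w))}$, plus a handful of lower-order ``one-integral'' and sign-type correction terms of the same shape as those visible in \eqref{eq:KGOE12} and \eqref{eq:KGOE22}.

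Given this, the proof of \cref{thm:thmtsscpp} is an edge asymptotic analysis. Setting $m = \lfloor \alpha n + c_0 n^{1/3} s\rfloor$ and rescaling the $\ell^2$-index around $\alpha n$ on scale $n^{1/3}$, I would show that, after conjugating $\mathbb{K}_n$ by suitable index-dependent diagonal factors (which leave the Fredholm Pfaffian unchanged), the rescaled kernel converges uniformly on compact sets to $\mathbf{K}_{\mathrm{GOE}}$. The mechanism is the standard coalescing-saddle phenomenon: for $m/n$ below the critical value the phase $\varphi=\lim \varphi_n$ has a pair of complex conjugate critical points, and these merge into a single real double critical point $w_\ast$ precisely at $m/n=\alpha$; the conditions $\varphi'(w_\ast)=\varphi''(w_\ast)=0$ force $\alpha = 2-\sqrt3$, the third derivative together with the normalizations produces the constants $c_1 = 2/3^{4/3}$ and $c_0 = 1/(3\sqrt3\,c_1)$, and the $n^{1/3}$ scaling is dictated by the cubic vanishing. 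Deforming the $z$- and $w$-contours (consistently with the poles of the factorial factors) to steepest-descent paths through $w_\ast$, the local cubic behaviour of $\varphi$ turns the double integrals into the Airy integrals in \eqref{eq:KGOE11}--\eqref{eq:KGOE22} after the change of variables $n^{1/3}(w-w_\ast)\mapsto(\text{Airy variable})$, with the parts of the contours away from $w_\ast$ contributing $O(e^{-cn})$ errors; the auxiliary one-integral and sign terms of the discrete kernel are carried along separately and are seen to converge to exactly the analogous terms of $\mathbf{K}_{\mathrm{GOE}}$, the $-\mathrm{sgn}(x-y)$ piece of $K^{22}_{\mathrm{GOE}}$ emerging from the ``diagonal'' part of the inverse Kasteleyn matrix.

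To upgrade this kernel convergence to convergence of Fredholm Pfaffians, $\mathrm{Pf}(\mathbbm{J}-\mathbb{K}_n) \to \mathrm{Pf}(\mathbbm{J}-\mathbf{K}_{\mathrm{GOE}})_{L^2(s,\infty)} = F_1(s)$, I would prove uniform-in-$n$ bounds showing that the entries of the (conjugated, rescaled) discrete kernel decay like $e^{-cx}$ as the arguments go to $+\infty$, uniformly over the admissible index range, so that every term of the Pfaffian series is dominated by an $n$-independent summable majorant and the series converges by dominated convergence; the Hadamard-type estimates for Pfaffians then make the passage routine once the kernel bounds are established. Combining the three steps yields the claimed limit.

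I expect the main obstacle to be the steepest-descent step. The difficulties are: (i) carrying out the saddle analysis uniformly in the scaling window $m = \alpha n + O(n^{1/3})$, including a careful choice of admissible contours that avoids the moving poles coming from the factorial (Gamma) factors in the discrete kernel; (ii) tracking all four blocks $K^{11}_{\mathrm{GOE}}, K^{12}_{\mathrm{GOE}}, K^{21}_{\mathrm{GOE}}, K^{22}_{\mathrm{GOE}}$ together with their secondary terms so that they assemble into precisely $\mathbf{K}_{\mathrm{GOE}}$ as normalised in \eqref{eq:KGOE11}--\eqref{eq:KGOE22} rather than some other (equivalent but differently presented) GOE kernel; and (iii) obtaining the uniform exponential tail estimates required for the Pfaffian version of dominated convergence, which in the non-bipartite setting are more delicate than in the determinantal case. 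The inputs from \cite{AC:20} are what make the exact discrete formula available; the asymptotics then follow the by-now-standard methodology for edge limits of Pfaffian point processes, as in the analyses leading to GOE Tracy--Widom limits in symmetric last-passage and half-space models.
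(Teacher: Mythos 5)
Your proposal follows essentially the same route as the paper: identify $\{X^{\mathbf{m}}_n \geq s\}$ as the gap event of the Pfaffian point process of \cref{prop:discrete}, write the gap probability as a Fredholm Pfaffian, rescale at $x = \alpha n - c_0 n^{1/3}\xi$, and obtain kernel convergence via a coalescing-saddle steepest-descent analysis (which is \cref{prop:kernelconv}) together with uniform exponential decay bounds (which is \cref{prop:kernelbounds}) to pass to the limit by dominated convergence. This matches the structure of the paper's proof of \cref{thm:thmtsscpp} and the supporting lemmas in \cref{subsec:asymptotics}.
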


The proof of \cref{thm:mainthm} is now immediate from \cref{thm:thmtsscpp} and \cref{prop:Zeil2}.
\begin{figure}
\begin{center}
\includegraphics[height=7cm]{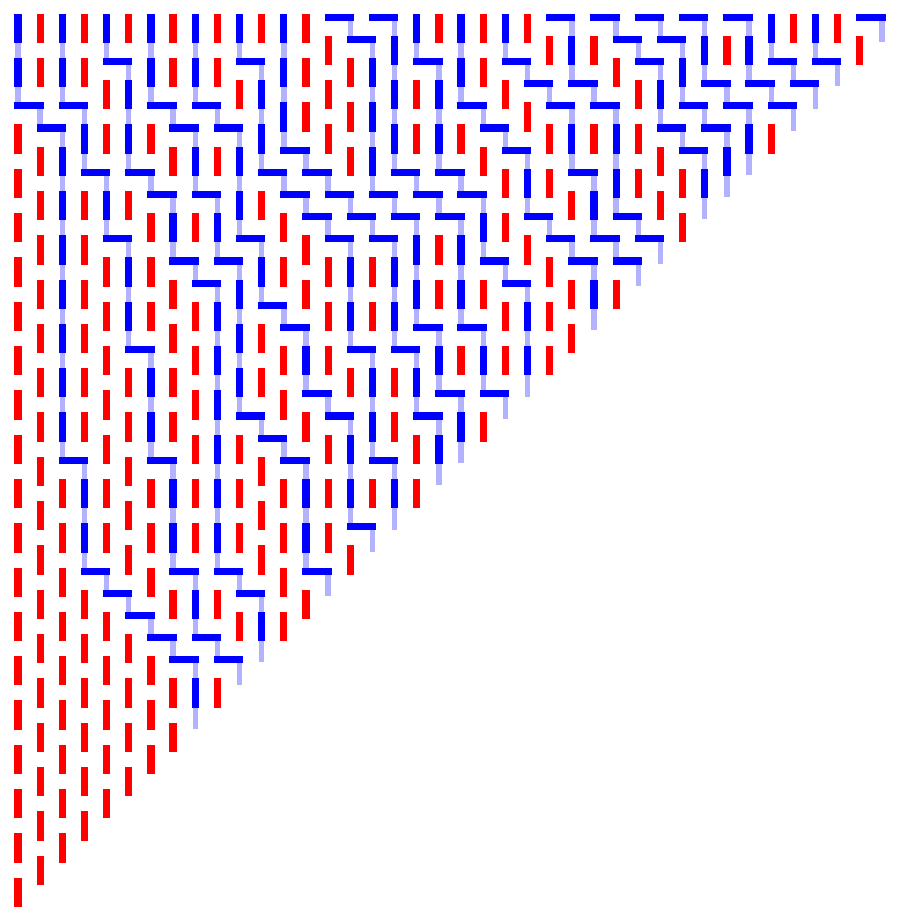}
    \caption{{A simulation of a uniformly random TSSCPP of size 20.  The quantity $X^{\mathbf{m}}_n$ marks the first gap in the red dimers on the bottom diagonal boundary and is equal to $9$ here. Due to an alternate description of \tsscpp{}s using a particular non-intersecting path ensemble~\cite{Andrews-1994}, one can think of this quantity as the hitting point of the leftmost path of this ensemble at a free boundary. We have drawn the paths which are determined by the blue dimers.  
    } }
\label{fig:tsscppsim}
\end{center}
\end{figure}

\section{{GOE Fluctuations for TSSCPPs}} 
\label{sec:asymptotics}

In this section, we provide the proof of \cref{thm:thmtsscpp}.  To do so, we introduce a Pfaffian point process which is determined by certain edges not being covered by dimers on the graph $G_n^{\mathbf{m}}$.  We then state two asymptotic results which allow us to prove \cref{thm:thmtsscpp}. Finally, we prove these two asymptotic results. Since some of the asymptotic computations considered here are relatively standard, where possible, we keep our exposition brief.  

Throughout the remainder of the paper,
we will use the following notation. 
For $a,m \in \mathbb{Z}$ with $m \geq 0$, we denote by $\Gamma_{a,a+1,\dots,a+m}$ a positively oriented contour containing the integers $a$ through to $a+m$ and no other integers.  In particular, $\Gamma_0$ is a positively oriented circle around the origin with radius less than 1.

\subsection{Pfaffian point process and formulas}
We define a simple point process on $\mathcal{L}_n =[0,{n-1}] \cap \mathbb{Z}$ using the dimer model on $G_n^{\mathbf{m}}$ in the following way:  associate a particle at $k \in \mathcal{L}_n$ if and only if the edge $((k,k+1),(k,k+2))$ in $G^\mathbf{m}_n$ is not covered by a dimer. Let $\{z_i\}$ denote this point process on $\mathcal{L}_n$. Note that we have not defined the point process on edges $((k,k+1),(k,k+2)) \in  G^\mathbf{m}_n$ for ${n} \leq k \leq 2n-1$. \cref{fig:pp} shows an example. 

\begin{figure}
\includegraphics[height=5cm]{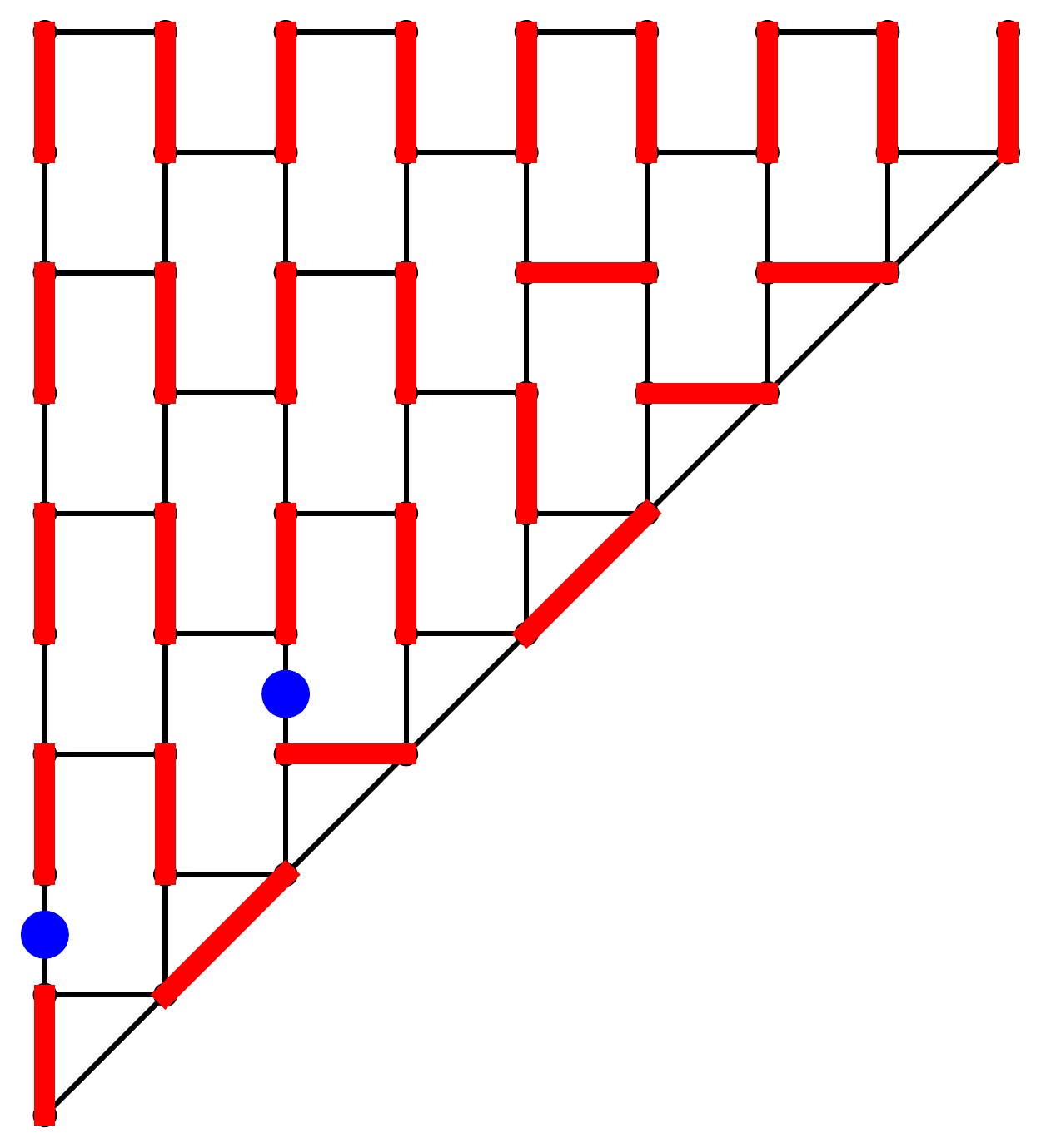}
\caption{The particle process where the blue dots denote the particles sitting on particular edges not covered by dimers. }
\label{fig:pp}
\end{figure}

 To describe this point process, we introduce the following formulas: {for $x \in \mathcal{L}_n$,} let
\begin{equation}\label{eq:Goverline}
\overline{G}(n,\ell,x)= \frac{1}{2 \pi \mathrm{i}} \int_{\Gamma'_{\{x\}}}\text{d}w \; G_{n,\ell,x}(w)
\end{equation}
where $\Gamma'_{\{x\}}= \Gamma_{0,1,\dots,\lfloor \frac{x}{2} \rfloor}$,
\begin{equation}\label{eq:Gnlx}
	G_{n,\ell,x}(w)= \frac{S(n-\ell,w-\ell,x-2\ell)}{{ (n-w+1)\prod_{j=\ell}^n(w-j)}} = G_{n-\ell,0,x-2\ell}(w-\ell)
\end{equation}
and 
\begin{equation}\label{eq:Snwx}
\begin{split}
S(n,w,x)&= \frac{\Gamma(n+w+2)\Gamma(2n-w+2)\Gamma(2n-2w+1)(3n-3w+2)}{\Gamma(n+w-x+1)\Gamma(3n-w+3)\Gamma(x-2w+1)} \\
\end{split}
\end{equation}
{To see the poles and zeros of $S(n,w,x)/\prod_{j=0}^n(w-j)$, notice that
\begin{equation}
\begin{split}\label{eq:Snwxpoles}
\frac{S(n,w,x)}{\prod_{j=0}^n(w-j)}&= (3n-3w+2) \frac{\prod_{j=0}^x (n+w+1-j) \prod_{j=0}^{2n-x-1}(2n-2w-j)}{\prod_{j=0}^n(3n-w+2-j)\prod_{j=0}^n (w-j)} \\
&=(3n-3w+2) \frac{\prod_{j=-n-1}^{x-n-1} (w-j) \prod_{j=-2n}^{-x-1}(-2w-j)}{\prod_{j=-3n-2}^{-2n-2}(-w-j)\prod_{j=0}^n (w-j)} \\
&=(3n-3w+2)(-1)^{n-x+1}2^{2n-x} \frac{\prod_{j=-n-1}^{x-n-1} (w-j) \prod_{j=x+1}^{2n}(w-j/2)}{\prod_{j=2n+2}^{3n+2}(w-j)\prod_{j=0}^n (w-j)}.
\end{split}
\end{equation}
}
Further, {for $x,y \in \mathcal{L}_n$,} let
\begin{equation}\label{eq:Hnlx}
\overline{H}(n,\ell,x)= \frac{1}{2 \pi \mathrm{i}} \int_{\Gamma'_{\{x\}}} \text{d} w \; G_{n,\ell,x}(w) \frac{n-w+1}{n+w-x},
\end{equation}
and define
\begin{equation}\label{eq:f11}
f^{1,1}(x,y)=\sum_{\ell=0}^\kappa\big( \overline{G}(n,\ell+1,x)\overline{G}(n,\ell,y)- \overline{G}(n,\ell,x)\overline{G}(n,\ell+1,y)\big),
\end{equation}
\begin{equation}\label{eq:f22}
\begin{split}
f^{2,2}(x,y)=& \mathrm{sgn}(x-y)+\sum_{\ell=0}^\kappa \sum_{m_1=0}^x
\sum_{m_2=0}^y \Big(\overline{H}(n,\ell+1,x-m_1)\overline{H}(n,\ell,y-m_2)\\ 
&- \overline{H}(n,\ell,x-m_1)\overline{H}(n,\ell+1,y-m_2) \Big) \\
&-(-1)^n\sum_{m=0}^x \overline{H}(n,0,x-m)
+(-1)^n\sum_{m=0}^x \overline{H}(n,0,y-m),
\end{split}
\end{equation}
\begin{equation}\label{eq:f12}
\begin{split}
f^{1,2}(x,y)=& (-1)^n \overline{G}(n,0,x)-\sum_{\ell=0}^\kappa\sum_{m=0}^y\Big( \overline{G}(n,\ell+1,x)\overline{H}(n,\ell,y-m)\\&- \overline{G}(n,\ell,x)\overline{H}(n,\ell+1,y-m)\Big),
\end{split}
\end{equation}
and
\begin{equation}\label{eq:f21}
f^{2,1}(x,y)=-f^{1,2}(y,x),
\end{equation}
with $\kappa$ taken to be such that 
\begin{equation}\label{eq:kappa}
\min \bigg(\bigg[\frac{x}{2}\bigg],\bigg[\frac{y}{2}\bigg]\bigg) <\kappa<\bigg[\frac{n+1}{2}\bigg].
\end{equation}
The exact value of $\kappa$ is unimportant since  $\overline{G}(n,\ell,x)=\overline{H}(n,\ell,x)=0$ for $\ell > \big[\frac{x}{2}\big]$, which follows from the fact that there are no poles of $G_{n,\ell,x}(w)$ inside $\Gamma_{\{x\}}'$ for $\ell>\big[\frac{x}{2}\big]$.   
We now have the following important result.

\begin{prop}
\label{prop:discrete}
The point processes $\{z_i\}$ is a Pfaffian point process with correlation kernel $\mathbf{f}:\mathcal{L}_n \times \mathcal{L}_n \to \mathbb{R}$  given by
\begin{equation}
\mathbf{f}(x,y)=\left( \begin{array}{cc}
f^{1,1}(x,y) & f^{1,2}(x,y) \\
f^{2,1}(x,y) & f^{2,2}(x,y) \end{array} \right)
\end{equation}
for $x,y \in \mathcal{L}_n$ with $f^{1,1},f^{2,2},f^{1,2},f^{2,1}$ given~\eqref{eq:f11},~\eqref{eq:f22},~\eqref{eq:f12}, and~\eqref{eq:f21}.
In other words, since $\{z_i\}$ is a discrete simple point process, we have 
\begin{equation}
\mathbb{P}^{\mathbf{m}}_n[\mbox{Particles at }x_1,\dots, x_m]=\mathrm{Pf}[\mathbf{f}(x_i,x_j)]_{1 \leq i,j \leq m},
\end{equation}
where $x_1,\dots, x_{{m}}$ are distinct points in $\mathcal{L}_n$.  
\end{prop}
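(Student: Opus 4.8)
The plan is to derive the Pfaffian structure of $\{z_i\}$ from the Pfaffian description of the TSSCPP dimer model on $G_n^{\mathbf{m}}$ obtained in \cite{AC:20}, and then to translate a \emph{presence}-of-dimers statistic into the \emph{absence}-of-dimers statistic that defines the particles. First I would recall from \cite{AC:20} that $G_n^{\mathbf{m}}$ carries a Kasteleyn matrix $\mathtt K$ whose inverse has entries expressible as a double contour integral, and that — since $G_n^{\mathbf{m}}$ is non-bipartite — the dimer correlations are Pfaffian: for distinct edges $e_1,\dots,e_p$ of $G_n^{\mathbf{m}}$,
\[
\mathbb{P}^{\mathbf{m}}_n[e_1,\dots,e_p\in M]=\pm\Big(\prod_{i}\mathtt K(e_i)\Big)\,\mathrm{Pf}\big[\mathtt K^{-1}(u,u')\big]_{u,u'},
\]
where $u,u'$ range over the $2p$ endpoints of $e_1,\dots,e_p$. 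Specializing to the vertical boundary edges $e_k=((k,k+1),(k,k+2))$ for $0\le k\le n-1$, whose absence from $M$ means a particle at $k\in\mathcal L_n$, gives the starting point.

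Next I would pass from ``present'' to ``absent'' by inclusion--exclusion,
\[
\mathbb{P}^{\mathbf{m}}_n[\text{particles at }x_1,\dots,x_m]=\sum_{S\subseteq\{x_1,\dots,x_m\}}(-1)^{|S|}\,\mathbb{P}^{\mathbf{m}}_n[e_s\in M,\ s\in S].
\]
Each summand is a principal Pfaffian minor of the kernel from the previous step, and the alternating sum of principal Pfaffian minors is again a Pfaffian — the Pfaffian analogue of $\sum_S(-1)^{|S|}\det A_S=\det(I-A)$ — of a modified $2\times2$ block kernel $\mathbf f$. The block index $\{1,2\}$ records the two half-edges of $e_k$; the sums $\sum_{\ell}$ in \eqref{eq:f11}--\eqref{eq:f12} emerge from expanding this Pfaffian resolvent along the left boundary of $G_n^{\mathbf{m}}$, while the sums $\sum_{m=0}^x$ come from the telescoping inherent in the inclusion--exclusion. (Alternatively one could route through the non-intersecting path ensemble for TSSCPPs with a free end and invoke de Bruijn's formula, but the dimer formulation of \cite{AC:20} is the most direct.)

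It then remains to evaluate the entries of $\mathbf f$ explicitly. Substituting the inverse Kasteleyn integrand of \cite{AC:20} and simplifying the resulting Gamma-function ratios produces the functions $\overline G(n,\ell,x)$ and $\overline H(n,\ell,x)$ of \eqref{eq:Goverline} and \eqref{eq:Hnlx}, with $G_{n,\ell,x}$ and $S(n,w,x)$ as in \eqref{eq:Gnlx}--\eqref{eq:Snwx}; the pole/zero bookkeeping recorded in \eqref{eq:Snwxpoles} is what justifies shrinking the contour to $\Gamma'_{\{x\}}=\Gamma_{0,1,\dots,\lfloor x/2\rfloor}$. Independence of $\kappa$ in the range \eqref{eq:kappa} is then immediate, since $G_{n,\ell,x}(w)$ has no poles inside $\Gamma'_{\{x\}}$ once $\ell>\lfloor x/2\rfloor$, so $\overline G(n,\ell,x)=\overline H(n,\ell,x)=0$ there. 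Finally I would verify the symmetries: $f^{1,1}$ and $f^{2,2}$ are antisymmetric (from the $x\leftrightarrow y$ antisymmetry of the summands and the $\mathrm{sgn}(x-y)$ term), and $f^{2,1}(x,y)=-f^{1,2}(y,x)$ by \eqref{eq:f21}, so $\mathbf f$ is a genuine antisymmetric kernel and the stated Pfaffian is well defined.

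The main obstacle is the residue calculus in the last step: collapsing the double contour integral for $\mathtt K^{-1}$ together with the two finite sums into the compact form $S(n,w,x)$ and confirming \eqref{eq:Snwxpoles}, all while correctly tracking the $(-1)^n$ prefactors and the $\mathrm{sgn}(x-y)$ diagonal correction in $f^{2,2}$. A secondary, more formal, difficulty is the presence-to-absence conversion at the level of Pfaffian kernels — one must ensure the modified kernel has \emph{exactly} the stated block shape and that the dimers forced elsewhere in $G_n^{\mathbf{m}}$ by the boundary geometry are absorbed correctly. I would therefore organize \cref{sec:derivation} so that the soft Pfaffian-algebra part is cleanly separated from the contour-integral bookkeeping.
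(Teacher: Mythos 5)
Your plan follows the paper's route exactly: invoke the Pfaffian local-statistics formula for the non-bipartite Kasteleyn matrix (\cref{thm:localstats}), pass from dimers present to dimers absent by inclusion--exclusion so that the kernel becomes $\mathbbm{J}-\mathbf{K}^{-1}$ restricted to the $2\times 2$ block of endpoints of each boundary edge (this is \cref{lem:discretefirststep}), and then substitute the explicit $K_n^{-1}$ entries from \cref{thm:oldmain} (= \cite[Theorem 3.2]{AC:20}) and simplify the contour integrals until the $\overline G$, $\overline H$ structure of~\eqref{eq:f11}--\eqref{eq:f21} emerges. The observation that $\overline G(n,\ell,x)=\overline H(n,\ell,x)=0$ for $\ell>\lfloor x/2\rfloor$, so $\kappa$ is immaterial, is also exactly the paper's justification.

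One heuristic in your sketch is misattributed, and it is worth correcting before you carry out the computation: the sums $\sum_{\ell=0}^\kappa$ in~\eqref{eq:f11}--\eqref{eq:f12} do not come from ``expanding a Pfaffian resolvent along the left boundary,'' and the sums $\sum_{m=0}^x$ do not come from the inclusion--exclusion step. Inclusion--exclusion contributes \emph{only} the $\mathbbm{I}_{x=y}$ term on the off-diagonal block. The $\sum_\ell$ come from the double sums over $\ell_1,\ell_2$ already present in the formulas for $t_n^{i,j}$ in \cref{thm:oldmain}, collapsed to a single index by the elementary $s$-contour identity
\begin{equation*}
\frac{1}{(2\pi\mathrm{i})^2}\int_{\Gamma_0}\!\text{d}s_1\int_{\Gamma_0}\!\text{d}s_2\,\frac{s_1-s_2}{(s_1s_2-1)s_1^{\ell_1+1}s_2^{\ell_2+1}}=\mathbbm{I}_{\ell_2=\ell_1+1}-\mathbbm{I}_{\ell_2=\ell_1-1}
\end{equation*}
(this is \cref{lem:sintegral}), while the $\sum_{m=0}^x$ arise purely from expanding the factor $1/(1-r)$ in the $r$-contour integrals as a finite geometric series, which is what turns a $t_n^{0,0}$-type term into a sum of $\overline H$'s. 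Since you do identify all three structural ingredients and the correct overall plan, this is a repairable imprecision rather than a gap, but it would derail the ``residue calculus in the last step'' that you correctly flag as the main technical burden if left unexamined.
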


The proof of \cref{prop:discrete} is given in \cref{sec:derivation}.

\subsection{Proof of \cref{thm:thmtsscpp}}

We have the following asymptotic result.

\begin{prop}
\label{prop:kernelconv}
Uniformly for $\xi,\eta$ in a compact set and $1 \leq i,j\leq 2$, 
\begin{equation}
\lim_{n \to \infty} (c_0 n^{\frac{1}{3}})^{4-i-j} f^{i,j}([\alpha n-c_0 n^{\frac{1}{3}} \xi],[\alpha n-c_0 n^{\frac{1}{3}} \eta ]) =K_{\mathrm{GOE}}^{ij}(\xi,\eta),
\end{equation}
where $\alpha$ and $c_0$ are defined in~\eqref{eq:scalings} and the formulas for $K_{\mathrm{GOE}}^{ij}$ are given in \cref{subsec:TWGOE}.
\end{prop}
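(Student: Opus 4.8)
The plan is to prove the convergence by a steepest-descent analysis of the two contour-integral building blocks $\overline{G}$ and $\overline{H}$ that enter the formulas \eqref{eq:f11}--\eqref{eq:f21}, and then to identify $\mathbf{K}_{\mathrm{GOE}}$ term by term once the sums over $\ell$ and over $m_1,m_2,m$ are recognised as Riemann sums. The scaling constants $\alpha,c_0,c_1$ in \eqref{eq:scalings} should all emerge from the local data of a single \emph{coalescing} (double) critical point, and it is precisely the coalescence of two saddles at $x/n\to\alpha$ that produces Airy rather than Gaussian behaviour. Concretely, using the shift identity $G_{n,\ell,x}(w)=G_{n-\ell,0,x-2\ell}(w-\ell)$ from \eqref{eq:Gnlx}, it suffices to understand $\overline{G}(N,0,X)$ and $\overline{H}(N,0,X)$ with $N=n-\ell$ and $X=x-2\ell$. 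Inserting Stirling's formula into \eqref{eq:Snwx}, and using the product form \eqref{eq:Snwxpoles} for the algebraic and the sign/power-of-two prefactors, writes the integrand $G_{N,0,X}(w)$ as $e^{N\Phi(w/N;\,X/N)}$ times a sub-exponential factor, where $\Phi$ is an explicit combination of $z\log z$-type terms; here the genuinely exponential factor $2^{2N-X}$ must be kept, since it participates in the saddle equation. With $x=[\alpha n-c_0 n^{1/3}\xi]$ and $\ell$ of order $n^{1/3}$ one has $X/N=\alpha+O(n^{-2/3})$, and at $X/N=\alpha$ the equation $\partial_w\Phi=0$ has a double root $w_c$; this coalescence condition pins $\alpha=2-\sqrt3$.

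Next I would deform the contour $\Gamma'_{\{x\}}=\Gamma_{0,1,\dots,\lfloor x/2\rfloor}$, which hugs a growing block of integer poles that accumulates right at the edge near $w_c$, onto a steepest-descent path through $w_c$, accounting for the residues in between in the standard way (for instance by writing the residue sum as an integral against $\pi\cot\pi w$, or by tracking the crossed residues). The local cubic expansion $\Phi(w)=\Phi(w_c)+\tfrac16\Phi'''(w_c)(w-w_c)^3+(\text{term linear in }\xi\text{ and the rescaled }\ell)\,(w-w_c)+\cdots$ with $w-w_c$ of order $n^{-1/3}$ then yields, after multiplication by an explicit constant times $n^{1/3}$, the convergence of $\overline{G}(n,\ell,x)$ to an Airy function (of an argument that is affine in $\xi$ and in the rescaled $\ell$), uniformly for $\xi$ in a compact set and for the rescaled $\ell$ in a compact set; the constants $c_0,c_1$ of \eqref{eq:scalings} are exactly the ones for which the two relevant linear coefficients take their normalised values. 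The extra factor $\tfrac{N-w+1}{N+w-X}$ present in $\overline{H}$ contributes a simple pole, whose residue, after the summations over $m_1,m_2,m$ in \eqref{eq:f22}--\eqref{eq:f12} are turned into integrals over a rescaled variable $\mu$, produces the \emph{integrated} Airy functions $\int_0^\infty\mathrm{Ai}(\,\cdot\,)\,d\mu$ occurring in $K^{12}_{\mathrm{GOE}}$ and $K^{22}_{\mathrm{GOE}}$. Since the Airy function decays super-polynomially, $\overline{G}$ and $\overline{H}$ are negligible once the rescaled $\ell$ or $\mu$ leaves a window of size $n^{1/3}$, which both legitimises replacing $\sum_\ell$ and $\sum_m$ by $\int_0^\infty d\lambda$ and $\int_0^\infty d\mu$ (dominated convergence) and supplies the claimed uniformity in $\xi,\eta$.

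It then remains to substitute these asymptotics into \eqref{eq:f11}, \eqref{eq:f22}, \eqref{eq:f12} and compare term by term with \eqref{eq:KGOE11}--\eqref{eq:KGOE22}. The antisymmetrised combinations $\overline{G}(n,\ell+1,x)\overline{G}(n,\ell,y)-\overline{G}(n,\ell,x)\overline{G}(n,\ell+1,y)$ are discrete Wronskians in $\ell$; since $\ell\mapsto\ell+1$ shifts the Airy argument by $O(n^{-1/3})$, these converge, after the normalisation $(c_0 n^{1/3})^{4-i-j}$, to the continuum Airy Wronskian, and together with $\sum_\ell\to\int_0^\infty d\lambda$ this reproduces $K^{11}_{\mathrm{GOE}}$ — the factor $\tfrac14$ being exactly the ratio between the $\ell$-increment and the induced increment of the Airy argument. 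The isolated term $(-1)^n\overline{G}(n,0,x)$ and the single $m$-sum of $\overline{H}(n,0,\cdot)$ produce the rank-one pieces $\tfrac12\mathrm{Ai}(\xi)$ and $\tfrac12\int_0^\infty\mathrm{Ai}(\,\cdot\,)d\mu$ of $K^{12}_{\mathrm{GOE}},K^{22}_{\mathrm{GOE}}$ (the $(-1)^n$ here cancelling the parity prefactors inherited from \eqref{eq:Snwxpoles}, so the limit is parity-independent), while the $\mathrm{sgn}(x-y)$ term in \eqref{eq:f22} survives, up to the orientation of the change of variables, as the $-\mathrm{sgn}(\xi-\eta)$ term of $K^{22}_{\mathrm{GOE}}$.

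I expect the main obstacle to be the content of the second paragraph: establishing the building-block asymptotics \emph{uniformly} in the parameters while deforming the $x$-dependent contour $\Gamma'_{\{x\}}$ — which lies directly on top of the coalescing saddle — onto a steepest-descent path, correctly handling the crossed residues and the interplay between the lattice of poles and the merging saddles, and at the same time extracting the precise constants $\alpha,c_0,c_1$ from the cubic expansion of $\Phi$ and tracking the $(-1)^N$ and $2^{2N-X}$ prefactors so that they cancel against the $(-1)^n$ factors in \eqref{eq:f22}--\eqref{eq:f12}. The remaining manipulations, though lengthy, are of the ``relatively standard'' type referred to in the text.
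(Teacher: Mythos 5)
Your overall strategy matches the paper's: a saddle-point analysis of $\overline{G}$ and $\overline{H}$ built around a coalescing double critical point at $x/n\to\alpha$, preceded by the shift identity $G_{n,\ell,x}(w)=G_{n-\ell,0,x-2\ell}(w-\ell)$ to reduce to the $\ell=0$ case, followed by Riemann-sum convergence of the $\ell$- and $m$-sums, dominated convergence justified by exponential decay of the building blocks, and term-by-term matching against $K^{ij}_{\mathrm{GOE}}$. This is precisely the route the paper takes through Lemmas~\ref{lem:changeofvariables}--\ref{lem:estimates} and the Claim inside the proof. However, two details of your sketch are off.

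First, the ``main obstacle'' you flag --- deforming $\Gamma'_{\{x\}}$ to a steepest-descent path while ``accounting for the residues in between,'' say via $\pi\cot\pi w$ --- is not an obstacle at all, and the worry reflects a misreading of the pole structure. After rescaling $w\mapsto wn$, the poles of $G_{n,\ell,x}$ enclosed by $\Gamma'_{\{x\}}$ sit on $\{0,\tfrac1n,\dots,\tfrac1n\lfloor x/2\rfloor\}\subset[0,\alpha/2]$, and, as one sees from~\eqref{eq:Snwxpoles}, the only other poles are near $w\in[2,3]$. The steepest-descent loop $\gamma_\alpha$ runs from $w_+(\alpha)<0$ to $\alpha$ and encloses $[0,\alpha/2]$ but stays well away from $[2,3]$, so the deformation crosses no poles and there is no residue bookkeeping to do (see the sentence in the proof of Lemma~\ref{lem:estimates}: ``no poles are passed under this deformation''). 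Second, the claim that the extra factor $\tfrac{N-w+1}{N+w-X}$ in $\overline{H}$ ``contributes a simple pole, whose residue \dots produces the integrated Airy functions'' has the mechanism wrong. That factor is \emph{regular} at the critical point --- indeed $w_+(\alpha)+1-\alpha>0$ is precisely what is checked in the proof of Lemma~\ref{lem:steepestdescentcontour} --- and under the Airy rescaling it simply contributes the multiplicative constant $2$ distinguishing the $\overline{H}$ asymptotics from the $\overline{G}$ asymptotics (compare \eqref{propproof:overlineGasymp} with \eqref{propproof:overlineHasymp}). The integrated Airy pieces $\int_0^\infty\mathrm{Ai}(\cdot)\,d\mu$ in $K^{12}_{\mathrm{GOE}}$ and $K^{22}_{\mathrm{GOE}}$ come entirely from converting the finite $m$-sums in \eqref{eq:f22} and \eqref{eq:f12} into $\mu$-integrals, which you do also mention but misattribute to a residue. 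With these two corrections, your outline tracks the paper's proof.
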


We also need the following bounds.

\begin{prop} 
\label{prop:kernelbounds}
Fix $M>0$. For $\xi,\eta {<}-M$, there exists constants $C,c>0$ that depend on $M$ such that
$$(c_0 n^{\frac{1}{3}})^2f^{1,1}([\alpha n-c_0 n^{\frac{1}{3}} \xi],[\alpha n-c_0 n^{\frac{1}{3}} \eta ]) \leq C e^{-c(\xi+\eta)},$$
$$c_0 n^{\frac{1}{3}}f^{1,2}([\alpha n-c_0 n^{\frac{1}{3}} \xi],[\alpha n-c_0 n^{\frac{1}{3}} \eta ]) \leq C e^{-c\xi},$$
and
$$f^{2,2}([\alpha n-c_0 n^{\frac{1}{3}} \xi],[\alpha n-c_0 n^{\frac{1}{3}} \eta ]) \leq C. $$
\end{prop}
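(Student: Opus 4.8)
We want to bound the rescaled kernel entries $f^{i,j}$ uniformly for $\xi,\eta < -M$, i.e. in the region where the evaluation points $[\alpha n - c_0 n^{1/3}\xi]$, $[\alpha n - c_0 n^{1/3}\eta]$ move \emph{above} the critical value $\alpha n$. These bounds are exactly what is needed to upgrade the pointwise kernel convergence of \cref{prop:kernelconv} to convergence of the Fredholm Pfaffian $\mathrm{Pf}(\mathbbm J - \mathbf f)_{L^2(\mathcal L_n \cap [\text{level set}],\infty)} \to F_1(s)$ via dominated convergence on the Pfaffian series. The strategy mirrors the pointwise asymptotics: each $f^{i,j}$ is built from the building blocks $\overline G(n,\ell,x)$ and $\overline H(n,\ell,x)$, which are residue sums / contour integrals of $G_{n,\ell,x}(w)$, and we already have (from \cref{sec:derivation}, assumed) a steepest-descent representation $G_{n,\ell,x}(w) = \exp(n\, \phi(w/n; \cdot) + \text{lower order})$. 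The idea is that for $x$ in the regime $\alpha n - c_0 n^{1/3}\xi$ with $\xi<-M$, the exponent picks up an extra factor decaying like $e^{-c|\xi|}$ coming from the Airy-type tail of $\mathrm{Ai}$ — precisely the $e^{-c\xi}$ and $e^{-c(\xi+\eta)}$ in the statement — while the $f^{2,2}$ block, whose leading behaviour is the bounded piece $-\mathrm{sgn}(x-y)$ plus an integral that is uniformly $O(1)$, only needs to be shown bounded.

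\textbf{Key steps, in order.} First, I would record a deformed-contour estimate for $\overline G(n,\ell,x)$: deform $\Gamma'_{\{x\}}$ to pass through the relevant saddle point $w_c$ of $\phi$, picking up the relevant residues, and bound $|G_{n,\ell,x}(w)|$ on the deformed contour. For $x = \alpha n - c_0 n^{1/3}\xi$ with $\xi<-M$, a Taylor expansion of the exponent around the $\xi = 0$ saddle shows $\mathrm{Re}\,\phi$ decreases by an amount of order $n^{-2/3}(-\xi)^{3/2}\cdot n = (-\xi)^{3/2}$, so $|\overline G| \le C (c_0 n^{1/3})^{-1} e^{-c(-\xi)^{3/2}} \le C (c_0 n^{1/3})^{-1} e^{c\xi}$ after absorbing the superlinear decay into a linear one on $\{\xi < -M\}$ (this is the standard ``$e^{-c x^{3/2}} \le C_M e^{-cx}$ for $x>M$'' trick, harmless for an upper bound). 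Second, the identical argument applies to $\overline H(n,\ell,x-m_1)$ once one checks the extra rational factor $(n-w+1)/(n+w-x)$ and the sum over $m_1 \le x = O(n)$ are at most polynomially large, hence absorbed into the constant (or into a slightly smaller $c$). Third, assemble: $f^{1,1}$ is a $\kappa+1 \le O(n)$-term sum of products $\overline G \cdot \overline G$, each bounded by $C(c_0n^{1/3})^{-2} e^{c(\xi+\eta)}$ times a polynomial factor that the exponential kills, giving the first bound after multiplying by $(c_0 n^{1/3})^2$; $f^{1,2}$ similarly, noting the leading term $(-1)^n \overline G(n,0,x)$ already carries the $e^{c\xi}$ decay; and $f^{2,2}$: here the double sum over $m_1\le x$, $m_2 \le y$ and over $\ell$ would naively be too large, so one must use that $\sum_{m=0}^x \overline H(n,\ell,x-m)$ is a \emph{telescoping / summed} quantity whose value is $O(1)$ (this is the discrete analogue of the $\int_0^\infty \mathrm{Ai}$ terms and the $\mathrm{sgn}$ in $K^{22}_{\mathrm{GOE}}$), together with the explicit bounded term $\mathrm{sgn}(x-y)$.

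\textbf{Main obstacle.} The hard part is the $f^{2,2}$ bound. Unlike $f^{1,1}$ and $f^{1,2}$, it has no prefactor power of $n$ to spend, it contains a double lattice sum of length $O(n^2)$, and its limit $K^{22}_{\mathrm{GOE}}$ is itself only bounded (not decaying) in the region $\xi,\eta \to -\infty$ — so there is no room for slack. One must resum the $m_1,m_2$ sums \emph{before} estimating, recognizing $\sum_{m\le x}\overline H(n,\ell,x-m)$ as (a contour integral of) $G_{n,\ell,x}(w)/(1-w^{-1}\cdots)$ or some such closed form, and then show that this resummed object is uniformly $O(1)$ including cancellation between the $\overline H(n,\ell+1,\cdot)\overline H(n,\ell,\cdot)$ and $\overline H(n,\ell,\cdot)\overline H(n,\ell+1,\cdot)$ antisymmetric pair and between the two single sums $\pm(-1)^n\sum_m \overline H(n,0,\cdot)$. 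Getting a contour/endpoint estimate that survives uniformly over $\ell \le \kappa = O(n)$ and over the whole half-line $\xi,\eta < -M$ is the delicate point; everything else is a routine, if lengthy, steepest-descent bookkeeping exercise parallel to the proof of \cref{prop:kernelconv}.
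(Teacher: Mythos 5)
Your overall strategy---steepest descent bounds for $\overline{G}$ and $\overline{H}$ followed by summation bookkeeping---is the paper's strategy. The paper's written proof is extremely terse (``follows immediately from \cref{lem:changeofvariables} and \cref{lem:estimates}; note that $f^{2,2}$ is bounded by a constant due to the sign term''), and the real content is split between \cref{lem:estimates}, which gives $|\overline{G}(m,0,\alpha m-vm^{1/3})|,|\overline{H}(m,0,\alpha m-vm^{1/3})|\le Ce^{-cv}$ by deforming to the \emph{moving} saddle $w_+(a)$ with $a=\alpha - vm^{-2/3}$ and showing $g(a)=S_1(w_+(a),a)<0$ is monotone, and the Claim inside the proof of \cref{prop:kernelconv}, which gives the uniform Airy-limit asymptotics with the explicit $r$-dependent prefactor $(1-9c_1 r\omega n^{-1/3})$ for $\lambda,\xi,\mu\le n^\delta$. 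The paper's \cref{lem:changeofvariables} is the device that converts the $\ell$- and $m$-sums into shifted $v$-variables (effective $v$ grows linearly in $\ell$ and $m$ at rate $\sim n^{-1/3}$), which is what makes those sums geometrically summable; this is the mechanism you should spell out rather than the heuristic ``polynomial factor that the exponential kills.''

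Where you are genuinely on to something, but with the emphasis in the wrong place, is the cancellation. You flag the antisymmetric pair $\overline{H}(\ell+1,\cdot)\overline{H}(\ell,\cdot)-\overline{H}(\ell,\cdot)\overline{H}(\ell+1,\cdot)$ for $f^{2,2}$ as the delicate point, and that is right: naively each $\overline{H}$ is $O(n^{-1/3})$, the $\ell$- and $m$-sums each contribute $O(n^{1/3})$, and without the extra $O(n^{-1/3})$ gain from the antisymmetric pair one only gets an $O(n^{1/3})$ bound for $f^{2,2}$, not $O(1)$. But the same gain is needed for $f^{1,1}$ and $f^{1,2}$ too, since $f^{1,1}\sim n^{-2/3}$ while the naive term-by-term bound gives $n^{-1/3}$ and $f^{1,2}\sim n^{-1/3}$ while the naive bound gives $n^0$; your assembly paragraph treats these cases as routine, which they are not. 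In the paper this $n^{-1/3}$ gain is read off from the $(1-9c_1 r\omega n^{-1/3})$ factor in the Claim (so the pair becomes $9c_1(\omega_1-\omega_2)n^{-1/3}$), for the range $\ell,m\le n^{1/3+\delta}$, while the far tails are killed outright by \cref{lem:estimates} after the shift \cref{lem:changeofvariables}. Your proposed fix---resumming the $m_1,m_2$ sums into closed-form contour integrals before estimating, and hoping for cancellation between the two single-sum terms---is a genuinely different route from the paper's, and in my judgement a harder one: the two single sums are evaluated at $x$ and $y$ respectively and do not cancel, and the resummed contour integrals would still require you to exhibit the $\ell$-direction antisymmetric gain, which is the crux. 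So I would call your attention to: (i) use the moving saddle throughout rather than a fixed $\xi=0$ saddle, since for $\xi\sim n^{2/3}$ the fixed-saddle Taylor expansion is uncontrolled; (ii) make the exponential-in-$\ell$ gain from \cref{lem:changeofvariables} explicit; and (iii) exhibit the $n^{-1/3}$ antisymmetric-pair gain for \emph{all three} kernel entries, ideally by extending the Claim asymptotics so they hold uniformly in the range where \cref{lem:estimates} alone is insufficient, rather than via resummation. With those corrections your argument becomes essentially the paper's.
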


Both results are proved in \cref{subsec:asymptotics}.  We now prove \cref{thm:thmtsscpp}. 

\begin{proof}[Proof of \cref{thm:thmtsscpp}]
We first use inclusion-exclusion to write that 
	\begin{equation}
		\mathbb{P}^{\mathbf{m}}_n[X_n^{\mathbf{m}} \geq s]=\mathbb{P}^{\mathbf{m}}_n[\mbox{No particles in $[0,s)$}]=\mathrm{Pf}[\mathbb{J}-\mathbf{f}]_{\ell^2(0,s)}
	\end{equation}
We reverse the coordinates by considering $x \in \mathcal{L}_n$ to be of the form $x=\alpha n - c_0 n^{\frac{1}{3}} \xi$ where $\xi$ is our new variable.  
The bounds given in \cref{prop:kernelbounds} are precisely those needed to give an integrable upper bound in the expansion of the Fredholm Pfaffian; see for instance \cite[Lemma 2.5]{BBCS:18}. This allows us to use the dominated convergence theorem to pass limits through the integrals and use \cref{prop:kernelconv}, which then shows convergence to the GOE Tracy--Widom distribution.  
\end{proof}

\subsection{Asymptotics}\label{subsec:asymptotics}

In this section, we give the proofs of \cref{prop:kernelconv} and \cref{prop:kernelbounds}.  Before doing so, we will need some preliminary results.  {Throughout, we will use that the logarithm takes its branch cut on the negative axis.}

For $a \in [0,\alpha], X,\lambda \in \mathbb{R}, r\in \{0,1\},w \in \mathbb{C}$, introduce the functions
\begin{equation}\label{eq:S1}
\begin{split}
	S_1(w,a)=& (1+w) \log (1+w) +(2-w)\log (2-w) +(2-2w)\log(2-2w) \\
	&-w \log (-w) -(1-w)\log(1-w) -(1+w-a)\log(1+w-a)\\
	 &-(3-w)\log (3-w)-(a-2w)\log(a-2w),
\end{split}
\end{equation}
\begin{equation}\label{eq:S2}
\begin{split}
S_2(w;a,\lambda,X)=& -X\log(1+w-a)+X\log(a-2w) -2\lambda \log(1+w) \\
& +2\lambda \log(3-w) -\lambda \log(2-w) + \lambda \log (-w),
\end{split}
\end{equation}
and
\begin{equation}\label{eq:S3}
\begin{split}
S_3(w;a,r)=& 2\log(1+w) +2 \log(2-w) +\log (2-2w)-\log(1+w-a) \\
&-3 \log (3-w) -\log(a-2w)-\log(1-w)-1-2r \log (1+w)\\
& {+}2r \log (3-w) -r \log(2-w)+r \log(-w).
\end{split}
\end{equation}
Let 
\begin{equation}
	\begin{split}\label{eq:An}
&A_n(w,a,\lambda,X,r)=n \frac{3-3w+\frac{2}{n}}{1-w+\frac{1}{n}} 
\Bigg[ \frac{(1+w-\frac{an-Xn^{\frac{1}{3}}}{n}+\frac{1}{n})}{(1+w-\frac{2(\lambda n^{\frac{1}{3}}+r)}{n}+\frac{2}{n})}
\\
&\times  \frac{(3-\frac{2(\lambda n^{\frac{1}{3}}+r)}{n}-w+\frac{3}{n})(\frac{an-Xn^{\frac{1}{3}}}{n}-2w+\frac{1}{n})(1+\frac{1}{n}-w)}
{(2-\frac{(\lambda n^{\frac{1}{3}}+r)}{n}-w+\frac{2}{n})(2-2w+\frac{1}{n})(\frac{\lambda n^{\frac{1}{3}}+r}{n}-w)}\Bigg]^{\frac{1}{2}},
\end{split}
\end{equation}
and take
\begin{equation}\label{eq:Stilde}
\tilde{S}_n(w,a,\lambda,X,r)=S_1(w,a)+n^{-\frac{2}{3}} S_2(w,a,\lambda,X)+n^{-1}S_3(w,a,r)-\frac{\log n}{n}+\frac{1}{n}.
\end{equation}

The following lemma heavily simplifies our computations below. 

\begin{lem}\label{lem:changeofvariables}
\begin{equation}
\overline{G}(n,\ell,x)=\overline{G}(n-\ell,0,x-2\ell)
\end{equation}
and
\begin{equation}
\overline{H}(n,\ell,x)=\overline{H}(n-\ell,0,x-2\ell)
\end{equation}

\end{lem}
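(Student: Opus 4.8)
The plan is to prove Lemma~\ref{lem:changeofvariables} by directly unwinding the definitions~\eqref{eq:Goverline}, \eqref{eq:Gnlx}, \eqref{eq:Hnlx} and performing the change of variables $w \mapsto w + \ell$ in the contour integrals. The key algebraic input is already recorded in~\eqref{eq:Gnlx}: the integrand satisfies the shift identity
$$
G_{n,\ell,x}(w) = G_{n-\ell,0,x-2\ell}(w-\ell).
$$
So the only thing left to check is that the change of variables $w = w' + \ell$ carries the contour $\Gamma'_{\{x\}} = \Gamma_{0,1,\dots,\lfloor x/2\rfloor}$ to the contour $\Gamma'_{\{x-2\ell\}} = \Gamma_{0,1,\dots,\lfloor (x-2\ell)/2\rfloor}$ appearing in the definition of $\overline{G}(n-\ell,0,x-2\ell)$, up to deformations that cross no poles of the integrand.

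First I would treat $\overline{G}$. Substituting $w = w' + \ell$ gives
$$
\overline{G}(n,\ell,x) = \frac{1}{2\pi\mathrm{i}} \int_{\Gamma'_{\{x\}} - \ell} \text{d}w' \; G_{n-\ell,0,x-2\ell}(w'),
$$
where $\Gamma'_{\{x\}} - \ell$ denotes the translated contour, which is a positively oriented curve enclosing exactly the integers $-\ell, -\ell+1, \dots, \lfloor x/2\rfloor - \ell$ and no others. Since $\lfloor x/2\rfloor - \ell = \lfloor (x-2\ell)/2 \rfloor$, this contour encloses all of $0,1,\dots,\lfloor(x-2\ell)/2\rfloor$ together with the negative integers $-\ell,\dots,-1$. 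The point is that $G_{n-\ell,0,x-2\ell}(w')$ has \emph{no poles} at the negative integers: from the pole/zero analysis in~\eqref{eq:Snwxpoles} (applied with $n$ replaced by $n-\ell$ and $x$ by $x-2\ell$), together with the denominator factor $\prod_{j=0}^{n-\ell}(w'-j)$ appearing in $G_{n-\ell,0,x-2\ell}$ via~\eqref{eq:Gnlx}, the poles of the integrand lie only among the nonnegative integers (and possibly the large integers $2(n-\ell)+2,\dots$, which lie outside all these contours). Hence we may shrink $\Gamma'_{\{x\}} - \ell$ across the points $-\ell,\dots,-1$ without changing the integral, obtaining exactly $\Gamma'_{\{x-2\ell\}}$, which proves the first identity. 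The second identity for $\overline{H}$ is identical: the extra factor $\tfrac{n-w+1}{n+w-x}$ in~\eqref{eq:Hnlx} transforms under $w = w'+\ell$ into $\tfrac{(n-\ell)-w'+1}{(n-\ell)+w'-(x-2\ell)}$, which is precisely the corresponding factor for $\overline{H}(n-\ell,0,x-2\ell)$, and this rational factor introduces no poles at the negative integers either, so the same contour-deformation argument applies verbatim.

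The only genuine subtlety — and thus the main obstacle — is bookkeeping the poles carefully enough to justify the contour deformation: one must confirm that the translated contour $\Gamma'_{\{x\}} - \ell$ does not pick up the pole at $w' = 0$ coming from $\prod_{j=0}^{n-\ell}(w'-j)$ in a way that differs from $\Gamma'_{\{x-2\ell\}}$ (it does not, since $0$ is enclosed by both), and that no pole of the numerator data in~\eqref{eq:Snwxpoles} sits at a negative integer after the shift. Both are routine given the explicit factorization in~\eqref{eq:Snwxpoles}, so after that verification the lemma follows. (One should also note the edge case $\ell > \lfloor x/2 \rfloor$, where $\Gamma'_{\{x\}}$ encloses no poles and both sides vanish, consistent with the remark following~\eqref{eq:kappa}.)
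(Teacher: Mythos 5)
Your proof is correct and takes essentially the same change-of-variables approach as the paper's short proof. The added care you take — checking via the factorization \eqref{eq:Snwxpoles} that $G_{n-\ell,0,x-2\ell}$ has no poles at the negative integers $-\ell,\dots,-1$ that are swept across when deforming $\Gamma'_{\{x\}}-\ell$ to $\Gamma'_{\{x-2\ell\}}$ — is precisely the bookkeeping the paper leaves implicit when it asserts that $\Gamma'_{\{x\}}$ is carried to $\Gamma'_{\{x-2\ell\}}$ under the substitution.
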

\begin{proof}
The result follows immediately for both equations by taking the change of variables $w\mapsto w+\ell$ in both~\eqref{eq:Gnlx} and~\eqref{eq:Hnlx} as $G_{n,\ell,x}(w)$   and the contour $\Gamma_{\{x\}}'$
get mapped to $G_{n-\ell,0,x-2\ell}(w)$ and  $\Gamma_{\{x-2\ell\}}'$ respectively under this change of variables.
\end{proof}

The next lemma gives a clearer form for the asymptotics of $\overline{G}$ and $\overline{H}$. 

\begin{lem}\label{lem:startingasymp}
For $a \in [0,\alpha], X,\lambda \in \mathbb{R}, r\in \{0,1\}$ 
\begin{equation}\label{lemeq:startingasympG}
\begin{split}
\overline{G}(n, & [\lambda n^{\frac{1}{3}}] +r, [an-Xn^{\frac{1}{3}}])\\
&=\frac{(-1)^{n-[\lambda n^{\frac{1}{3}}] +r+1}}{2\pi \mathrm{i}} \int \text{d} w \; A_n(w,a,\lambda,X,r)e^{n\big(\tilde{S}_n(w,a,\lambda,X,r)+O\big(\frac{1}{n^2}\big)\big)},
	\end{split}
\end{equation}
and
\begin{equation}\label{lemeq:startingasympH}
\begin{split}
	&\overline{H}(n,[\lambda n^{\frac{1}{3}}] +r,[an-Xn^{\frac{1}{3}}])
	=\frac{(-1)^{n-[\lambda n^{\frac{1}{3}}] +r+1}}{2\pi \mathrm{i}} \\
&\times \int \text{d} w \, A_n(w,a,\lambda,X,r)e^{n\big(\tilde{S}_n(w,a,\lambda,X,r)+O\big(\frac{1}{n^2}\big)\big)}\frac{n-nw+1}{n+nw-an-Xn^{\frac{1}{3}}}, 
\end{split}
\end{equation}
	where in both equations the contour is positively oriented surrounding the  points $\{0,\frac{1}{n},\frac{2}{n},\dots, \frac{1}{n}\big[\frac{1}{2}(an-Xn^{\frac{1}{3}}\big]\}$ and no other integer points divided by $n$, and $\tilde{S}_n$ and $A_n$ are defined in~\eqref{eq:S1} and~\eqref{eq:An} respectively.
\end{lem}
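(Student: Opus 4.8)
The plan is to start from the exact integral representations \eqref{eq:Goverline} and \eqref{eq:Hnlx} for $\overline{G}$ and $\overline{H}$, apply the change of variables from \cref{lem:changeofvariables} so that only the base cases $\overline{G}(n-\ell,0,\cdot)$ and $\overline{H}(n-\ell,0,\cdot)$ need to be analyzed, and then substitute the asymptotic scalings $\ell = [\lambda n^{1/3}]+r$ and $x = [an - Xn^{1/3}]$ directly into $G_{n,\ell,x}(w)$. The integrand is a ratio of Gamma functions (through $S(n,w,x)$, see \eqref{eq:Snwx}) together with the factor $1/((n-w+1)\prod_{j=\ell}^n(w-j))$; the first step is therefore to rewrite all Gamma factors using the reflection/recursion identities exactly as in \eqref{eq:Snwxpoles}, so that poles and zeros are explicit and the contour $\Gamma'_{\{x\}}$ can be understood after the rescaling of $w$.

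Next I would rescale the integration variable by $w \mapsto nw$, which converts the product $\Gamma$-ratio into an exponential of $n$ times a sum of terms of the form $(\text{linear in } w)\log(\text{linear in } w)$ via Stirling's formula $\log\Gamma(nz+c) = nz\log(nz) - nz + \tfrac12\log(2\pi/(nz)) + c\log(nz) + O(1/n)$. Grouping the leading $O(n)$ terms gives $S_1(w,a)$ in \eqref{eq:S1}; the terms of order $n^{1/3}$ (coming from the $-Xn^{1/3}$ and $\lambda n^{1/3}$ corrections to the arguments) give $n\cdot n^{-2/3}S_2$ as in \eqref{eq:S2}; the order $1$ terms give $n\cdot n^{-1}S_3$ as in \eqref{eq:S3}, where the integer shifts $r$, the $+1$'s, and the $\tfrac12\log$ prefactors from Stirling all get collected; and the remaining $\tfrac12\log$ Stirling prefactors that depend on $w$ assemble into the algebraic prefactor $A_n(w,a,\lambda,X,r)$ of \eqref{eq:An}. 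The $-\tfrac{\log n}{n} + \tfrac1n$ in \eqref{eq:Stilde} absorbs the $n$-dependent-but-$w$-independent constants (in particular the $(3n-3w+2)$ factor contributes a $3n$ whose leading behavior is folded into $A_n$ and whose lower-order part lands in $\tilde S_n$), and the $O(1/n^2)$ in the exponent is the uniform Stirling remainder on the rescaled contour, valid because after rescaling the contour stays in a fixed compact region bounded away from the branch points $w \in \{0, 1, 2, 1/2(1-w)\text{-type singularities}\}$ for $a \in [0,\alpha]$. Finally, the factor $\tfrac{n-w+1}{n+w-x}$ in the definition \eqref{eq:Hnlx} of $\overline H$ becomes, after rescaling, $\tfrac{n - nw + 1}{n + nw - an - Xn^{1/3}}$, which is exactly the extra factor appearing in \eqref{lemeq:startingasympH}; this is why the two formulas differ only by that rational factor. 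The sign $(-1)^{n-[\lambda n^{1/3}]+r+1}$ comes from the $(-1)^{n-x+1}$ in \eqref{eq:Snwxpoles} evaluated at $x$ rescaled, combined with the $2^{2n-x}$ factor there which is real and positive and gets absorbed into $A_n$.

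The main obstacle I expect is bookkeeping the Stirling expansion to three orders ($n$, $n^{1/3}$, $1$) simultaneously and uniformly in $w$ on the rescaled contour, while correctly tracking which pieces go into $S_2$, $S_3$, $A_n$, and the scalar tail in $\tilde S_n$; in particular one must be careful that the arguments of all logarithms stay in the cut plane (branch cut on $(-\infty,0]$, as stipulated) throughout the relevant contour, which forces the restriction $a\in[0,\alpha]$ and dictates the precise shape of the contour around $\{0,1/n,\dots\}$. A secondary point is justifying that the discrete contour $\Gamma'_{\{x\}} = \Gamma_{0,1,\dots,\lfloor x/2\rfloor}$ rescales to a contour enclosing exactly $\{0,1/n,\dots,\tfrac1n[\tfrac12(an-Xn^{1/3})]\}$ and no other lattice-point/$n$; this is immediate from the definition but should be stated. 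Once these are in place, the lemma follows by directly reading off the three pieces of the expansion, and the proof is essentially a (careful) computation, so I would present it as such rather than belaboring each logarithm.
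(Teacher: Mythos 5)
Your overall plan---rescale $w\mapsto nw$, apply Stirling, sort the resulting expansion into orders $n$, $n^{1/3}$, and $1$ to produce $S_1$, $S_2$, $S_3$, and $A_n$, and note that $\overline H$ differs from $\overline G$ by the extra rational factor $\frac{n-nw+1}{n+nw-an-Xn^{1/3}}$---is the right skeleton and is essentially what the paper does. However, your account of the sign $(-1)^{n-[\lambda n^{1/3}]+r+1}$ is wrong, and that is a genuine gap, not a cosmetic one. You attribute it to the factor $(-1)^{n-x+1}$ in \eqref{eq:Snwxpoles}, but that sign is $(-1)^{n-x+1}$ with $x=[an-Xn^{1/3}]$, which depends on $n$ and $X$ and has nothing to do with $\lambda$ or $r$; it is not equal to $(-1)^{n-\ell+1}$ with $\ell=[\lambda n^{1/3}]+r$. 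Moreover, \eqref{eq:Snwxpoles} is written only for the $\ell=0$ case and is introduced in the paper purely to exhibit pole/zero structure, not as a device for the Stirling expansion.

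The correct source of the sign, and the key preliminary identity you are missing, is the rewriting of the denominator product as a ratio of Gamma functions before applying Stirling:
\begin{equation*}
\frac{1}{\prod_{j=\ell}^{n}(w-j)}=(-1)^{n-\ell+1}\,\frac{1}{\prod_{j=0}^{n-\ell}(\ell+j-w)}=(-1)^{n-\ell+1}\,\frac{\Gamma(\ell-w)}{\Gamma(n+1-w)},
\end{equation*}
the $(-1)^{n-\ell+1}$ arising because the product has exactly $n-\ell+1$ factors each of whose sign is reversed. This step is what makes the entire integrand a single ratio of Gamma functions (times the explicit $(3n-3w+2)$ and $(n-w+1)$), so that one Stirling pass suffices; it also cleanly puts the sign in the form the lemma asserts. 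You should insert this identity before the change of variables $w\mapsto nw$ and drop the detour through \eqref{eq:Snwxpoles} entirely---the reflection-style rewriting there is not needed once the denominator is in Gamma form, and invoking it introduces a sign that cancels incorrectly in your version. A secondary point: passing first through \cref{lem:changeofvariables} to reduce to $\ell=0$ is an unnecessary layer of bookkeeping; the Gamma-ratio rewriting above handles general $\ell$ directly and keeps the parameters $n$, $\ell$, $x$ visible, which is what you need in order to read off the correct splitting into $S_1$, $S_2$, $S_3$ in the three scales.
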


\begin{proof}
	We only consider the first equation as the second equation is analogous. Write $\bar{X}=[an-Xn^{\frac{1}{3}}]$ and $\ell=[\lambda n^{\frac{1}{3}}]+r$. We have by~\eqref{eq:Gnlx} 
\begin{equation}
	\begin{split}
        &\overline{G}(n,\ell,\bar{X})= \frac{1}{2\pi \mathrm{i}}\int_{\Gamma_{\{\overline{X}\}}'}\text{d}w \; \frac{ \Gamma(n-2\ell+w+2)\Gamma(2n-\ell-w+2) }{\Gamma(n+w-\bar{X}+1) \Gamma(3n-2\ell-w+{3}) }\\
		&\times \frac{\Gamma(2n-2w+1) (3n-3w+2)}{ \Gamma(\bar{X}-2w+1)(n-w+1) \prod_{j=\ell}^{n} (w-j) }.
	\end{split}
	\end{equation}
Next, notice that
	\begin{equation}
\frac{1}{\prod_{j=\ell}^{n}(w-j)}=(-1)^{n-\ell+1} \frac{1}{\prod_{j=0}^{n-\ell}(\ell+j-w)}
=(-1)^{n-\ell+1}\frac{\Gamma(\ell-w)}{\Gamma(n+1-w)}
\end{equation}
which is analytic as we deform the contour to be away from the integer points.  We now apply  the change of variables $w \mapsto w n$ which gives
\begin{equation}
	\begin{split}
		\frac{(-1)^{n-\ell+1}}{2\pi \mathrm{i}}\int& \text{d}w \; n \frac{ \Gamma(n-2\ell+nw+2)\Gamma(2n-\ell-nw+2) \Gamma(2n-2nw+1)}{ \Gamma(n+nw-\bar{X}+1) \Gamma(3n-2\ell-nw+{3}) \Gamma(\bar{X}-2nw+1)}  \\ 
		&\times \frac{\Gamma(\ell-nw)(3n-3nw+2)}{\Gamma(n+1-nw)(n-nw+1)},
	\end{split}
\end{equation}
	where the contour is the same as the one described in the statement of the lemma.  We now apply Stirling's approximation; see \cite[{Lemma} 7.3]{Pet14} for the precise statement.  After expanding the exponent into terms of order $n$, $n^{\frac{1}{3}}$ and constant order, we arrive at the first equation.  The second equation follows in a similar fashion. 
\end{proof}

	The leading order asymptotics for $\overline{G}$ and $\overline{H}$ in~\eqref{lemeq:startingasympG} and~\eqref{lemeq:startingasympH} is from $S_1(w,a)$ defined in \eqref{eq:S1}. The next four lemmas focus on this function.

	\begin{lem}
		We have that $\frac{\partial S_1}{\partial w} (w_{\pm}(a),a)=0$  where
		\begin{equation}
			w_{\pm}(a)= \frac{ (4-10a+a^2)\pm 2(a-2) \sqrt{1-4a+a^2}}{-8-4a+a^2}
		\end{equation}
		and that $w_+(\alpha)=w_-(\alpha)$ where $\alpha$ is given in ~\eqref{eq:scalings}.  For $0 \leq a < \alpha$, we have that $w_-(a)\leq w_+(\alpha) \leq w_+(a)\leq 0$ while for {$ \alpha<a<2+\sqrt{3}$}, $w_{\pm}(a) \in \mathbb{C}$.  
	\end{lem}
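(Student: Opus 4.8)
The plan is to treat this as a pure calculus exercise about the explicit function $S_1(w,a)$ defined in \eqref{eq:S1}. First I would compute $\frac{\partial S_1}{\partial w}(w,a)$ term by term. Differentiating each summand of the form $(\alpha+\beta w)\log(\alpha+\beta w)$ gives $\beta\log(\alpha+\beta w)+\beta$, so the $+1$ constants will collect into a fixed number, and — crucially — one checks that the coefficients $\beta$ of the eight logarithmic terms sum to zero (the four ``numerator'' terms $1,-1,-2$ contributing $1-1-2=-2$ against the ``denominator'' terms $-1,-1,-1,-2$ contributing $(-1)\cdot(-1)+(-1)\cdot(-1)+(-1)\cdot(-1)+(-1)\cdot(-2)$, carefully with signs). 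This forces $\frac{\partial S_1}{\partial w}$ to be a sum of logarithms with coefficients summing to zero, which after exponentiating the critical-point equation $\frac{\partial S_1}{\partial w}=0$ becomes a rational equation in $w$ (and $a$). I would then clear denominators and simplify to a quadratic in $w$; the discriminant of that quadratic, after factoring, should be a perfect-square multiple of $1-4a+a^2$, yielding exactly the stated formula
\[
w_{\pm}(a)=\frac{(4-10a+a^2)\pm 2(a-2)\sqrt{1-4a+a^2}}{-8-4a+a^2}.
\]

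Second, I would verify the coincidence $w_+(\alpha)=w_-(\alpha)$: this is immediate once one has the formula, because $\alpha=2-\sqrt3$ is a root of $1-4a+a^2$ (indeed $(2-\sqrt3)^2-4(2-\sqrt3)+1=7-4\sqrt3-8+4\sqrt3+1=0$), so the square-root term vanishes at $a=\alpha$ and the two branches merge. One should also record that the leading coefficient $-8-4a+a^2$ does not vanish on the relevant range of $a$: its roots are $a=2\pm2\sqrt3$, and $2+2\sqrt3>2+\sqrt3$, so on $[0,2+\sqrt3)$ the denominator is nonzero (and negative), which legitimizes the formula and the ordering claims.

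Third, for the sign/ordering statements I would argue as follows. For $0\le a<\alpha$ the quantity $1-4a+a^2$ is positive (it equals $1$ at $a=0$ and has its first root at $a=\alpha$), so $w_{\pm}(a)$ are real and distinct; for $\alpha<a<2+\sqrt3$ it is negative, so $w_{\pm}(a)\in\mathbb C\setminus\mathbb R$ — this gives the last sentence directly. For the chain $w_-(a)\le w_+(\alpha)\le w_+(a)\le0$ on $[0,\alpha)$: the inequality $w_-(a)\le w_+(\alpha)\le w_+(a)$ should follow from monotonicity of the two branches in $a$ (compute $\frac{d}{da}w_{\pm}$, or simply note $w_+$ is decreasing and $w_-$ increasing toward their common value $w_+(\alpha)$ as $a\uparrow\alpha$), and $w_+(a)\le0$ can be checked by evaluating at $a=0$ (where the numerator is $4-2\cdot2\cdot1=0$, so $w_+(0)=0$) and showing $w_+$ stays nonpositive on the interval, e.g. by sign analysis of the numerator $(4-10a+a^2)+2(a-2)\sqrt{1-4a+a^2}$ against the negative denominator.

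The main obstacle is purely bookkeeping: getting the derivative of $S_1$ exactly right (signs and the telescoping of constants), and then the algebra of reducing the transcendental critical-point equation to the displayed quadratic and recognizing the discriminant as a perfect square times $1-4a+a^2$. Once that algebraic identity is in hand, everything else — the confluence at $\alpha$, the reality dichotomy, and the ordering — is elementary. I would organize the write-up so that the derivative computation and the resulting rational equation are displayed explicitly, then invoke the quadratic formula, and finally dispatch the inequalities by evaluating at the endpoints $a=0$ and $a=\alpha$ together with a monotonicity remark.
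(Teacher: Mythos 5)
Your proposal follows exactly the same line as the paper's proof: differentiate $S_1$ term by term and observe that the affine-linear constants telescope to zero so that $\partial_w S_1$ is a single logarithm of a rational function, exponentiate the critical-point equation, observe that the degree-$4$ and degree-$3$ terms cancel leaving a genuine quadratic with leading coefficient $8+4a-a^2$, read off $w_\pm(a)$ from the quadratic formula, and then establish the ordering on $[0,\alpha)$ by differentiating $w_\pm(a)$ in $a$ and evaluating at the endpoints $a=0$ and $a=\alpha$. The small extra observations you add (nonvanishing and fixed sign of the denominator on $[0,2+\sqrt3)$, the explicit check $w_+(0)=0$, the sign of $1-4a+a^2$ governing the reality dichotomy) are sound and in the spirit of the paper; the only blemish is that the parenthetical ``numerator/denominator'' bookkeeping of the eight $\beta$-coefficients is garbled as written, though the cancellation you invoke is in fact correct.
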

	\begin{proof}
We have that 
		\begin{equation}
			\frac{\partial S_1}{\partial w}(w,a)= \log \left[ \frac{ (1+w)(1-w)(3-w)(a-2w)^2}{(2-w)(2-2w)^2(-w)(1+w-a)} \right] 
		\end{equation}
		and so $\frac{\partial S_1}{\partial w}(w,a) =0$ means that 
		$$
		(1+w)(3-w)(a-2w)^2=4(2-w)(1-w)(-w)(1+w-a).
		$$
		Solving for $w$ gives the formulas for $w_\pm (a)$. It follows immediately that $w_+(\alpha)=w_-(\alpha)$.  To see that $w_+(\alpha) \leq w_+(a) \leq 0$, differentiate the formula for $w_+(a)$ given in the statement of the lemma with respect to $a$ which gives
		\begin{equation}
			\frac{d}{da} w_+(a) = \frac{ 6(16-4a+a^2)\sqrt{1-4a+a^2} -6(16-28a+7a^2)}{(-8-4a+a^2)^2 \sqrt{1-4a +a^2}}.
		\end{equation}
		Multiplying the numerator by $(16-4a+a^2)\sqrt{1-4a+a^2} +(16-28a+7a^2) \geq 0$ for $0<a\leq \alpha$, we see that $\frac{d}{da} w_+(a) \leq 0$ for $0<a\leq \alpha$.  Evaluating $w_+(a)$ at $a= \alpha$ and $a=0$, we see that $w_+(\alpha) \leq w_+(a) \leq 0$. A similar argument holds for $w_-(a)$.  
	\end{proof}

	\begin{lem}
	\label{lem:D2S1} 
	We have that 
		\begin{equation} 
			\begin{split}
				&\frac{\partial^2 S_1}{\partial w^2} (w_{\pm}(a),a)\\=&\frac{{-}1}{6(4-a)^2(2-a)a^2} \Big( -256+1296a-1428a^2+616a^3-117a^4\\
			&+12a^5-a^6 \pm (-256+752a-396a^2+104a^3-13a^4)\sqrt{1-4a+a^2} \Big),
			\end{split}
		\end{equation}
		\begin{equation}
			\frac{ \partial^2 S_1}{\partial w^2} (w_+(a),a)>0 \mbox{ and }	\frac{ \partial^2 S_1}{\partial w^2} (w_-(a),a)<0 \mbox{ for }0<a<\alpha.
		\end{equation}
	We also have that  $\frac{ \partial^2 S_1}{\partial w^2} (w_+(a),a)$
		(resp. $\frac{ \partial^2 S_1}{\partial w^2} (w_-(a),a)$) is decreasing (resp. increasing) for $0<a<\alpha$. 

	\end{lem}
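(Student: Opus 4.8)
The plan is to establish the claimed second-derivative formula, its sign, and its monotonicity by a combination of exact algebra (using the critical-point equation) and careful reduction to a single-variable analysis. Since all statements are about the explicit function $S_1(w,a)$ and its critical points $w_\pm(a)$, everything is in principle a finite computation; the goal is to organize it so the computation is manageable and the sign conclusions become transparent.

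First I would compute $\frac{\partial^2 S_1}{\partial w^2}(w,a)$ directly by differentiating the logarithmic-derivative expression
\[
\frac{\partial S_1}{\partial w}(w,a)= \log\!\left[ \frac{ (1+w)(1-w)(3-w)(a-2w)^2}{(2-w)(2-2w)^2(-w)(1+w-a)} \right],
\]
which gives $\frac{\partial^2 S_1}{\partial w^2}$ as a sum of simple partial fractions $\sum \frac{\pm c_k}{w-p_k}$ with poles at $-1,1,3,a/2,2,1,0,a-1$ and the appropriate signs and multiplicities. Then I would substitute $w=w_\pm(a)$. Rather than substituting the messy radical expression for $w_\pm(a)$ naively, I would use the critical-point relation $(1+w)(3-w)(a-2w)^2=4(2-w)(1-w)(-w)(1+w-a)$ to reduce all occurrences to a polynomial in $w$ of low degree, and recall that $w_\pm(a)$ are the two roots of the quadratic $(-8-4a+a^2)w^2-(4-10a+a^2)w+\text{(constant)}$ appearing in the previous lemma. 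Using the elementary symmetric functions $w_++w_-$ and $w_+w_-$ of that quadratic, any symmetric rational expression in $w_\pm$ becomes a rational function of $a$ alone, while the antisymmetric part picks up the single factor $\sqrt{1-4a+a^2}$; this is exactly the structure of the claimed formula. I would verify the stated closed form by checking that both sides, cleared of denominators, agree as polynomials (a finite check, easily done by expanding or by evaluating at enough values of $a$).

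For the sign statements on $0<a<\alpha$, I would exploit the fact that the two values $\frac{\partial^2 S_1}{\partial w^2}(w_\pm(a),a)$ are the two roots of an explicit quadratic in $y$ whose coefficients are polynomials in $a$ (obtained by the same symmetrization: $y_++y_-$ and $y_+y_-$ are rational in $a$). To conclude $y_+>0>y_-$ on $0<a<\alpha$ it suffices to show $y_+y_-<0$ there, i.e.\ the product of the two second derivatives is negative; that product is a single rational function of $a$ whose sign I would determine by factoring numerator and denominator and checking signs on the interval $(0,\alpha)$, using $1-4a+a^2>0$ for $a<\alpha=2-\sqrt3$. (Identifying which root is $y_+$ and which is $y_-$ is then just a matter of evaluating at one convenient point, e.g.\ $a\to 0$.) For the monotonicity claims, I would differentiate the closed-form expressions for $\frac{\partial^2 S_1}{\partial w^2}(w_\pm(a),a)$ in $a$ and show the derivative has a constant sign on $(0,\alpha)$ by the same clear-denominators-and-factor strategy, possibly multiplying by an obviously positive factor (as was done for $\frac{d}{da}w_+(a)$ in the previous lemma) to remove the radical before analyzing the sign of a polynomial on $(0,\alpha)$.

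The main obstacle will be purely computational bookkeeping: the second derivative at the critical point is a large rational function, and reducing it cleanly to the stated form plus verifying the signs of the resulting degree-six (and higher, after differentiating in $a$) polynomials on the interval $(0,\alpha)$ requires care to avoid sign errors and to choose the right auxiliary positive multipliers so that the radical $\sqrt{1-4a+a^2}$ disappears. There is no conceptual difficulty—the critical-point equation and the quadratic satisfied by $w_\pm(a)$ give all the needed leverage—but the algebra should be done with a computer algebra system and the final polynomial sign checks on $(0,\alpha)$ stated explicitly (e.g.\ by exhibiting a factorization with all factors of definite sign, or by checking the polynomial is monotone with a positive/negative value at one endpoint).
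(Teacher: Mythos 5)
Your proposal is correct and follows essentially the same computer-algebra-driven strategy as the paper: differentiate $S_1$ twice, substitute $w=w_\pm(a)$, simplify with CAS, and then settle sign and monotonicity by polynomial sign analysis on $(0,\alpha)$. The one place you take a genuinely different route is the sign claim. The paper first establishes monotonicity (the $a$-derivative of $\partial_w^2 S_1(w_+(a),a)$ has no zeros on $(0,\alpha)$) and then reads off the sign from the endpoint limits: $\partial_w^2 S_1(w_+(a),a)\to +\infty$ as $a\to 0^+$ and $\to 0$ as $a\to\alpha^-$, so it must be decreasing and positive. You instead observe that the product $y_+y_-$ of the two critical values is a rational function of $a$ with no radical (the antisymmetric $\sqrt{1-4a+a^2}$ parts cancel), show that this product is negative on $(0,\alpha)$, and then pin down which root is positive by a single evaluation, with continuity and non-vanishing of $y_\pm$ guaranteeing that the labeling does not flip. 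Both routes are sound; your product argument decouples the sign claim from the monotonicity claim and replaces endpoint-limit computations by one rational sign check, whereas the paper's endpoint argument delivers the sign and the direction of monotonicity simultaneously. For the monotonicity part your plan (differentiate in $a$, clear denominators, possibly multiply by a manifestly positive conjugate factor to remove the radical, then check the sign of the resulting polynomial on $(0,\alpha)$) is the same idea as the paper's ``no zeros of the $a$-derivative'' check, just phrased in terms of a constant-sign factorization.
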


	\begin{proof}
		The formula for $\frac{ \partial^2 S_1}{\partial w^2} (w_+(a),a)$ follows from differentiating $S_1$ twice, setting $w=w_\pm (a)$ and simplifying using computer algebra.  Since we have an  explicit formula for $\frac{ \partial^2 S_1}{\partial w^2} (w_+(a),a)$, we can differentiate with respect to $a$ and set to zero, we find no solutions in $0<a< \alpha$.  This means that either $\frac{ \partial^2 S_1}{\partial w^2} (w_+(a),a)$ increasing or decreasing for all $0< a< \alpha$. As $a$ tends to zero, we see that $\frac{ \partial^2 S_1}{\partial w^2} (w_+(a),a)$ tends to $+\infty$  and as $a$ tends to $\alpha$, we see that $\frac{ \partial^2 S_1}{\partial w^2} (w_+(a),a)$ tends to $0$.  This means that $\frac{ \partial^2 S_1}{\partial w^2} (w_+(a),a)$ is decreasing and $\frac{ \partial^2 S_1}{\partial w^2} (w_+(a),a)>0$ for $0<a<\alpha$. An analogous argument holds for $\frac{ \partial^2 S_1}{\partial w^2} (w_-(a),a)$.
	\end{proof}

	\begin{lem}\label{lem:D3S1}
		For $\alpha$ defined in~\eqref{eq:scalings}, we have
		$$
		\frac{\partial^3 S_1}{\partial w^3} (w_\pm(\alpha), \alpha) =\frac{81}{4}.
		$$
	\end{lem}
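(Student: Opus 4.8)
\textbf{Proof proposal for \cref{lem:D3S1}.}

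The plan is to evaluate $\partial^3 S_1/\partial w^3$ explicitly and then substitute the common critical point $w_+(\alpha) = w_-(\alpha)$ at $a = \alpha$. First I would differentiate the closed-form expression for $\partial S_1/\partial w$ from the previous lemma, namely
\begin{equation}
\frac{\partial S_1}{\partial w}(w,a) = \log\left[\frac{(1+w)(1-w)(3-w)(a-2w)^2}{(2-w)(2-2w)^2(-w)(1+w-a)}\right],
\end{equation}
twice more with respect to $w$. Since this first derivative is a logarithm of a rational function, its $w$-derivative is a sum of simple terms $\pm c/(w - p)$ over the linear factors $p$, and the third derivative of $S_1$ is then $2\sum \pm c^3/(w-p)^3$ — an explicit rational function of $w$ and $a$ with no logarithms remaining. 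Concretely, writing $S_1' = \log N(w,a) - \log D(w,a)$ with $N,D$ the numerator and denominator polynomials above, one gets $S_1''' = \frac{d^2}{dw^2}\bigl(N'/N - D'/D\bigr)$, which is elementary to assemble by partial fractions over the eight linear factors.

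Next I would compute the value of the common critical point. From the formula $w_\pm(a) = \frac{(4-10a+a^2) \pm 2(a-2)\sqrt{1-4a+a^2}}{-8-4a+a^2}$, at $a = \alpha = 2-\sqrt 3$ the discriminant $1 - 4a + a^2$ vanishes (this is exactly the statement $w_+(\alpha) = w_-(\alpha)$ proved earlier), so $w_\pm(\alpha) = \frac{4 - 10\alpha + \alpha^2}{-8 - 4\alpha + \alpha^2}$. With $\alpha = 2 - \sqrt 3$ one has $\alpha^2 = 7 - 4\sqrt 3$, hence $4 - 10\alpha + \alpha^2 = -9 + 6\sqrt 3$ and $-8 - 4\alpha + \alpha^2 = -9 - 2\sqrt 3$ — wait, let me just say: one plugs in $\alpha$ and simplifies to obtain an explicit algebraic number $w_*(\alpha)$. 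Substituting $w = w_*(\alpha)$ and $a = \alpha$ into the rational expression for $S_1'''$, all the surds collapse and the result simplifies to $81/4$. This last simplification is best carried out with computer algebra, exactly as was done for the second-derivative formula in \cref{lem:D2S1}.

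The only real obstacle is bookkeeping: the denominators $(w_*(\alpha) - p)^3$ for the eight poles $p \in \{-1, 1, 3, \alpha/2, 2, 1, 0, 1-\alpha\}$ (with multiplicities) involve nested radicals in $\alpha$, and one must be careful that $w_*(\alpha)$ is genuinely a \emph{double} root of $S_1'$ so that the third derivative is the first one that need not vanish — but this is guaranteed by \cref{lem:D2S1}, which establishes $\partial^2 S_1/\partial w^2 \to 0$ as $a \to \alpha$. Since everything in sight is an explicit algebraic function of $a$, the identity $\partial^3 S_1/\partial w^3(w_\pm(\alpha),\alpha) = 81/4$ reduces to a finite symbolic computation, and no genuinely new idea is needed. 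I would simply state that the computation, performed with computer algebra, yields the claimed value.
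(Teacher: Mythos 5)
Your proposal takes essentially the same route as the paper: the paper's proof is the one-sentence observation that the identity follows by differentiating $S_1$ three times and substituting $w = w_+(\alpha) = w_-(\alpha)$, which (since $\partial S_1/\partial w$ is a logarithm of a rational function, so the third derivative is an explicit rational expression) is a finite symbolic computation. Your extra commentary on the partial-fraction structure and the role of \cref{lem:D2S1} is sound; just note two slips in passing, both of which you implicitly retracted by deferring to computer algebra — $-8-4\alpha+\alpha^2$ at $\alpha=2-\sqrt3$ equals $-9$ (so $w_\pm(\alpha) = 1 - 2/\sqrt3$), and the pole from the factor $1+w-a$ is at $w = a-1$, not $1-a$.
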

	\begin{proof}
	This immediately follows from differentiating $S_1$ three times and setting $w=w_+(\alpha)=w_-(\alpha)$. 
	\end{proof}

We can now give the descent contours for $S_1(w,a)$ which will be used in the asymptotic analysis below. We only need to describe these contours for $w \in \mathbb{H}$, the upper half plane, since $S_1(\overline{w},a)=\overline{S_1({w},a)}$ for $w \in \mathbb{C}$. \cref{fig:contours} shows the steepest descent and ascent contours. 
\begin{figure}
\includegraphics[height=6cm]{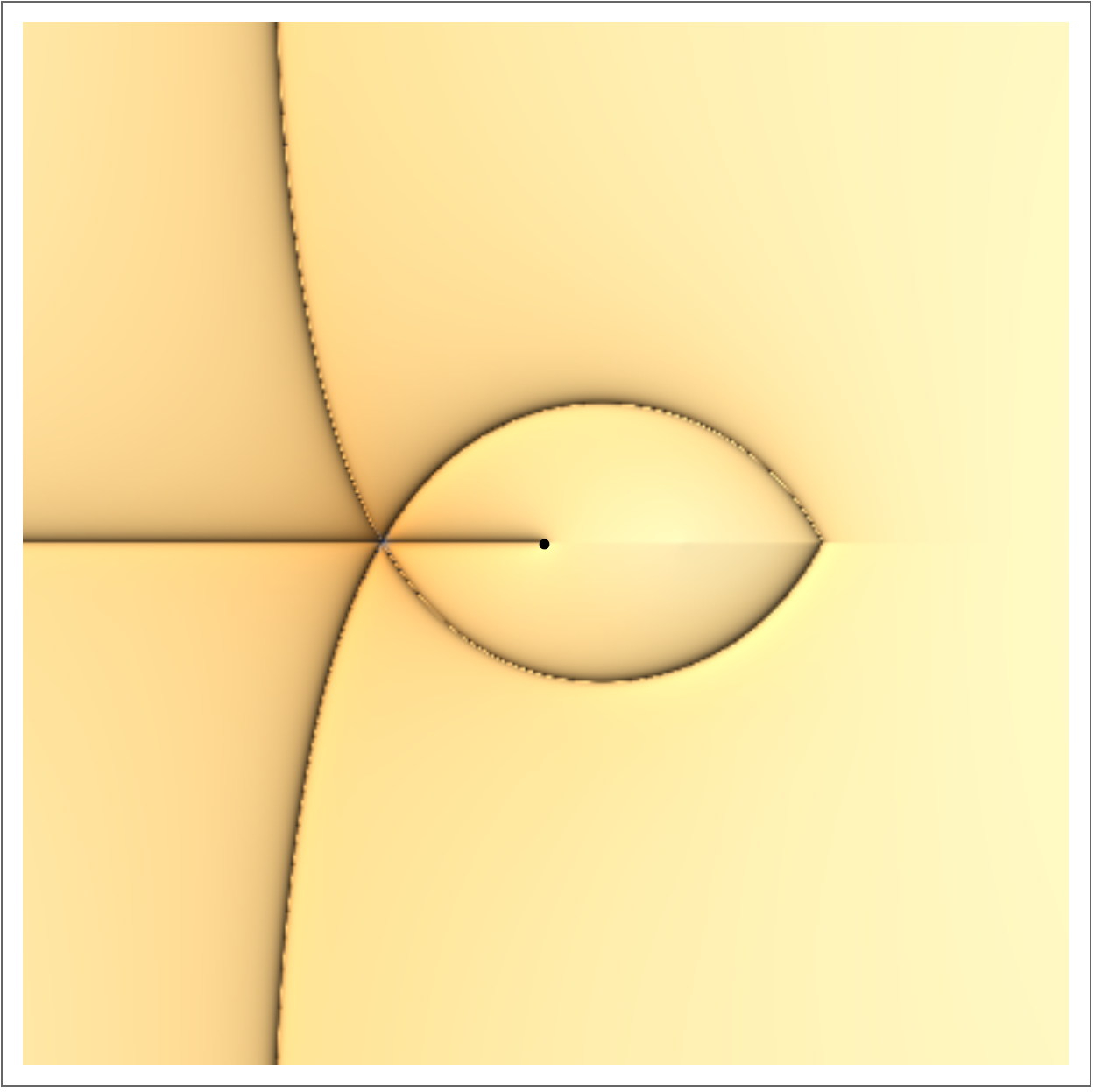}
\ \
\includegraphics[height=6cm]{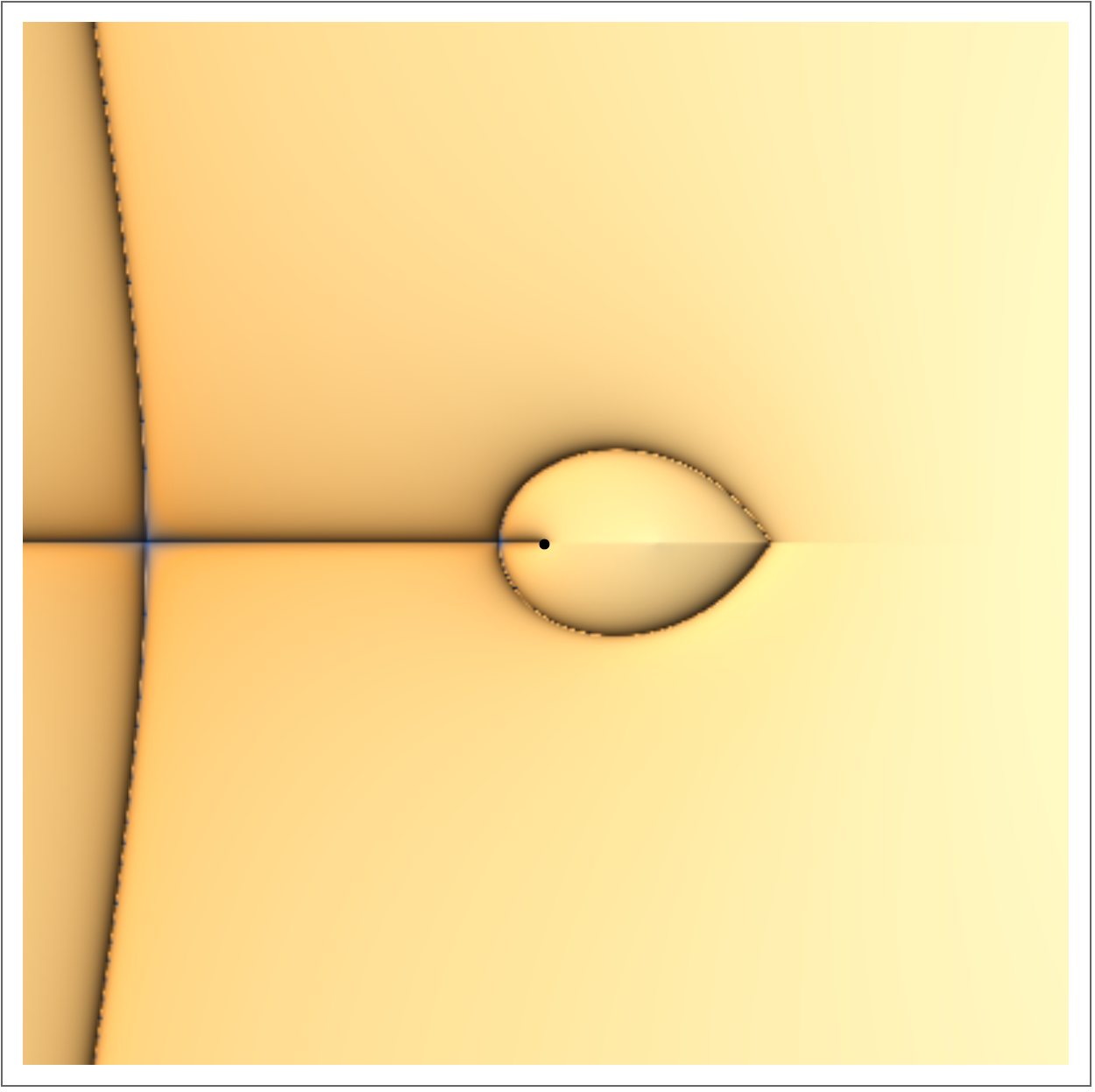}
	\caption{The left figure shows the steepest ascent and descent contours for $a=
	\alpha$, while the right figure shows the steepest ascent and descent contours for $a<\alpha$. The black dot in each of the figures represents the origin. }
\label{fig:contours}

\end{figure}

\begin{lem} \label{lem:steepestdescentcontour}
	For $0<a<\alpha$, the steepest descent contour for $S_1(w,a)$ leaves $w_+(a) \in (w_+(\alpha),0)$ at angle $\pi/2$ and ends at $a$.  The steepest descent contour for $S_1(w,\alpha)$ leaves $w_+(\alpha)$ at angle $\pi/3$ and ends at $\alpha$.
\end{lem}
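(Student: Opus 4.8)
\emph{Overview.} The plan is to realize the steepest descent contour for $S_1(\cdot,a)$ as the connected component, through the saddle $w_+(a)$, of the level set $L_a=\{w:\mathrm{Im}\,S_1(w,a)=\mathrm{Im}\,S_1(w_+(a),a)\}$ on which $\mathrm{Re}\,S_1$ strictly decreases away from $w_+(a)$, to read off its direction at $w_+(a)$ from the local Taylor expansion of $S_1$, and to identify its other endpoint on the real axis from the branch cut structure of $S_1$. Since $S_1(\overline w,a)=\overline{S_1(w,a)}$ and $w_+(a)\in\mathbb R$, it suffices to treat the arc in the closed upper half-plane; the full contour is its union with the conjugate arc, a closed loop.

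\emph{Direction at the saddle.} For $0<a<\alpha$ the critical point $w_+(a)$ is simple and $\partial_w^2 S_1(w_+(a),a)>0$ is real by \cref{lem:D2S1}. Writing $w-w_+(a)=\rho e^{\mathrm i\theta}$,
\[
S_1(w,a)-S_1(w_+(a),a)=\tfrac12\,\partial_w^2 S_1(w_+(a),a)\,\rho^2e^{2\mathrm i\theta}+O(\rho^3),
\]
so $\mathrm{Re}\,S_1$ decreases fastest along the rays with $e^{2\mathrm i\theta}<0$, i.e.\ $\theta=\pm\pi/2$, and the one in the upper half-plane is $\theta=\pi/2$. For $a=\alpha$ one has $\partial_w^2 S_1(w_+(\alpha),\alpha)=0$ and $\partial_w^3 S_1(w_+(\alpha),\alpha)=81/4>0$ by \cref{lem:D3S1}, so
\[
S_1(w,\alpha)-S_1(w_+(\alpha),\alpha)=\tfrac{27}{8}\,\rho^3e^{3\mathrm i\theta}+O(\rho^4),
\]
and steepest descent occurs along the rays with $e^{3\mathrm i\theta}<0$, i.e.\ $\theta\in\{\pi/3,\pi,5\pi/3\}$. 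Of the two lying in the closed upper half-plane, $\theta=\pi$ begins the descent arc running leftward towards $w=-1$, while $\theta=\pi/3$ begins the arc bounding the region that contains the cluster of poles accumulating on $[0,\alpha/2]$; the latter is the contour we need, which gives the second claim.

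\emph{Endpoint on the real axis.} Following $L_a$ out of $w_+(a)$ in the chosen direction, $\mathrm{Re}\,S_1$ strictly decreases, so the arc neither closes up nor returns to $w_+(a)$. Granting that it cannot escape to infinity and cannot terminate at a branch point (at each branch point $S_1$ is of the type $c\log c$, hence $\mathrm{Re}\,S_1$ is finite there), the arc must return to $\mathbb R$. Recalling that $\log$ is cut along $(-\infty,0]$, for real $w$ with $a/2<w<1$ the only arguments in \eqref{eq:S1} that become negative are $-w$ and $a-2w$, and taking boundary values from the upper half-plane one computes
\[
\mathrm{Im}\,S_1(w+\mathrm i0,a)=\pi w+\pi(a-2w)=\pi(a-w),
\]
which equals $\mathrm{Im}\,S_1(w_+(a),a)=0$ exactly at $w=a$. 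A parallel bookkeeping on the other real intervals cut off by the branch points $-1,a-1,0,a/2,a,1,2,3$ shows that, away from $w_+(a)$, the boundary value $\mathrm{Im}\,S_1(\cdot+\mathrm i0,a)$ is a nonzero multiple of $\pi$, except on $(a-1,0)\ni w_+(a)$, where $S_1$ is real with $\mathrm{Re}\,S_1>\mathrm{Re}\,S_1(w_+(a),a)$ off $w_+(a)$, so that this segment lies on the steepest ascent curve of $w_+(a)$; likewise the remaining critical points of $S_1$ (in particular $w_-(a)$) either are not on $L_a$ or carry a strictly larger value of $\mathrm{Re}\,S_1$ and so are unreachable along the arc. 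Hence the arc terminates at $w=a$, and reflecting across $\mathbb R$ yields the asserted closed descent contour, which encircles exactly the real segment $(w_+(a),a)$ and therefore the poles on $[0,a/2]$. The case $a=\alpha$ is identical with $a$ replaced by $\alpha$.

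\emph{Main obstacle.} The genuinely delicate step is the global tracing in the previous paragraph: excluding that the arc escapes to infinity or lands on an unintended real interval before returning to $\mathbb R$. The behavior at $w=\infty$ is itself subtle, since the leading $w\log w$ contributions to $S_1$ cancel and the growth of $\mathrm{Re}\,S_1$ along rays to infinity has to be extracted from subleading terms; and the numerous branch cuts on the real line require careful sign bookkeeping. In practice this is carried out using the explicit formula for $\partial_w S_1$ found while locating $w_\pm(a)$, the monotonicity statements in \cref{lem:D2S1} and \cref{lem:D3S1}, and the qualitative picture in \cref{fig:contours}.
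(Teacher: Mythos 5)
Your argument for the local direction of descent at the saddle is correct and essentially coincides with what the paper does: for $0<a<\alpha$ the simple saddle with $\partial_w^2 S_1(w_+(a),a)>0$ real (\cref{lem:D2S1}) gives descent along $\theta=\pi/2$, and at $a=\alpha$ the degenerate saddle with $\partial_w^3 S_1=81/4$ (\cref{lem:D3S1}) gives descent along $\theta=\pi/3$. The boundary-value computation on the real axis (vanishing of $\operatorname{Im} S_1(\cdot+\mathrm i0,a)$ at $w=a$, nonzero integer multiple of $\pi$ on the other intervals, real on $(a-1,0)$) is also the same in spirit as the paper's piecewise formula for $\operatorname{Im} S_1(t,a)$, modulo the sign convention in the branch of $\log$.

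However, there is a genuine gap: you do not actually rule out escape of the descent arc to infinity. You say ``Granting that it cannot escape to infinity \ldots'' and then, in the final paragraph, flag this as ``the genuinely delicate step'' and gesture at how it is ``carried out in practice,'' but you never carry it out. The paper fills exactly this gap with a concrete device that your proof does not contain: it rewrites
\[
S_1(w,a)=a\pi\mathrm i\operatorname{Im}(w)+(2-a)\log 2+\left(\int_{-1}^{a-1}+\int_{a/2}^{1}-\int_{2}^{3}-\int_{0}^{a/2}\right)\log(w-t)\,\mathrm dt,
\]
so that $\operatorname{Re} S_1(w,a)$ becomes a logarithmic potential (a signed combination of $\int\log|w-t|\,\mathrm dt$ on finitely many real intervals) plus a constant, from which the large-$|w|$ behavior can be read off and used to conclude that the level set through $w_+(a)$ on which $\operatorname{Re} S_1$ decreases is contained in a bounded region. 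Without such an argument the ``must return to $\mathbb R$'' step is unsupported, and the lemma is not proved. A secondary, smaller issue: $a$ itself is not a branch point of $S_1$ (the branch points are $\{-1,\,a-1,\,0,\,a/2,\,1,\,2,\,3\}$); $a$ is merely an interior zero of $\operatorname{Im} S_1(\cdot+\mathrm i0,a)$ on the interval $(a/2,1)$, which is precisely why the level line can pierce the real axis there, and this should be stated carefully rather than lumping $a$ with the branch points.
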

From the above lemma, the steepest descent contour in the upper and lower half plane forms a closed curve in $\mathbb{C}$.  
We label this positively oriented closed curve $\gamma_a$ for $0<a \leq \alpha$.  

\begin{proof}
	A computation shows that { $w_+(a)+1-a>0$ } so  $\mathrm{Im} S_1(w_+(a),a)=0$.  The steepest descent contour is the level line of $\mathrm{Im} S_1(w,a)=0$ which we need to describe.  From \cref{lem:D2S1}, we see that the contour leaving $w_+(a)$ at angle $\pi/2$ is the steepest descent contour for $S_1(w,a)$ with $0<a<\alpha$ and from \cref{lem:D3S1}, we see that the contour leaving $w_+(\alpha)$ at angle $\pi/3$ is the steepest descent contour for $S_1(w,\alpha)$.  

	We can also write $S_1(w,a)$ as 
		\begin{equation}
			a \pi \mathrm{i}{\mathrm{Im}(w)} +(2-a)\log 2 + \left( \int_{-1}^{a-1}+\int_{\frac{a}{2}}^1-\int_2^3-\int_0^{\frac{a}{2}} \text{d}t \; \log(w-t) \right),
		\end{equation}
{which follows by integrating out and noting that we have used the principal branch of the logarithm.}
From this expression, we see that {
\begin{equation}
\mathrm{Re} S_1(w,a)=(2-a)\log 2 + \left( \int_{-1}^{a-1}+\int_{\frac{a}{2}}^1-\int_2^3-\int_0^{\frac{a}{2}} \text{d}t \; \log|w-t| \right),
\end{equation}
and so}
\begin{equation}
	\mathrm{Re} S_1(w,a)={a} \log |w|+{ (2-a)\log 2}+O\bigg( \frac{1}{|w|} \bigg), 
\end{equation}
		as $|w| \to \infty$.  This implies that the steepest descent contour is contained in some ball with finite radius, that is, the steepest descent contour will end on the real axis.  To find this point, we evaluate $\mathrm{Im}S_1(t,a)$  for $t\in \mathbb{R}$ and find that 
		\begin{equation}
			\mathrm{Im}S_1(t,a)= 
			\begin{cases}
				a\pi & t \leq -1, \\
				-(1+t-a)\pi & -1 < t \leq -1+a,\\
				0 & -1+a<t \leq 0, \\
			{	-t \pi} & 0<t \leq \frac{a}{2}, \\
				{(t-a)\pi }& \frac{a}{2}<t\leq 1, \\
				{(1-a) \pi} & 1<t \leq 2, \\
				{(3-a-t)\pi} & 2 < t \leq 3,  \\
			{-a \pi} & t>3. 
			\end{cases}
		\end{equation}
		From the above equation, the only possible ending of the steepest descent contour starting from $w_+(a)$ is $a$ for $0 < a \leq \alpha$.  
\end{proof}

We need the following estimates.
\begin{lem}
\label{lem:estimates}
	Suppose that $M>0$ is fixed and let $m \in \mathbb{N}$. For $v{<}-M$, we have  
	\begin{equation}
		|\overline{G}(m,0,\alpha m-vm^{1/3})|\leq C e^{-c v},
	\end{equation}
	and
	\begin{equation}
		|\overline{H}(m,0,\alpha m-vm^{1/3})|\leq C e^{-c v},
	\end{equation}
where $C,c>0$ are constants that only depend on $M$.  
\end{lem}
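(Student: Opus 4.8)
The plan is to run a saddle-point analysis on the contour integral representations of $\overline{G}(m,0,\alpha m - v m^{1/3})$ and $\overline{H}(m,0,\alpha m - v m^{1/3})$ coming from \cref{lem:startingasymp}, specialized to $\ell = 0$ (so $\lambda = 0$, $r = 0$), with $a = \alpha$ and $X = v$. Recall from \cref{lem:startingasymp} that, up to an $O(1/m^2)$ error in the exponent and the prefactor $A_m$, the integrand is $A_m(w,\alpha,0,v,0)\,e^{m \tilde S_m(w,\alpha,0,v,0)}$, where $\tilde S_m = S_1(w,\alpha) + m^{-2/3} S_2(w;\alpha,0,v) + m^{-1} S_3(w;\alpha,0) - m^{-1}\log m + m^{-1}$. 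The leading term $S_1(w,\alpha)$ has a degenerate saddle at $w_+(\alpha) = w_-(\alpha)$ (by the lemma preceding \cref{lem:D2S1} and \cref{lem:D3S1}, it is a cubic saddle with $\partial^3_w S_1(w_+(\alpha),\alpha) = 81/4 > 0$). First I would deform the contour $\Gamma'_{\{x\}}$ (which, after the $w\mapsto wn$ rescaling, surrounds the points $\{0,1/m,\dots\}$) onto the steepest-descent contour $\gamma_\alpha$ of \cref{lem:steepestdescentcontour}, which leaves $w_+(\alpha)$ at angles $\pm\pi/3$ and returns to the real axis at $\alpha$; this is permissible since, as noted after \eqref{eq:Snwxpoles} and in the discussion of $\kappa$, the integrand has no poles obstructing the deformation in the relevant region.

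Next I would extract the $v$-dependence. On $\gamma_\alpha$ we have $\mathrm{Re}\,S_1(w,\alpha) \le \mathrm{Re}\,S_1(w_+(\alpha),\alpha) =: \mathcal{C}_0$, with equality only at the saddle. The $v$ enters only through $m^{-2/3} S_2(w;\alpha,0,v) = m^{-2/3}\big(-v\log(1+w-\alpha) + v\log(\alpha - 2w)\big)$, so that
\begin{equation}
m^{-2/3}\,\mathrm{Re}\,S_2 = v\, m^{-2/3}\,\mathrm{Re}\,\log\frac{\alpha - 2w}{1+w-\alpha}.
\end{equation}
At $w = w_+(\alpha)$ one checks (this is a short explicit computation, since $w_+(\alpha) = 2 - \sqrt{3}/\!\cdots$ is explicit from \eqref{eq:scalings}-adjacent formulas) that $\tfrac{\alpha - 2w_+(\alpha)}{1+w_+(\alpha)-\alpha}$ is a real number strictly greater than $1$; write its logarithm as a positive constant, which after the $m^{-2/3}$ scaling and the $m$ in front of $\tilde S_m$ contributes a factor $e^{m^{1/3} v \cdot(\text{positive const})}$ to the integrand near the saddle. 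For $v < -M < 0$ this is $e^{-|v| m^{1/3}\cdot(\text{const})}$, which is even stronger than the claimed $e^{-cv} = e^{c|v|}$ bound — wait, the claimed bound is an \emph{upper} bound $Ce^{-cv}$, and since $v<0$, $-cv = c|v| > 0$, so we need the integral to not grow faster than $e^{c|v|}$. The point is that on the \emph{whole} contour $\gamma_\alpha$, not just at the saddle, the factor $\mathrm{Re}\,\log\frac{\alpha-2w}{1+w-\alpha}$ is bounded above by some constant $c$ (it is continuous on the compact contour $\gamma_\alpha$, which stays away from the branch points $w = \alpha/2$ and $w = \alpha - 1$), so $|m^{-2/3} v\,\mathrm{Re}\,S_2| \le c\,m^{-2/3}|v|$ pointwise, giving an overall factor $e^{c m^{1/3}|v|}$ at worst; combined with $e^{m\,\mathrm{Re}\,S_1(w,\alpha)} \le e^{m\mathcal{C}_0}$ and the bounded prefactor, one gets $|\overline G| \le C' e^{m\mathcal{C}_0} e^{cm^{1/3}|v|}$. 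To finish, I would verify that $e^{m\mathcal{C}_0}$ together with the contributions of $S_3$, the $-\log m/m + 1/m$ terms, and the actual Gaussian/Airy-type width of the saddle ($\sim m^{-1/3}$ since the saddle is cubic) produce a bounded constant — indeed the normalization is such that $\overline G(m,0,\cdot)$ is $O(1)$ at the center of the edge, as is implicit in \cref{prop:kernelconv} — so that the net bound is $C e^{c|v|} = C e^{-cv}$. The same argument applies verbatim to $\overline H$, since its integrand differs only by the extra factor $\tfrac{m - mw + 1}{m + mw - \alpha m - v m^{1/3}}$, which is $O(m^{1/3}/|v|)$ or $O(1)$ uniformly on $\gamma_\alpha$ for $v < -M$ (the denominator is bounded away from zero there because $\mathrm{Re}(1+w-\alpha) > 0$ on $\gamma_\alpha$ forces $m + mw - \alpha m$ to have the right sign), and is absorbed into the constant after possibly weakening $c$.

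The main obstacle I anticipate is making the contour deformation and the "bounded prefactor / bounded width" claims fully rigorous \emph{uniformly in $m$} and \emph{uniformly in $v < -M$}: one must control the $O(1/m^2)$ error in the exponent from \cref{lem:startingasymp} over an unbounded range of $v$ (so that $m^{1/3}|v|$ is not necessarily small), check that the tails of the rescaled contour — the parts near the real axis away from the saddle, and near the endpoint $\alpha$ — are genuinely exponentially subdominant with a gain that survives the $e^{cm^{1/3}|v|}$ factor, and confirm that no pole of $1/\big((m-w+1)\prod_{j=0}^m(w-j)\big)$ (after rescaling, the points $j/m$) is crossed when deforming to $\gamma_\alpha$. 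A clean way to handle the uniformity is to first reduce, via \cref{lem:changeofvariables} (already stated), to the $\ell=0$ case — which has been done — and then, rather than tracking $v$ through the full saddle expansion, simply use the pointwise bound $\mathrm{Re}\,\tilde S_m(w,\alpha,0,v,0) \le \mathcal{C}_0 + c\,m^{-2/3}|v| + O(m^{-1})$ valid on all of $\gamma_\alpha$ (with $c = \max_{w\in\gamma_\alpha}\mathrm{Re}\,\log\frac{\alpha-2w}{1+w-\alpha}$, finite and explicit), multiply by the length of $\gamma_\alpha$ and the sup of $|A_m|$ on $\gamma_\alpha$ (uniformly bounded since $A_m \to$ an explicit non-vanishing limit), and absorb $e^{m\mathcal{C}_0 + O(m^{-1})}\cdot(\text{const})$ into $C$; this sacrifices the sharp constants but yields exactly the stated crude exponential bound, which is all that is needed for the domination argument in \cref{prop:kernelbounds}.
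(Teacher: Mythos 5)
Your overall framework (deform to a steepest-descent contour, isolate the saddle, bound the rest) is the right shape, but there is a genuine gap, and it comes from a choice that the paper makes differently from you, plus two concrete computational errors.

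\textbf{The key difference: where the $v$-dependence lives.} You keep $a=\alpha$ throughout and push the $v$-dependence into $X$, so that the $v$-dependence sits only in the subleading term $m^{-2/3}S_2$. The paper instead sets $a=\alpha - v\,m^{-2/3}$ and $X=0$. Then the full $v$-dependence is absorbed into the \emph{leading} phase $S_1(\cdot,a)$, the relevant saddle is the \emph{non-degenerate} one $w_+(a)$ with angle $\pi/2$ (by \cref{lem:D2S1} and \cref{lem:steepestdescentcontour}, not the cubic saddle at $w_+(\alpha)$), and the asymptotics are a plain Gaussian saddle rather than an Airy scaling. The exponential decay then drops out of the fact that $g(a):=S_1(w_+(a),a)<0$ for $a<\alpha$, using that $g(\alpha)=0$, $g'(\alpha)=0$ and $g'>0$ on $(0,\alpha)$. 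This is the step your plan has no analogue of, and without it the argument does not close.

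\textbf{A concrete arithmetic error at the saddle.} You assert that
$\frac{\alpha-2w_+(\alpha)}{1+w_+(\alpha)-\alpha}$ is real and strictly greater than $1$.
In fact $w_+(\alpha)=1-\tfrac{2}{\sqrt3}$, so $\alpha-2w_+(\alpha)=1+w_+(\alpha)-\alpha=\tfrac{1}{\sqrt3}$ and the ratio equals exactly $1$, i.e.\ $\log$ of it is $0$. This is forced by the paper's computation $g'(a)=\log\frac{1-a+w_+(a)}{a-2w_+(a)}$, which vanishes at $a=\alpha$. Consequently the $S_2$ term contributes nothing at the saddle; its effect on the exponent near the saddle is purely through the deviation $w-w_+(\alpha)$, which under the cubic scaling $w-w_+(\alpha)\sim m^{-1/3}$ is exactly what produces the $-\xi\omega$ term in the Airy formula of \cref{prop:kernelconv} (\textit{Claim}~4.8), not an a~priori exponential factor.

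\textbf{The pointwise bound on $\gamma_\alpha$ does not give the stated estimate.} Even granting a bound $\big|\mathrm{Re}\log\tfrac{\alpha-2w}{1+w-\alpha}\big|\le c_1$ on $\gamma_\alpha$, the exponent contributes $m\cdot m^{-2/3}\,v\,\mathrm{Re}\log(\cdot)$, i.e.\ a factor $e^{O(m^{1/3}|v|)}$. Your conclusion silently replaces $e^{c m^{1/3}|v|}$ by $e^{c|v|}$; these are not comparable uniformly in $m$, so the final inequality $|\overline G|\le Ce^{-cv}$ does not follow from this route. The paper avoids this entirely because, after the reparametrization $a=\alpha-vm^{-2/3}$, the $v$-dependence is inside $S_1$, and the Gaussian saddle bound is
$\lesssim \tfrac{1}{\sqrt{m\,S_1''(w_+(a),a)}}\,e^{mS_1(w_+(a),a)}$
with the exponent $mS_1(w_+(a),a)=m g(a)<0$; one then has to check that the prefactor $d(a)/\sqrt{S_1''(w_+(a),a)}$ stays bounded as $a\to\alpha$, which the paper does by an explicit computation.

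\textbf{A small remark on the hypothesis.} As printed the lemma says ``for $v<-M$'', but the paper's proof treats $|v|\le M$ (via \textit{Claim}~4.8) and $v>M$ (via the reparametrization), and the way the lemma is invoked in the proof of \cref{prop:kernelconv} produces $v\ge C n^\delta>0$. So the meaningful regime is $v>M$ (large positive), where $e^{-cv}$ is a genuine decay bound; for $v\le -M$ the stated inequality is vacuous. Taking the statement literally as $v<-M$, as you did, is understandable, but it has the sign of the relevant case reversed and makes the lemma appear trivial.

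In short: the correct saddle for this regime is the shifted, non-degenerate one at $w_+(a)$ with $a=\alpha-vm^{-2/3}$, not the cubic saddle at $w_+(\alpha)$, and the exponential decay must come from $S_1(w_+(a),a)<0$, not from $S_2$. Your plan keeps the degenerate saddle, relies on a value of the ratio at the saddle that is actually $1$, and ends with an $m^{1/3}$-in-the-exponent bound that cannot be absorbed into the claimed $Ce^{-cv}$.
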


The proof is postponed until after the proof of \cref{prop:kernelconv}, which we give now.

\begin{proof}[Proof of \cref{prop:kernelconv}]
	Fix $\delta >0$. We first demonstrate 
\begin{claim}\label{claim}
 For $0 \leq \lambda, \xi,\mu \leq n^{\delta}$, we have
	\begin{equation}\label{propproof:overlineGasymp}
		\begin{split}
			&\overline{G}(n,[\lambda n^{\frac{1}{3}}]+r,[\alpha n- \xi c_0 n^{\frac{1}{3}}]) = \frac{(-1)^{n+[\lambda n^{\frac{1}{3}}]+r}(1+O(n^{-\frac{1}{3}}))}{2 \pi \mathrm{i}} \\
			& \times \int_{\Gamma_{\mathrm{Ai}}} \text{d} \omega \; c_1 \frac{3 \sqrt{3}}{2 n^{\frac{1}{3}}} (1-9c_1r \omega n^{-\frac{1}{3}})e^{\frac{\omega^3}{3} -\xi \omega - 9c_1 \lambda \omega },
		\end{split}
	\end{equation}
and
\begin{equation}\label{propproof:overlineHasymp}
		\begin{split}
			&\overline{H}(n,[\lambda n^{\frac{1}{3}}]+r,[\alpha n- \xi c_0 n^{\frac{1}{3}}-\mu n^{\frac{1}{3}}]) = \frac{(-1)^{n+[\lambda n^{\frac{1}{3}}]+r} (1+O(n^{-\frac{1}{3}}))}{2 \pi \mathrm{i}} \\ & \times \int_{\Gamma_{\mathrm{Ai}}} \text{d} \omega \; c_1 \frac{3 \sqrt{3}}{ n^{\frac{1}{3}}} (1-9c_1r \omega n^{-\frac{1}{3}})e^{\frac{\omega^3}{3} -(\xi+\mu c_1 3 \sqrt{3}) \omega - 9c_1 \lambda \omega },
		\end{split}
	\end{equation}
	where $c_0$ and $c_1$ are given in~\eqref{eq:scalings} and $r \in \{0,1\}$.
\end{claim}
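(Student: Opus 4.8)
The plan is to start from the contour-integral representations in \cref{lem:startingasymp} and perform a steepest-descent analysis around the critical point $w_+(\alpha)$, which by the preceding lemmas is a degenerate (double) critical point of $S_1(\cdot,\alpha)$ with $\partial_w^2 S_1(w_+(\alpha),\alpha)=0$, $\partial_w^3 S_1(w_+(\alpha),\alpha)=81/4\ne 0$. First I would substitute $a=\alpha$, $X=\xi c_0$ (and $X=\xi c_0+\mu$ for the $\overline H$ case), $\lambda\mapsto\lambda$, $r\in\{0,1\}$ into the expression for $n\tilde S_n(w,\alpha,\lambda,X,r)$ from \eqref{eq:Stilde}: the $S_1$ term carries the order-$n$ exponent, the $n^{-2/3}S_2$ term becomes order $n^{1/3}$, and the $n^{-1}S_3$ term becomes $O(1)$. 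The cubic vanishing of $S_1$ at $w_+(\alpha)$ forces the correct Airy scaling: set $w = w_+(\alpha) + \beta\,n^{-1/3}\omega$ for a suitable constant $\beta$, so that $nS_1(w,\alpha)\sim n S_1(w_+(\alpha),\alpha) + \tfrac16\partial_w^3S_1(w_+(\alpha),\alpha)\,\beta^3\,\omega^3$; choosing $\beta$ so that $\tfrac16\cdot\tfrac{81}{4}\beta^3=\tfrac13$ pins down $\beta$ in terms of $c_1$ (indeed $\beta$ should come out proportional to $c_1$, matching $c_1=2/3^{4/3}$), and then $c_0=1/(3\sqrt3\,c_1)$ makes the linear term $-X$ in $S_2$ turn into exactly $-\xi\omega$ after the rescaling — this is the bookkeeping that forces the stated constants.

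Next I would track the lower-order pieces. The $S_2$ contribution: under $w=w_+(\alpha)+\beta n^{-1/3}\omega$, the terms $-X\log(1+w-a)+X\log(a-2w)$ expand to $-X\,\partial_w[\log(1+w-a)-\log(a-2w)]|_{w_+(\alpha)}\cdot\beta n^{-1/3}\omega + O(n^{-2/3}X)$, and since $X$ is of order $n$ this produces the $O(1)$ linear-in-$\omega$ term; similarly the $\lambda$-dependent part of $S_2$ is of order $\lambda n^{1/3}\cdot n^{-2/3}\cdot n^{-1/3}=O(\lambda n^{-2/3})\cdot n$... more carefully, $n\cdot n^{-2/3}S_2$ with $S_2$ linear in the rescaled $w$ gives the $-9c_1\lambda\omega$ term after evaluating the logarithmic derivatives of the $\lambda$-part of $S_2$ at $w_+(\alpha)$; I would compute $\partial_w[2\log(3-w)-\log(2-w)+\log(-w)-2\log(1+w)]$ at $w=w_+(\alpha)=2-\sqrt3$ and check it equals the constant yielding $9c_1$. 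The amplitude $A_n(w,\alpha,\lambda,X,r)$: at leading order $A_n(w_+(\alpha),\dots)\to n\cdot(\text{const})$, and combined with the Jacobian $dw=\beta n^{-1/3}d\omega$ this gives the prefactor $c_1\frac{3\sqrt3}{2n^{1/3}}$ (resp.\ $c_1\frac{3\sqrt3}{n^{1/3}}$ for $\overline H$, where the extra factor $\frac{n-nw+1}{n+nw-an-Xn^{1/3}}\to \frac{1-w_+(\alpha)}{2w_+(\alpha)-\alpha}$ contributes the factor of $2$ and, in the $\mu$-case, converts $\mu n^{1/3}$ into the shift $\mu c_1 3\sqrt3$ in the exponent). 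The $r$-dependent correction: $A_n$ depends on $r$ through $2(\lambda n^{1/3}+r)/n$ shifts, producing a multiplicative factor $(1-9c_1 r\omega n^{-1/3}+O(n^{-2/3}))$ after expansion, while $S_3$'s $r$-part contributes only at order $n^{-1}\cdot n=O(1)$ but, being $\omega$-independent to leading order, gets absorbed; I would verify the $r$-term in $A_n$ indeed gives precisely $-9c_1 r\omega$ in the coefficient. The sign $(-1)^{n+[\lambda n^{1/3}]+r}$ is already explicit in \cref{lem:startingasymp} (note $(-1)^{-[\lambda n^{1/3}]+r}=(-1)^{[\lambda n^{1/3}]+r}$).

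For rigor I would deform the contour of \cref{lem:startingasymp} — which surrounds $\{0,\tfrac1n,\dots\}$ — onto the steepest-descent curve $\gamma_\alpha$ of \cref{lem:steepestdescentcontour}, which leaves $w_+(\alpha)$ at angle $\pi/3$, precisely matching the directions $e^{\pm i\pi/3}$ of $\Gamma_{\mathrm{Ai}}$; near $w_+(\alpha)$ a local piece of length $n^{-1/3+\epsilon}$ gives the Airy integral after rescaling, and on the complementary arc $\operatorname{Re}S_1(w,\alpha)<\operatorname{Re}S_1(w_+(\alpha),\alpha)-c$ so that part is exponentially negligible; the $O(n^{-2/3})$ in $n\tilde S_n$ and the $O(1/n^2)$ error from \cref{lem:startingasymp} are uniformly controlled for $\lambda,\xi,\mu\le n^\delta$ with $\delta$ small. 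The main obstacle I anticipate is the uniformity: one must ensure the steepest-descent estimates, and in particular the location of and descent from $w_+(\alpha)$, remain valid when $\lambda,\xi,\mu$ are allowed to grow like $n^\delta$ — this requires that the perturbation $n^{-2/3}S_2$ not move the critical point out of the regime where the cubic expansion dominates, which is why $\delta$ must be taken sufficiently small and why the error is only claimed as $O(n^{-1/3})$ rather than sharper. A secondary technical point is verifying the several explicit constant evaluations (the logarithmic derivatives of $S_1$, $S_2$, $S_3$ and of the amplitude at $w_+(\alpha)=2-\sqrt3$) collapse to exactly the combinations of $c_0,c_1$ claimed; these are routine but must be done carefully, and I would organize them as a short sequence of computer-algebra-checked identities as in the proof of \cref{lem:D2S1}.
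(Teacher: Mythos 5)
Your proposal follows essentially the same route as the paper: start from \cref{lem:startingasymp}, deform onto the steepest-descent contour $\gamma_\alpha$, localize near the double critical point $w_+(\alpha)$, rescale $w = w_+(\alpha)+c_1 n^{-1/3}\omega$ (your pinning of $\beta$ via $\tfrac16\cdot\tfrac{81}{4}\beta^3=\tfrac13$ matches $c_1^3 = 8/81$), Taylor-expand $S_1,S_2,S_3,A_n$, discard the complementary arc by steepest descent, and extend to $\Gamma_{\mathrm{Ai}}$ with exponentially small error. The treatment of $\overline H$ via the shift $\xi\mapsto\xi+\mu/c_0$ and the prefactor limit $\to 2$, the extraction of the $r$-correction from $A_n$, and the uniformity discussion for $\lambda,\xi,\mu\le n^\delta$ all mirror the paper.

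A few small slips worth fixing: $w_+(\alpha)$ is \emph{not} $2-\sqrt3$ (that is $\alpha$ itself); plugging $a=\alpha$ into the formula for $w_\pm(a)$ gives $w_+(\alpha)=1-2/\sqrt3$, so your constant evaluations at the critical point should use this value. Relatedly, the $\overline H$ prefactor should read $\frac{1-w_+(\alpha)}{1+w_+(\alpha)-\alpha}=2$, not $\frac{1-w_+(\alpha)}{2w_+(\alpha)-\alpha}$. Finally, the sign in the claim is \emph{not} literally the sign appearing in \cref{lem:startingasymp}: the lemma carries $(-1)^{n-[\lambda n^{1/3}]+r+1}$ while the claim has $(-1)^{n+[\lambda n^{1/3}]+r}$; although $(-1)^{-\ell}=(-1)^{\ell}$, there is still a residual factor $-1$, and it is supplied by reversing the orientation when the local piece of $\gamma_\alpha$ is mapped to $-\Gamma_{\mathrm{Ai},n}^{\varepsilon}$. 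None of these affect the validity of the overall strategy, which is the one the paper uses.
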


	Once we have proved the above claim, we will be able to compute the asymptotics for $f^{1,1},f^{1,2}$ and $f^{2,2}$ since for any $\lambda, \xi$ or $\mu$ being larger than $n^{\delta}$, we can apply \cref{lem:changeofvariables}, {which together} with \cref{lem:estimates} gives exponential decay seen in \cref{prop:kernelbounds}.

\begin{proof}[Proof of \cref{claim}]
	We begin with showing~\eqref{propproof:overlineGasymp} with our starting point given by the right side of~\eqref{lemeq:startingasympG} in \cref{lem:startingasymp}, but setting $a=\alpha$ and $X=\xi c_0$. The computation proceeds using a  standard saddle point approximation argument so we will only give the main steps. From the formula \eqref{eq:Stilde} for $\tilde{S}_n(w,\alpha,\lambda,c_0 \xi,r)$, the main asymptotic contribution for the integral on the right side of~\eqref{lemeq:startingasympG} is given by $S_1(w,\alpha)$, which is defined in~\eqref{eq:S1}. We deform the contour to $\gamma_\alpha$  described just after \cref{lem:steepestdescentcontour}.  
{We split $\gamma_\alpha$ into two parts; for $\varepsilon>0$, let $\mathcal{G}_{w_+(\alpha),\varepsilon}=\gamma_\alpha \cap B(w_+(a),\varepsilon)$ and $\mathcal{G}_{w_+(a),\varepsilon}^c=\gamma_\alpha \cap B(w_+(a),\varepsilon)^c$ where $B(a,R)$ represents a ball of radius $R$ with center $a$.}
This means that we have
	\begin{equation}\label{propproof:Gsplit}
		\begin{split}
			&\overline{G}(n,[\lambda n^{\frac{1}{3}}] +r,[\alpha n-\xi c_0 n^{\frac{1}{3}}])\\&
			= \frac{(-1)^{n+[\lambda n^{\frac{1}{3}}] +r+1}}{2\pi \mathrm{i}} \int_{\mathcal{G}_{w_+(\alpha),\varepsilon}} \text{d}w \; A_n(w,\alpha,\lambda,c_0\xi,r)e^{n\big(\tilde{S}_n(w,\alpha,\lambda,c_0\xi,r)+O\big(\frac{1}{n^2}\big)\big)} \\
			&+\frac{(-1)^{n+[\lambda n^{\frac{1}{3}}] +r+1}}{2\pi \mathrm{i}} \int_{\mathcal{G}_{w_+(\alpha),\varepsilon}^c} \text{d}w \; A_n(w,\alpha,\lambda,c_0\xi,r)e^{n\big(\tilde{S}_n(w,\alpha,\lambda,c_0\xi,r)+O\big(\frac{1}{n^2}\big)\big)}.
		\end{split}
	\end{equation}
	Since we have established that $\gamma_\alpha$ is the contour of steepest descent, we have that {for $t\in \gamma_\alpha$ with $t\not=w_+(\alpha)$}
	\begin{equation}
		\mathrm{Re}[{S_1(t,\alpha)}]<-C \mathrm{dist}_{t \in \gamma_\alpha} (t,w_+(\alpha)),
	\end{equation}
	for some constant $C>0$.  Standard saddle point approximations show that the contribution from the second term in the right side of~\eqref{propproof:Gsplit} decays exponentially in $n$ compared to the first term.  

	To approximate the first term on the right side 
of~\eqref{propproof:Gsplit}, we apply a local change of variables $w\mapsto w_+(\alpha)+c_1 n^{-\frac{1}{3}} \omega$ where $c_1$ is defined in~\eqref{eq:scalings}.  Using Taylor's series on each of the terms $S_1,S_2, S_3$ and $A_n$ defined in~\eqref{eq:S1}, \eqref{eq:S2}, \eqref{eq:S3} and~\eqref{eq:An}, we obtain after a computation
	\begin{equation}
		\begin{split}
			&A_n(w_+(\alpha)+c_1 n^{-\frac{1}{3}}\omega,\alpha,\lambda,c_0\xi,r)e^{n (\tilde{S}_n(w_+(\alpha)+c_1 n^{-\frac{1}{3}}\omega,\alpha,\lambda,c_0\xi,r)+O(\frac{1}{n^2}))}  \\
			&= \frac{3 \sqrt{3}}{2}(1+O(n^{-\frac{1}{3}})) e^{\frac{\omega^3}{3} -\xi \omega -9c_1 \lambda \omega  +\mathrm{Err}},
		\end{split}
	\end{equation}
	where $\mathrm{Err}=n^{-\frac{1}{3}}(O(\omega^4)+O(\omega^2)+O(\omega))$ is the error from applying the Taylor series expansion.  Under the change of variables $w\mapsto w_+(\alpha)+c_1 n^{-\frac{1}{3}} \omega$, the contour $\mathcal{G}_{w_+(\alpha),\varepsilon}$ is mapped to the contour $-\Gamma_{\mathrm{Ai},n}^{\varepsilon}$ where $\Gamma_{\mathrm{Ai},n}^{\varepsilon}=B(0,n^{\frac{1}{3}} \varepsilon) \cap \Gamma_{\mathrm{Ai}}$.  Here, the negative sign means that the contour is negatively oriented, that is oriented from $n^{\frac{1}{3}} \varepsilon e^{\frac{\mathrm{i} \pi}{3}}$ to $n^{\frac{1}{3}} \varepsilon e^{-\frac{\mathrm{i} \pi}{3}}$, which is due to the fact that $\gamma_\alpha$ is positively oriented.  Reversing the orientation, applying the change of variables $w\mapsto w_+(\alpha)+c_1 n^{-\frac{1}{3}} \omega$ and Taylor approximation to the first integral on the right side of~\eqref{propproof:Gsplit} gives that~\eqref{propproof:Gsplit} is equal to 
	\begin{equation}\label{propproof:Gsplitfirst1}
		\frac{(-1)^{n+[\lambda n^{\frac{1}{3}}] +r}}{2\pi \mathrm{i}} \int_{\Gamma_{\mathrm{Ai},n}^{\varepsilon}} \text{d} \omega \; \frac{3 \sqrt{3}c_1}{2n^{\frac{1}{3}}}(1+O(n^{-\frac{1}{3}})) e^{\frac{\omega^3}{3} -\xi \omega -9c_1 \lambda \omega  +\mathrm{Err}}.
	\end{equation}
	We choose $\varepsilon$ small enough so that the error terms are not significant. Indeed, we can control the error term using the $|e^y-1|\leq |y|e^{y}$ where $y$ represents the error term and so we need to bound
	\begin{equation}
		\frac{1}{2 \pi} \int_{\Gamma_{\mathrm{Ai},n}^{\varepsilon}} \text{d} \omega \; \frac{3 \sqrt{3}c_1}{2n^{\frac{1}{3}}} |\mathrm{Err}| |e^{\frac{\omega^3}{3} -\xi \omega -9c_1 \lambda \omega }||e^{\mathrm{Err}}|.
	\end{equation}
	For $n$ large, the $e^{\omega^3/3}$ term dominates the integral at the ends of the contour $\Gamma_{\mathrm{Ai},n}^{\varepsilon}$, and so the above integral is bounded by $n^{-\frac{2}{3}}$ as $n\to \infty$.  Thus,~\eqref{propproof:Gsplitfirst1} is equal to 
	\begin{equation}
		\frac{(-1)^{n+[\lambda n^{\frac{1}{3}}] +r}(1+O(n^{-\frac{1}{3}})}{2\pi \mathrm{i}} \int_{\Gamma_{\mathrm{Ai},n}^{\varepsilon}} \text{d} \omega \; \frac{3 \sqrt{3}c_1}{2n^{\frac{1}{3}}}(1+O(n^{-\frac{1}{3}})) e^{\frac{\omega^3}{3} -\xi \omega -9c_1 \lambda \omega }.
	\end{equation}
	We can extend the contour to $\Gamma_{\mathrm{Ai}}$ which introduces an exponentially small error (in terms of $n$). This verifies~\eqref{propproof:overlineGasymp}.  We apply the same steps as given for~\eqref{propproof:overlineGasymp} to obtain~\eqref{propproof:overlineHasymp} using the mapping $\xi\mapsto \xi+\mu/c_0=\xi+3 \sqrt{3}\mu  c_1 $ and noticing that 
	\begin{equation}
		\frac{n-nw+1}{n+nw-\alpha n-\xi n^{\frac{1}{3}}}=2(1+O(n^{-\frac{1}{3}}))
	\end{equation}
	under the change of variables $w=w_+(\alpha)+c_1n^{-\frac{1}{3}}\omega$.  
\end{proof}

	We are now in the position to evaluate the asymptotics of $f^{1,1}$,$f^{2,2}$ and $f^{1,2}$. 
Write
	\begin{equation}\label{propproof:eq:scalings}
x=\alpha n -n^{\frac{1}{3}}c_1\xi, \quad y=\alpha n -n^{\frac{1}{3}}c_1\eta.
	\end{equation}
 We start with $f^{1,1}$.
From~\eqref{eq:f11} we have
	\begin{equation}
	\label{propproof:eqf11}
		\begin{split}
			(c_0 n^{\frac{1}{3}})^2 f^{1,1}([x] ,[y]) 
=& (c_0 n^{\frac{1}{3}})^2 \sum_{\ell=0}^\kappa 
\Big( \overline{G}(n,\ell+1,[x]) \overline{G}(n,\ell,[y])\\
&-\overline{G}(n,\ell,[x]) \overline{G}(n,\ell+1,[y]) \Big).
		\end{split}
	\end{equation}
\cref{lem:estimates} and \cref{lem:changeofvariables} means that we can restrict the sum over $\ell$ to $n^{1/3+\delta}$. More precisely, for $\ell >n^{\frac{1}{3}+\delta}$, we have from \cref{lem:changeofvariables}
\begin{equation}
	\overline{G}(n,\ell,[x])=\overline{G}(n-\ell,0,[x]-2\ell)
\end{equation}
and  that
\begin{equation}
	x-2\ell= \alpha(n-\ell) -(2-\alpha)\ell -n^{\frac{1}{3}} c_0 \xi.
\end{equation}
This means we can use \cref{lem:estimates} with $m=n-\ell$ and $v=(2-\alpha)\ell m^{-\frac{1}{3}} +n^{\frac{1}{3}} m^{-\frac{1}{3}}c_0 \xi \geq C n^{\delta}$ for some constant $C>0$ since $\alpha<2$. 
Using~\eqref{propproof:overlineGasymp} with both $r=0$ and $r=1$ and using that the Riemann sum converges to an integral means that {the left hand side of} ~\eqref{propproof:eqf11} is equal to 
\begin{equation}
	\begin{split}
		- & \frac{(1+O(n^{-\frac{1}{3}}))}{(2\pi \mathrm{i})^2} \int_0^\infty \text{d} \lambda \int_{\Gamma_{\mathrm{Ai}}} \text{d} \omega_1 \int_{\Gamma_{\mathrm{Ai}}}\text{d} \omega_2 \; \left( \frac{3 \sqrt{3}c_0c_1}{2} \right)^2n^{\frac{1}{3}} \\
		&\times  \big((1-9c_1 \omega_1n^{-\frac{1}{3}}) - (1-9 c_1 \omega_2n^{-\frac{1}{3}})\big) e^{\frac{\omega_1^3}{3} + \frac{\omega_2^3}{3} -\xi \omega_1 -9c_1 \lambda \omega_1 - \eta \omega_2 -9c_1 \omega_2 \lambda} \\
		= & \frac{(1+O(n^{-\frac{1}{3}}))} {(2\pi \mathrm{i})^2}\int_0^\infty \text{d} \lambda \int_{\Gamma_{\mathrm{Ai}}} \text{d} \omega_1 \int_{\Gamma_{\mathrm{Ai}}}\text{d} \omega_2 \; \left( \frac{3 \sqrt{3}c_0c_1}{2} \right)^2  (9c_1 \omega_1 - 9 c_1 \omega_2)\\ 
		&\times e^{\frac{\omega_1^3}{3} + \frac{\omega_2^3}{3} -\xi \omega_1 -9c_1 \lambda \omega_1 - \eta \omega_2 -9c_1 \omega_2 \lambda} \\
		= & \frac{(1+O(n^{-\frac{1}{3}}))}{4} \int_0^\infty \text{d} \lambda \int_{\Gamma_{\mathrm{Ai}}} \text{d} \omega_1 \int_{\Gamma_{\mathrm{Ai}}}\text{d} \omega_2  \, ( \omega_1 -  \omega_2) e^{\frac{\omega_1^3}{3} + \frac{\omega_2^3}{3} -\xi \omega_1 - \lambda \omega_1 - \eta \omega_2 - \omega_2 \lambda},
	\end{split}
\end{equation}
as $c_1 c_0=1/(3 \sqrt{3})$ and using the change of variables $\lambda \mapsto \frac{\lambda}{9c_1}$. One sees that  this is exactly $K_{\mathrm{GOE}}^{11}(\xi,\eta)$ as given in~\eqref{eq:KGOE11}.

Next, we evaluate $f^{2,2}([x],[y])$.  We have from~\eqref{eq:f22} and~\eqref{propproof:eq:scalings}
\begin{equation}\label{propproof:eq:f22eqn1}
\begin{split}
    f^{2,2}&([x],[y]) = \mathrm{sgn}([x]-[y]) + \sum_{\ell=0}^\kappa \sum_{m_1=0}^{[x]} \sum_{m_2=0}^{[y]} \Big(  \overline{H}(n,\ell+1,[x]-m_1) 
    \\& \times \overline{H}(n,\ell,[y]-m_2) 
    - \overline{H}(n,\ell,[x]-m_1) \overline{H}(n,\ell+1,[y]-m_2) \Big) 
\\&-(-1)^n\sum_{m=0}^{[x]} \overline{H}(n,0,[x]-m)+(-1)^n\sum_{m=0}^{[y]} \overline{H}(n,0,[y]-m).
\end{split}
\end{equation}
We have that $\mathrm{sgn}([x]-[y])=-\mathrm{sgn}(\xi-\eta)$.  By \cref{lem:estimates} and \cref{lem:changeofvariables}, we can restrict each of the sums to $n^{\frac{1}{3}+\delta}$, use the asymptotic formulas for $\overline{H}$ given in~\eqref{propproof:overlineHasymp} and use the Riemann sum convergence. We obtain that~\eqref{propproof:eq:f22eqn1} equals 
\begin{equation}
\begin{split}
&-\mathrm{sgn}(\xi-\eta)- \int_0^\infty \text{d}\lambda \int_0^\infty \text{d}\mu_1\int_0^\infty \text{d}\mu_2 \; \frac{ (1+O(n^{-\frac{1}{3}}))n}{(2\pi \mathrm{i})^2} \int_{\Gamma_{\mathrm{Ai}}} \text{d} \omega_1 \int_{\Gamma_{\mathrm{Ai}}} \text{d} \omega_2 \\
&\times \frac{27c_1^2}{n^\frac{2}{3}} \big((1-9c_1 \omega_1n^{-\frac{1}{3}}) - (1-9 c_1 \omega_2n^{-\frac{1}{3}})\big) e^{\frac{\omega_1^3}{3} + \frac{\omega_2^3}{3} - \omega_1(\xi +9c_1 \lambda +3 \sqrt{3} c_1 \mu)  }\\
&\times e^{- \omega_2(\eta +9c_1 \lambda +3 \sqrt{3} c_1 \mu)} -\int_0^{\infty} \text{d}\mu \; \frac{(1+O(n^{-\frac{1}{3}}))}{2\pi \mathrm{i}} \int_{\Gamma_\mathrm{Ai}} d \omega 3 \; \sqrt{3} c_1e^{\frac{\omega^3}{3} -\omega(\xi +3 \sqrt{3}c_1 \mu)}\\&+\int_0^{\infty} \text{d}\mu \; \frac{(1+O(n^{-\frac{1}{3}}))}{2\pi \mathrm{i}} \int_{\Gamma_\mathrm{Ai}} d \omega \; 3 \sqrt{3} c_1e^{\frac{\omega^3}{3} -\omega(\eta+3 \sqrt{3}c_1 \mu)}.
\end{split}
\end{equation}
We change variables $\mu_i \mapsto \frac{\mu_i}{3\sqrt{3}c_1} $ for $i \in \{1,2\}$ (and  $\mu \mapsto \frac{\mu}{3\sqrt{3}c_1} $) as well as $\lambda \mapsto \frac{\lambda}{9c_1}$. The above equation is then equal to,  up to an error of $O(n^{-\frac{1}{3}})$ which we will ignore,
\begin{equation}
\begin{split}
&-\mathrm{sgn}(\xi-\eta)- \int_0^\infty \text{d}\lambda \int_0^\infty \text{d}\mu_1\int_0^\infty \text{d}\mu_2 \; \frac{ 1}{(2\pi \mathrm{i})^2} \int_{\Gamma_{\mathrm{Ai}}} \text{d} \omega_1 \int_{\Gamma_{\mathrm{Ai}}} \text{d} \omega_2 \; (\omega_1-\omega_2) \\
&\times   e^{\frac{\omega_1^3}{3} + \frac{\omega_2^3}{3} - \omega_1(\xi + \lambda + \mu) - \omega_2(\eta + \lambda +\mu) }-\int_0^{\infty}\text{d}\mu \; \mathrm{Ai}(\xi+\mu) +\int_0^{\infty}\text{d}\mu \; \mathrm{Ai}(\eta+\mu) .
\end{split}
\end{equation}
For the second term in the above equation, we split into two terms and integrate each of these terms with respect to $\mu_1$ and $\mu_2$ respectively. We obtain that the above equation is equal to 
\begin{equation}
\begin{split}
&-\mathrm{sgn}(\xi-\eta)- \int_0^\infty \text{d}\lambda \; \int_0^\infty \text{d}\mu \; \mathrm{Ai}(\xi+\lambda)\mathrm{Ai}(\eta+\lambda+\mu)
-\int_0^{\infty}\text{d}\mu \; \mathrm{Ai}(\xi+\mu) \\
&+\int_0^\infty \text{d}\lambda \int_0^\infty \text{d}\mu \; \mathrm{Ai}(\xi+\lambda+\mu)\mathrm{Ai}(\eta+\lambda)
+\int_0^{\infty}\text{d}\mu \; \mathrm{Ai}(\eta+\mu).
\end{split}
\end{equation}
This is exactly $K_{\mathrm{GOE}}^{22}(\xi,\eta)$ as given in~\eqref{eq:KGOE22} after a change of variables in the second and third terms.  

Finally, we consider the $f^{1,2}$ term. From~\eqref{eq:f12}, we need to consider
\begin{equation}\label{propproof:eq:f12eqn1}
\begin{split}
    c_0 n^{\frac{1}{3}} & f^{1,2}([x],[y]) =  - c_0n^{\frac{1}{3}}\sum_{\ell=0}^\kappa \sum_{m=0}^y \big(  \overline{G}(n,\ell+1,[x]) \overline{H}(n,\ell,[y]-m)
\\ & { 
    -\overline{G}(n,\ell,[x]) \overline{H}(n,\ell+1,[y]-m)\big)+ c_0n^{\frac{1}{3}} (-1)^n \overline{G}(n,0,[x])},
\end{split}
\end{equation}
with $x$ and $y$ given in~\eqref{propproof:eq:scalings}. 
By \cref{lem:estimates} and \cref{lem:changeofvariables}, we can restrict each of the sums to $n^{\frac{1}{3}+\delta}$, use the asymptotic formulas for $\overline{G}$ and  $\overline{H}$ given in~\eqref{propproof:overlineGasymp} and ~\eqref{propproof:overlineHasymp} and use the Riemann sum convergence. We obtain that~\eqref{propproof:eq:f12eqn1} equals, up to an error of $O(n^{-\frac{1}{3}})$ which we will ignore, 
\begin{equation}
\begin{split}
&\frac{1}{2 \pi \mathrm{i}} \int_{\Gamma_{\mathrm{Ai}}} \text{d} \omega \;\frac{ 3 \sqrt{3}c_1 c_0}{2} e^{\frac{\omega^3}{3} - \xi \omega} + \int_0^\infty \text{d} \mu \int_0^\infty \text{d} \lambda \; \frac{1}{(2 \pi \mathrm{i})^2}
\int_{\Gamma_{\mathrm{Ai}}} \text{d} \omega_1 \int_{\Gamma_{\mathrm{Ai}}} \text{d} \omega_2 \\
&\times \left( c_1 3 \sqrt{3} \right)^2 c_0 n^{\frac{1}{3}} \big((1-9c_1 \omega_1n^{-\frac{1}{3}}) - (1-9 c_1 \omega_2n^{-\frac{1}{3}})\big) \\
&\times  e^{\frac{\omega_1^3}{3} + \frac{\omega_2^3}{3} - \omega_1(\xi +9c_1 \lambda ) - \omega_2(\eta +9c_1 \lambda +3 \sqrt{3} c_1 \mu) }.
\end{split}
\end{equation}
For the first term in the above equation, we use~\eqref{eq:scalings} to simplify, while for the second term, we use the change of variables $\lambda \mapsto \frac{\lambda}{9c_1}$ and $\mu \mapsto \frac{\mu}{3 \sqrt{3}c_1}$. The above equation then becomes
\begin{equation}
\begin{split}
&\frac{1}{2} \mathrm{Ai}(\xi)-\int_0^\infty \text{d} \mu \int_0^\infty \text{d} \lambda \; \frac{1}{(2 \pi \mathrm{i})^2} 
\int_{\Gamma_{\mathrm{Ai}}} \text{d} \omega_1 \int_{\Gamma_{\mathrm{Ai}}} \text{d} \omega_2 \; \\
&\times (\omega_1-\omega_2)  e^{\frac{\omega_1^3}{3}+ \frac{\omega_2^3}{3}-\omega_1(\xi+\lambda) - \omega_2(\eta+\lambda+\mu)}\\
&=\frac{1}{2} \mathrm{Ai}(\xi)-\int_0^\infty \text{d} \mu \int_\mu^\infty \text{d} \lambda \; \frac{1}{(2 \pi \mathrm{i})^2} 
\int_{\Gamma_{\mathrm{Ai}}} \text{d} \omega_1 \int_{\Gamma_{\mathrm{Ai}}} \text{d} \omega_2 \;  \\
&\times \omega_1 e^{\frac{\omega_1^3}{3}+ \frac{\omega_2^3}{3}-\omega_1(\xi+\lambda-\mu) - \omega_2(\eta+\lambda)}\\
&+\int_0^\infty \text{d} \mu \int_0^\infty \text{d} \lambda \; \frac{1}{(2 \pi \mathrm{i})^2} \int_{\Gamma_{\mathrm{Ai}}} \text{d} \omega_1 \int_{\Gamma_{\mathrm{Ai}}} \text{d} \omega_2 \; \omega_2 e^{\frac{\omega_1^3}{3}+ \frac{\omega_2^3}{3}-\omega_1(\xi+\lambda) - \omega_2(\eta+\lambda+\mu)},
\end{split}
\end{equation}
where we have used the change of variables $\lambda \mapsto \lambda-\mu$ in the second integral. 
For the second term in the above equation, we swap the integrals with respect to $\mu$ and $\lambda$ and integrate with respect to $\mu$. For the third term, we integrate with respect to $\mu$. The above equation is then equal to 
\begin{multline}
\frac{1}{2} \mathrm{Ai}(\xi) - \frac{1}{2} \mathrm{Ai}(\xi) \int_0^{\infty} \text{d} \lambda \; \mathrm{Ai}(\eta+\lambda) \\
+ \frac{1}{2}  \int_0^{\infty} \text{d} \lambda \; \mathrm{Ai}(\xi+\lambda)\mathrm{Ai}(\eta+\lambda)+\frac{1}{2}  \int_0^{\infty} \text{d} \lambda \; \mathrm{Ai}(\xi+\lambda)\mathrm{Ai}(\eta+\lambda).
\end{multline}
This is equal to~\eqref{eq:KGOE12} since
\begin{align*}
1-\int_0^\infty \text{d} \lambda \; \mathrm{Ai}(\eta+\lambda)=& \int_{-\infty}^\infty \text{d} \lambda \; \mathrm{Ai}(\eta+\lambda)-\int_0^\infty \text{d} \lambda \; \mathrm{Ai}(\eta+\lambda) \\
=& \int_0^\infty \text{d} \lambda \; \mathrm{Ai}(\eta-\lambda).
\end{align*}
This completes the proof.
\end{proof}

\begin{proof}[Proof of \cref{lem:estimates}]
	The bounds for $|v| \leq M$ are a consequence of~\eqref{propproof:overlineGasymp} and~\eqref{propproof:overlineHasymp}.  We only need to consider the bounds for $v>M$ and for $\overline{G}$.  We set $a=\alpha-v m^{-\frac{2}{3}}$.
From \cref{lem:startingasymp}, we must bound
\begin{equation}\label{lemproof:estimates:start}
\frac{1}{2\pi \mathrm{i}} \int_{\gamma_a} \text{d} w \; A_m(w,a,0,0,0)e^{m(\tilde{S}_m(w,a,0,0,0)+O(\frac{1}{m^2}))}.
\end{equation} 
The deformation of the contour given in \cref{lem:startingasymp} to $\gamma_a$ does not pick up any additional contributions since no poles are passed under this deformation.  
As in the proof of~\eqref{propproof:overlineGasymp}, we split the contour into two parts, $\gamma_a \cap B(w_+(a),\varepsilon)$ and $\gamma_a \cap B(w_+(a),\varepsilon)^c$.  The same reasoning as given in the proof of~\eqref{propproof:overlineGasymp} means we only need to consider the contribution from $\gamma_a \cap B(w_+(a),\varepsilon)$.  

The local angle of the steepest descent contour at $w_+(a)$ means that we take the change of variables $w=w_+(a)+\mathrm{i} \omega m^{-\frac{1}{2}}$ with $\omega \in (-m^{\frac{1}{2}} \varepsilon, m^{\frac{1}{2}} \varepsilon)$ for $\varepsilon>0$.  Under the change of variables, the main contribution of~\eqref{lemproof:estimates:start} is equal to 
\begin{multline}
\frac{1}{2\pi} \int_{-m^{\frac{1}{2}} \varepsilon}^{m^{\frac{1}{2}} \varepsilon} \text{d} \omega \; m^{-\frac{1}{2}} A_m(w_+(a)+m^{-\frac{1}{2}} \mathrm{i} \omega,a,0,0,0) \\
\times e^{m (\tilde{S}_m(w_+(a)+m^{-\frac{1}{2}} \mathrm{i} \omega,a,0,0,0)+O(\frac{1}{m^2}))}.
\end{multline}
We have that 
$$
 A_m(w_+(a)+m^{-\frac{1}{2}} \mathrm{i} \omega,a,0,0,0)=d(a)m(1+O(m^{-\frac{1}{2}})),
$$
where $d(a)$ is explicit with $0<d(a)<\infty$ for $0<a<\alpha$.  We also have that 
\begin{equation}
\begin{split}
&\tilde{S}_m(w_+(a)+m^{-\frac{1}{2}} \mathrm{i} \omega,a,0,0,0)=S_1(w_+(a)+m^{-\frac{1}{2}} \mathrm{i} \omega,a)\\&+m^{-1} S_3(w_+(a)+m^{-\frac{1}{2}} \mathrm{i} \omega,a,0) - \frac{\log m}{m}+\frac{1}{m}.
\end{split}
\end{equation}
From here, we have that
\begin{equation}
S_1(w_+(a)+m^{-\frac{1}{2}} \mathrm{i} \omega,a)=S_1(w_+(a),a)-\frac{1}{2} S_1''(w_+(a),a)\omega^2+O(m^{-\frac{1}{2}} \omega^3),
\end{equation}
and 
\begin{equation}
m^{-1} S_3(w_+(a)+m^{-\frac{1}{2}} \mathrm{i} \omega,a,0)= m^{-1}S_3(w_+(a) \mathrm{i} \omega,a,0)+O(m^{-\frac{3}{2}}\omega).
\end{equation}
Note that $S_1''(w_+(a),a)>0$ by \cref{lem:D2S1}. 
Putting the above together, extending the region of integration and computing the Gaussian integral, we obtain a bound
\begin{equation}\label{lemproof:estimate:exponent}
\frac{C d(a)}{\sqrt{ m S_1'' (w_+(a),a)}} e^{m S_1(w_+(a),a)},
\end{equation}
where $C>0$. 
Now, a computation shows that $\frac{C d(a)}{\sqrt{ m S_1'' (w_+(a),a)}}$ is of the form $$\frac{\sqrt{\tilde{d}(a)}}{\sqrt{(1+(-4+a)a)m}},$$ where $0<\tilde{d}(a)<\infty$ is a rational function in $a$.  The function $\tilde{d}(a)$ can be computed explicitly, but its exact form is not important. Since $a \leq \alpha -M/m$, we conclude that $\frac{C d(a)}{\sqrt{ m S_1'' (w_+(a),a)}}$ is bounded by a constant that is dependent on $M$.  To bound the exponential term in~\eqref{lemproof:estimate:exponent}, introduce
\begin{equation}
g(a)=S_1(w_+(a),a).
\end{equation}
Then, since $S_1(w_+(\alpha),\alpha)=0$, we can write 
\begin{equation}
\begin{split}
g(a)&=S_1(w_+(a),a)-S_1(w_+(\alpha),\alpha)=\int_0^{a-\alpha}\text{d}s \; g'(\alpha+s)\\&=-\int_0^{\alpha-a}\text{d}s \; g'(\alpha-s).
\end{split}
\end{equation}
We also have that 
\begin{equation}
\frac{d}{da} g(a)=\frac{d}{da} S_1(w_+(a),a)=\frac{ \partial S_1}{\partial w}(w_+(a),a)w_+'(a)+\frac{ \partial S_1}{\partial a}(w_+(a),a).
\end{equation}
Notice that the first term on the right side is zero since $w=w_+(a)$ is a critical point for $S_1(w,a)$. Computing $\frac{ \partial S_1}{\partial a}$ and evaluating at $(w_+(a),a)$ gives
\begin{equation}
g'(a)=\log \bigg[ \frac{1-a+w_+(a)}{a-2w_+(a)} \bigg]=\log \bigg[-1-\frac{2}{\sqrt{1-4a+a^2}-1} \bigg]>0
\end{equation}
for $a<\alpha$.  We can conclude that for $v>M$, there exists some constant $c$ such that 
\begin{equation}
e^{m S_1(w_+(a),a)} \leq e^{-cv}
\end{equation}
as required.  The bounds for $\overline{H}$ follow from a similar computation. 
\end{proof}

Finally, we prove \cref{prop:kernelbounds}.

\begin{proof}[Proof of \cref{prop:kernelbounds}]
The proof of these estimates follow immediately from applying \cref{lem:changeofvariables} and \cref{lem:estimates}.  Note that $f^{2,2}$ is bounded by a constant due to the sign term. 
\end{proof}

\section{Point process formula derivation}  \label{sec:derivation}

\subsection{Inverse Kasteleyn matrix for {$G_n^{\mathbf{m}}$}} 

{In this section, we recall the Kasteleyn matrix for TSSCPPs and give a formula for its inverse from \cite{AC:20}.   
To avoid any confusion, we use the same notation used there. }
The importance of the inverse Kasteleyn matrix is that it gives a way to compute local statistics, which in our case, means that we can establish the correlations for the particle process introduced above. This will be shown the next subsection.

The entries of the skew-symmetric Kasteleyn matrix $K_n=K_n(x,y)_{x,y \in \mathtt{V}_n^{\mathbf{m}}}$ are given by
\begin{equation}\label{KasteleynMatrix}
	K_n((x_1,x_2),(y_1,y_2)) =k_n((x_1,x_2),(y_1,y_2))-k_n((y_1,y_2),(x_1,x_2)),
\end{equation}
where
\begin{equation}	
	k_n((x_1,x_2),(y_1,y_2)) = \begin{cases}
		1 & \mbox{if } [x_1+x_2]_2 =0 , x_2=y_2, x_1-y_1=1, \\
		1 & \mbox{if }  [x_1+x_2]_2=0,|y_2-x_2|=1, x_1=y_1,  \\
		1 & \mbox{if } x_1=x_2, y_1 = y_2 = x_1-1, \\
	0 & \mbox{otherwise.}  
	\end{cases}
	\end{equation}
Here the Kasteleyn orientation is chosen so that the number of counter-clockwise arrows around each face is an odd number. 
The edges along the diagonal are oriented towards the origin. From~\cite{Kas61,Kas63}, $|\mathrm{Pf} K_n|=Z_n$ is equal to the number of \tsscpp s  of size $n+1$. 

To {give formulas for} $K_n^{-1}$, define 
	\begin{equation}\label{eq:pnkl}
	p(n,k,\ell) = \frac{ (n+k-2\ell+1)!(2n-k-\ell+1)!}{(k-\ell)! (3n-k+2-2\ell)!} (-1)^k (3n-3k+2) C_{n-k},
\end{equation}
where $C_n = \frac{1}{n+1} \binom{2n}{n}$ is the $n$'th Catalan number. 
Introduce the following formulas for $0 \leq i \leq 2n-1$:
	\begin{equation}\label{hnb0}
		h_n^{0,\mathbf{b}}(i) =-[i+1]_2+ \sum_{k=0}^n p(n,k,0) \frac{1}{2 \pi \mathrm{i}} \int_{\Gamma_0} \text{d}r \; \frac{(1+r)^{n-k}}{(1-r)r^{i-2k}}, 
	\end{equation}
and
\begin{equation}\label{hnb1}
	h_n^{1,\mathbf{b}}(i) = \sum_{k=0}^n p(n,k,0) \frac{1}{2 \pi \mathrm{i}} \int_{\Gamma_0} \text{d}r \; \frac{(1+r)^{n-k}}{r^{i-2k+1}},
	\end{equation}
as well as the following formulas for $0 \leq i,j \leq 2n-1$:
\begin{equation}\label{tn00}
	\begin{split}
		&t_n^{0,0} (i,j)= \mathbbm{1}_{[i<j]}[i+1]_2[j]_2 - \mathbbm{1}_{[i>j]}[i]_2[j+1]_2 +[j+1]_2h_n^{0,\mathbf{b}}(i)\\
	 & -[i+1]_2 h_n^{0,\mathbf{b}}(j) + \sum_{k_1=0}^n \sum_{\ell_1=0}^{k_1} \sum_{k_2=0}^n \sum_{\ell_2=0}^{k_2} p(n,k_1,\ell_1)p(n,k_2,\ell_2) \frac{1}{(2\pi \mathrm{i})^4} \int_{\Gamma_0}\text{d}r_1  \\
&  \times \int_{\Gamma_0} \text{d}r_2 \int_{\Gamma_0}\text{d}s_1 \int_{\Gamma_0}\text{d}s_2 \; \frac{ (1+r_1)^{n-k_1} (1+r_2)^{n-k_2}}{(1-r_1)r_1^{i-2k_1}(1-r_2)r_2^{j-2k_2}}
		\frac{s_1-s_2}{(s_1 s_2-1)s_1^{\ell_1+1}s_2^{\ell_2+1}}, 
\end{split}
\end{equation}
\begin{equation}\label{tn10}
\begin{split}
	t_n^{1,0} (i,j)=& [j+1]_2h_n^{1,\mathbf{b}}(i)\\
& +	\sum_{k_1=0}^n \sum_{\ell_1=0}^{k_1} \sum_{k_2=0}^n \sum_{\ell_2=0}^{k_2} p(n,k_1,\ell_1)p(n,k_2,\ell_2) \frac{1}{(2\pi \mathrm{i})^4} \int_{\Gamma_0}\text{d}r_1 \int_{\Gamma_0} \text{d}r_2 \\
	& \times \int_{\Gamma_0}\text{d}s_1 \int_{\Gamma_0}\text{d}s_2 \; \frac{ (1+r_1)^{n-k_1} (1+r_2)^{n-k_2}}{r_1^{i-2k_1+1}(1-r_2)r_2^{j-2k_2}}	\frac{s_1-s_2}{(s_1 s_2-1)s_1^{\ell_1+1}s_2^{\ell_2+1}},
\end{split}
\end{equation}
\begin{equation}\label{tn01}
\begin{split}
	t_n^{0,1} (i,j)=& -[i+1]_2 h_n^{1,\mathbf{b}}(j)\\
	& +	\sum_{k_1=0}^n \sum_{\ell_1=0}^{k_1} \sum_{k_2=0}^n \sum_{\ell_2=0}^{k_2} p(n,k_1,\ell_1)p(n,k_2,\ell_2) \frac{1}{(2\pi \mathrm{i})^4} \int_{\Gamma_0}\text{d}r_1 \int_{\Gamma_0} \text{d}r_2 \\
& \times  \int_{\Gamma_0}\text{d}s_1 \int_{\Gamma_0}\text{d}s_2 \; \frac{ (1+r_1)^{n-k_1} (1+r_2)^{n-k_2}}{(1-r_1)r_1^{i-2k_1}r_2^{j-2k_2+1}} 	\frac{s_1-s_2}{(s_1 s_2-1)s_1^{\ell_1+1}s_2^{\ell_2+1}},
\end{split}
\end{equation}
and
\begin{equation}\label{tn11}
\begin{split}
	&t_n^{1,1} (i,j)=
	\sum_{k_1=0}^n \sum_{\ell_1=0}^{k_1} \sum_{k_2=0}^n \sum_{\ell_2=0}^{k_2} p(n,k_1,\ell_1)p(n,k_2,\ell_2) \frac{1}{(2\pi \mathrm{i})^4} \int_{\Gamma_0}\text{d}r_1 \int_{\Gamma_0} \text{d}r_2\\
& \times  \int_{\Gamma_0}\text{d}s_1 \int_{\Gamma_0}\text{d}s_2 \; \frac{ (1+r_1)^{n-k_1} (1+r_2)^{n-k_2}}{r_1^{i-2k_1+1}r_2^{j-2k_2+1}}  	\frac{s_1-s_2}{(s_1 s_2-1)s_1^{\ell_1+1}s_2^{\ell_2+1}}.
\end{split}
\end{equation}

\begin{thm}[{\cite[Theorem 3.2]{AC:20}}]
\label{thm:oldmain}
Suppose that $(x_1,x_2)=(i_1,i_1+2i_2+ \epsilon_i)$, $(y_1,y_2)=(j_1,j_1+2j_2+\epsilon_j)$ with $0\leq i_1 ,j_1 \leq  2n-1$
and $\epsilon_i, \epsilon_j \in \{0,1\}$, where $0 \leq i_2 \leq n- \lfloor (i_1 + \epsilon_i)/2 \rfloor$, and $0 \leq j_2 \leq n- \lfloor (j_1 + \epsilon_j)/2 \rfloor $. 
	
\begin{itemize}
	
\item If $\epsilon_i = \epsilon_j= 1$, then
	\begin{equation}\label{Kinv11}
		\begin{split}
			K_n^{-1}((x_1,x_2),(y_1,y_2)) =& \sum_{\ell_1=0}^{i_1} \sum_{\ell_2=0}^{j_1}  (-1)^{i_2+j_2} (-1)^{\ell_1+\ell_2} \\ & \times \binom{i_2-1+\ell_1}{\ell_1}\binom{j_2-1+\ell_2}{\ell_2}
 t_n^{1,1}(i_1-\ell_1,j_1-\ell_2).
		\end{split}
	\end{equation}

\item If $\epsilon_i = \epsilon_j= 0$, then
	\begin{equation}\label{Kinv00}
		\begin{split}
			K_n^{-1}((x_1,x_2),(y_1,y_2)) &= \sum_{\ell_1=0}^{i_2} \sum_{\ell_2=0}^{j_2}  (-1)^{i_2+j_2}  \binom{i_2}{\ell_1}\binom{ j_2}{\ell_2} t_n^{0,0}(i_1+\ell_1,j_1+\ell_2).
		\end{split}
	\end{equation}

\item If $\epsilon_i = 1$ and $\epsilon_j= 0$, then
	\begin{equation}\label{Kinv10}
		\begin{split}
			&K_n^{-1}((x_1,x_2),(y_1,y_2)) = \sum_{\ell_1=0}^{i_1} \sum_{\ell_2=0}^{j_2}  (-1)^{i_2+j_2} (-1)^{\ell_1} \binom{ i_2-1+\ell_1}{ \ell_1}\binom{j_2}{\ell_2}\\ & \times  t_n^{1,0}(i_1-\ell_1,j_1+\ell_2) - (-1)^{i_2+j_2} \mathbbm{1}_{x_1 \geq y_1} \mathbbm{1}_{x_1+x_2<y_1+y_2}  \binom{j_2-i_2-1}{ i_1-j_1}.
		\end{split}
	\end{equation}

\end{itemize}
\end{thm}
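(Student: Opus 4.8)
The plan is to prove \cref{thm:oldmain} by verifying directly that the matrix $L_n$ whose entries are the right-hand sides of \eqref{Kinv11}, \eqref{Kinv00} and \eqref{Kinv10} is a right inverse of $K_n$; since $K_n$ is a finite skew-symmetric matrix, this yields the two-sided inverse. Concretely, for every pair of vertices $u,v\in\mathtt{V}_n^{\mathbf{m}}$ one must show
\[
\sum_{w}K_n(u,w)\,L_n(w,v)=\mathbbm{1}_{u=v}.
\]
Because each row of $K_n$ has at most four nonzero entries --- a horizontal neighbour, one or two vertical neighbours, and (when $u$ lies on the diagonal $x_1=x_2$) up to two diagonal neighbours --- for a fixed $v$ this is a sparse linear recurrence in the vertex $u$, supplemented by a source condition at $u=v$ and by boundary conditions along the three sides of the triangular region: the left column of rectangular faces, the top edge, and the diagonal. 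So the first step is to write down this recurrence, the second is to check that it holds in the bulk, and the third is to match the source and the three boundary conditions; it is the boundary conditions that, together with consistency, pin down the prefactor $p(n,k,\ell)$ in \eqref{eq:pnkl}.

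For the bulk recurrence I would substitute the contour-integral formulas \eqref{hnb0}--\eqref{tn11} and track how each type of edge acts on the integrand. The horizontal and bulk-diagonal edges shift the power of $r_1$ (or $r_2$), and since the $r$-integrand carries the factor $(1+r)^{n-k}$, the identity $(1+r)=1+r$ shows that the corresponding combination $1-(\mathrm{shift})$ leaves the integral unchanged after a contour deformation that passes no pole --- this is exactly the Kasteleyn equation away from the boundary. The vertical edges are absorbed by the binomial resummations $\sum_{\ell_1}(-1)^{\ell_1}\binom{i_2-1+\ell_1}{\ell_1}(\cdots)$ and $\sum_{\ell_1}\binom{i_2}{\ell_1}(\cdots)$ in \eqref{Kinv11}--\eqref{Kinv10}, which are the generating-function identities $\sum_{m\ge0}\binom{m+\ell}{\ell}z^m=(1-z)^{-\ell-1}$ that make the vertical part of the equation telescope. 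The $s_1,s_2$ integrals, with kernel $\frac{s_1-s_2}{(s_1s_2-1)s_1^{\ell_1+1}s_2^{\ell_2+1}}$, encode the reflecting (``free'') boundary along the diagonal: the locus $s_1s_2=1$ is the fixed set of the reflection $s\mapsto 1/s$, and one checks that the diagonal edges $(x_1,x_1)\sim(x_1\pm1,x_1\pm1)$ act on this kernel in a way invariant under a deformation of the $s$-contours. The source at $u=v$ then appears as the residue at $r=0$ (equivalently $s=1$) produced when the two $r$-deformations would collide, and the three boundary conditions follow from the closed forms $h_n^{0,\mathbf{b}},h_n^{1,\mathbf{b}}$ and the vanishing of the $r$-integrals once the exponent of $r$ is nonnegative. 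The extra term $-(-1)^{i_2+j_2}\mathbbm{1}_{x_1\ge y_1}\mathbbm{1}_{x_1+x_2<y_1+y_2}\binom{j_2-i_2-1}{i_1-j_1}$ in the mixed case \eqref{Kinv10} is the contribution of the triangular wedge of vertices below the source, where the recurrence is inhomogeneous; it has to be produced and propagated separately.

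A cleaner organisation of the same argument would be an induction on $n$: strip off the outermost diagonal layer of $G_n^{\mathbf{m}}$ and relate $K_n^{-1}$ to $K_{n-1}^{-1}$ by a Schur complement, leaving only a base case and one algebraic step, with the prefactor $p(n,k,\ell)$ --- and in particular the factor $3n-3k+2$ and the Catalan number $C_{n-k}$ --- emerging from a hypergeometric (constant-term) summation that implements $n\mapsto n-1$. On either route the hard part is the free boundary. In a bipartite, translation-invariant bulk, $K^{-1}$ would simply be a product of two independent one-variable contour integrals; here the diagonal edges genuinely couple the two spectral variables through $s_1s_2-1$, so the delicate point is to verify that the Kasteleyn equations at and near the diagonal close up with the claimed kernel, and that no spurious poles are created when the $s$-contours are moved. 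Everything else --- the bulk recurrence, the source residue, the binomial resummations --- is routine bookkeeping once that boundary computation is under control.
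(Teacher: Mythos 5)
This theorem is not proved in the present paper at all: it is imported verbatim from \cite[Theorem 3.2]{AC:20}, and the paper simply cites it. So there is no ``paper's own proof'' to compare against here; what one can do is assess whether your plan is a plausible route to the result in \cite{AC:20}.

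Your strategy --- verify that the right-hand sides define a right inverse $L_n$ of the finite skew-symmetric matrix $K_n$, then upgrade to a two-sided inverse --- is indeed the standard, and quite possibly the only feasible, way to prove a formula of this shape, and you correctly locate the crux at the diagonal (free) boundary, where the $(s_1s_2-1)$ coupling prevents the factorisation that makes bipartite translation-invariant computations routine. But as written the proposal is a programme, not a proof. None of the three things you list as ``the work'' are carried out: the bulk Kasteleyn equation is asserted to telescope because ``$(1+r)=1+r$,'' which does nothing by itself; the claim that the vertical edges are ``absorbed'' by the Vandermonde-type binomial resummations is stated without the identity that would make it go through (and in the mixed case \eqref{Kinv10} that sum has alternating signs and a different binomial, so it is not a single uniform generating-function identity); and the boundary matching at the diagonal --- the one step you yourself flag as hard --- is reduced to ``one checks that the diagonal edges act invariantly under deformation of the $s$-contours,'' with no argument. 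The origin and propagation of the extra indicator term in \eqref{Kinv10} is likewise only named, not derived. The alternative ``cleaner'' route (Schur complement, induction on $n$) is a one-sentence gesture with no base case and no computation of the Schur complement; there is nothing there to check. In short: the overall architecture is sound and consistent with how such formulas are verified, but every load-bearing step is deferred, so the proposal does not yet establish the theorem and cannot be checked for correctness at the points where errors would actually arise.
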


Having the formula for $K_n^{-1}$ means that we can compute local statistics {using this classical result}. 

\begin{thm}[{Montroll-Potts-Ward~\cite{MPW63}, Kenyon~\cite{Ken97}}]
\label{thm:localstats}
	The probability of edges $e_1=(v_1,v_2),\dots, e_m=(v_{2m-1},v_{2m})$ on $\mathtt{V}_n^{\mathtt{m}}$ is given by 
	\begin{equation}
		\mathbb{P}_n^{\mathbf{m}}[e_1,\dots, e_m] = \prod_{k=1}^m K_n(v_{2k-1},v_{2k}) \; \mathrm{Pf} \left( K^{-1}_n(v_i,v_j) \right)^T_{1 \leq i,j \leq 2m}.
	\end{equation}
\end{thm}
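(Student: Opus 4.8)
The plan is to derive this from Kasteleyn's theorem together with the Pfaffian analogue of Jacobi's minor identity, the edge prefactor serving to absorb residual signs. Since $\mathbb{P}^{\mathbf{m}}_n$ is the uniform measure on $\mathcal{M}(G_n^{\mathbf{m}})$, the probability that the disjoint edges $e_1,\dots,e_m$ all occur equals $Z(G')/Z_n$, where $G'$ is $G_n^{\mathbf{m}}$ with the $2m$ vertices $v_1,\dots,v_{2m}$ and their incident edges removed and $Z(G')=|\mathcal{M}(G')|$. (If the $v_i$ are not $2m$ distinct vertices the ratio is $0$, and so is the claimed expression, since then $(K_n^{-1}(v_i,v_j))_{i,j}$ has two equal rows; so assume them distinct.) Writing $S=(v_1,\dots,v_{2m})$ and identifying $\mathtt{V}_n^{\mathbf{m}}$ with $\{1,\dots,N\}$, the principal submatrix $K_n[S^{c}]$ of $K_n$ on the complementary index set is, up to relabeling, the Kasteleyn matrix of $G'$ with the restricted orientation.

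The first substantive step is $Z(G')=|\mathrm{Pf}(K_n[S^{c}])|$. The orientation is Kasteleyn for $G_n^{\mathbf{m}}$, so by \cite{Kas61,Kas63} all perfect matchings enter $\mathrm{Pf}(K_n)$ with one common sign and $|\mathrm{Pf}(K_n)|=Z_n$. For $G'$ the key point is that $M\cup\{e_1,\dots,e_m\}$ is a perfect matching of $G_n^{\mathbf{m}}$ for every $M\in\mathcal{M}(G')$; hence for $M,M'\in\mathcal{M}(G')$ the cycles of $M\triangle M'=(M\cup\{e_i\}_i)\triangle(M'\cup\{e_i\}_i)$ are cycles of $G_n^{\mathbf{m}}$ enclosing an even number of its vertices, so the Kasteleyn condition on $G_n^{\mathbf{m}}$ forces each such cycle to contribute trivially to the sign ratio. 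Thus all matchings of $G'$ contribute with one sign to $\mathrm{Pf}(K_n[S^{c}])$ and $|\mathrm{Pf}(K_n[S^{c}])|=Z(G')$. Next, since $\mathrm{Pf}(K_n)=\pm Z_n\neq 0$, the matrices $K_n$ and $K_n^{-1}$ are skew-symmetric and invertible and $|S|=2m$ is even, so the Pfaffian form of Jacobi's identity gives
\[
\mathrm{Pf}\big(K_n^{-1}[S]\big)=\varepsilon(S)\,\frac{\mathrm{Pf}(K_n[S^{c}])}{\mathrm{Pf}(K_n)}
\]
for a sign $\varepsilon(S)=\pm1$ depending only on the set $S$; it can be pinned down via $\det K_n=\mathrm{Pf}(K_n)^2$, $\det(K_n^{-1}[S])=\mathrm{Pf}(K_n^{-1}[S])^2$ and the ordinary Jacobi identity $\det(K_n[S^{c}])=\det K_n\cdot\det(K_n^{-1}[S])$. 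Combining the two displays,
\[
\big|\mathrm{Pf}\big(K_n^{-1}[S]\big)\big|=\frac{Z(G')}{Z_n}=\mathbb{P}^{\mathbf{m}}_n[e_1,\dots,e_m].
\]

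It remains to verify that multiplying by $\prod_{k=1}^m K_n(v_{2k-1},v_{2k})$ and reading $S$ in the prescribed order $(v_1,\dots,v_{2m})$ (together with the transpose in the statement) converts this absolute value into precisely the stated nonnegative quantity; since $|K_n(v_{2k-1},v_{2k})|=1$ for a genuine edge, only a sign is at stake, and this sign reconciliation is the main — essentially the only — obstacle. I would carry it out by fixing a reference matching $M^{*}\in\mathcal{M}(G')$ and expanding the common sign $\mathrm{sgn}_{K_n}(M^{*}\cup\{e_i\}_i)$ of $\mathrm{Pf}(K_n)$ as the product of $\prod_k K_n(v_{2k-1},v_{2k})$, the entries of $K_n$ along $M^{*}$, and the sign of the permutation sorting $(v_1,\dots,v_{2m},\mathrm{supp}\,M^{*})$; matching this against the analogous expansions of the common sign in $\mathrm{Pf}(K_n[S^{c}])$, of $\varepsilon(S)$, and of $\mathrm{Pf}(K_n^{-1}[S])$ in the given order shows that all permutation-sign factors cancel. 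Equivalently, and perhaps more cleanly, one attaches formal weights $t_k$ to the edges $e_k$, observes $\mathrm{Pf}(K_n^{(t)})=\pm\sum_{M}\prod_{k:\,e_k\in M}t_k$ by Kasteleyn, and differentiates in $t_1,\dots,t_m$ at $0$: multilinearity of the Pfaffian then produces $\prod_k K_n(v_{2k-1},v_{2k})$ and the deleted-rows-and-columns Pfaffian with the correct sign in one stroke, and Jacobi's identity finishes the proof.
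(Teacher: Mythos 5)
The paper does not prove this theorem: it is quoted as a classical result and attributed to Montroll--Potts--Ward \cite{MPW63} and Kenyon \cite{Ken97}, so there is no paper-internal proof to compare against. That said, your outline is the standard argument and appears sound. The three pillars are all correctly in place: (i) $\mathbb{P}^{\mathbf{m}}_n[e_1,\dots,e_m]=Z(G')/Z_n$ by uniformity; (ii) $Z(G')=|\mathrm{Pf}(K_n[S^c])|$, where your justification that the restricted orientation is still sign-coherent on $\mathcal{M}(G')$ is the right one --- any alternating cycle of two matchings $M,M'\in\mathcal{M}(G')$ is also an alternating cycle of the perfect matchings $M\cup\{e_k\}$ and $M'\cup\{e_k\}$ of the ambient graph, so Kasteleyn's parity constraint for $G_n^{\mathbf{m}}$ already controls it (one does not need the restricted orientation to be Pfaffian for $G'$ in general, only on this subfamily of cycles, and that is exactly what the argument delivers); and (iii) the Pfaffian form of Jacobi's complementary-minor identity, $\mathrm{Pf}(K_n^{-1}[S])=\pm\,\mathrm{Pf}(K_n[S^c])/\mathrm{Pf}(K_n)$. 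You are also right that the only delicate point is sign reconciliation, which must absorb the edge prefactor $\prod_k K_n(v_{2k-1},v_{2k})$, the ordering of $S$ as $(v_1,\dots,v_{2m})$, and the transpose in the statement (which, by skew-symmetry of $K_n^{-1}$, amounts to the factor $(-1)^m$). Of your two suggested routes, the formal-weight/multilinearity route is the cleaner way to nail down that sign in one stroke and is the one I would recommend writing out if a full proof were required. In short: correct approach, suitable level of detail for a cited classical lemma, with the sign bookkeeping honestly flagged as the remaining technical work.
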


\subsection{Proof of \cref{prop:discrete}}
In this subsection, we prove \cref{prop:discrete}. We first need to show that the particle process on $\mathcal{L}_n$ is given by a Pfaffian point process. 
\begin{lem} \label{lem:discretefirststep}
The point process $\{z_i\}$ on $\mathcal{L}_n$ is a Pfaffian point process with correlation kernel $\mathbf{f}:\mathcal{L}_n \times \mathcal{L}_n \to \mathbb{R}$  given by
\begin{equation}
\mathbf{f}(x,y)=\left( \begin{array}{cc}
f^{1,1}(x,y) & f^{1,2}(x,y) \\
f^{2,1}(x,y) & f^{2,2}(x,y) \end{array} \right),
\end{equation}
for $x,y \in \mathcal{L}_n$ with
\begin{equation}
\begin{split}
f^{1,1}(x,y)&= -K_n^{-1}((x,x+1),(y,y+1)),\\
f^{2,2}(x,y)&= -K_n^{-1}((x,x+2),(y,y+2)),\\
f^{1,2}(x,y)&= \mathbbm{I}_{x=y}-K_n^{-1}((x,x+1),(y,y+2)),\\
f^{2,1}(x,y)&=-f^{1,2}(y,x).
\end{split}
\end{equation}

\end{lem}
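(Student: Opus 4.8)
The plan is to apply the local-statistics formula (\cref{thm:localstats}) to the family of pairwise disjoint vertical edges $e_k=((k,k+1),(k,k+2))$ of $G_n^{\mathbf m}$, $k\in\mathcal L_n$, and then pass from the point process recording the \emph{present} edges to its complement, the particle process $\{z_i\}$, by inclusion--exclusion together with the standard Pfaffian duality identity $\sum_{S}(-1)^{|S|}\mathrm{Pf}(\mathbb K_S)=\mathrm{Pf}(\mathbb J-\mathbb K)$. Since the $e_k$ are pairwise disjoint and each vertex meets exactly one dimer, the indicators $\mathbbm 1[e_k\notin M]$ are genuine Bernoulli random variables and $\{z_i\}$ is a simple point process, so it suffices to identify its correlation functions; also $K_n$ is invertible since $|\mathrm{Pf}(K_n)|=Z_n>0$, so $\mathbf f$ is well defined.

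First I would record the one scalar input coming from the Kasteleyn matrix: from \eqref{KasteleynMatrix} one checks directly that $k_n((k,k+1),(k,k+2))=0$ and $k_n((k,k+2),(k,k+1))=1$, hence $K_n((k,k+1),(k,k+2))=-1$ for every $k$. Next, fix distinct $x_1,\dots,x_m\in\mathcal L_n$, set $v_{2a-1}=(x_a,x_a+1)$ and $v_{2a}=(x_a,x_a+2)$, and apply \cref{thm:localstats} to the edges $e_{x_1},\dots,e_{x_m}$. The prefactor contributes $\prod_a K_n(v_{2a-1},v_{2a})=(-1)^m$; moreover $(K_n^{-1}(v_i,v_j))^T=-(K_n^{-1}(v_i,v_j))$ because $K_n$, and hence $K_n^{-1}$, is skew-symmetric, so the transpose inside the Pfaffian in \cref{thm:localstats} produces a further $(-1)^m$. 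The two signs cancel and one obtains
\[
\mathbb P_n^{\mathbf m}[e_{x_1},\dots,e_{x_m}\in M]=\mathrm{Pf}\big(K_n^{-1}(v_i,v_j)\big)_{1\le i,j\le 2m}=\mathrm{Pf}\big(\mathbb K_{\mathrm{pres}}(x_a,x_b)\big)_{1\le a,b\le m},
\]
where $\mathbb K_{\mathrm{pres}}$ is the $2\times2$-block kernel with entries $\mathbb K_{\mathrm{pres}}^{ij}(x,y)=K_n^{-1}((x,x+i),(y,y+j))$, $i,j\in\{1,2\}$; skew-symmetry of $K_n^{-1}$ makes $\mathbb K_{\mathrm{pres}}$ an antisymmetric block kernel, so the present-edge process is Pfaffian with kernel $\mathbb K_{\mathrm{pres}}$.

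Finally I would pass to the complement. By inclusion--exclusion the correlation functions of $\{z_i\}$ are $\rho_m(x_1,\dots,x_m)=\sum_{S\subseteq\{x_1,\dots,x_m\}}(-1)^{|S|}\mathrm{Pf}(\mathbb K_{\mathrm{pres}}|_S)$; expanding each Pfaffian over perfect matchings and separating the pairs coming from $\mathbb J$ (which must be the diagonal blocks $\{2a-1,2a\}$, each contributing the factor $1$) from those coming from $-\mathbb K_{\mathrm{pres}}$ (contributing the global sign $(-1)^{|S|}$) yields the identity $\sum_{S}(-1)^{|S|}\mathrm{Pf}(\mathbb K_{\mathrm{pres}}|_S)=\mathrm{Pf}\big((\mathbb J-\mathbb K_{\mathrm{pres}})(x_a,x_b)\big)$, where $\mathbb J(x,y)=\left(\begin{smallmatrix}0&1\\-1&0\end{smallmatrix}\right)\mathbbm 1_{x=y}$. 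Setting $\mathbf f=\mathbb J-\mathbb K_{\mathrm{pres}}$ and reading off the four blocks recovers exactly $f^{1,1}(x,y)=-K_n^{-1}((x,x+1),(y,y+1))$, $f^{2,2}(x,y)=-K_n^{-1}((x,x+2),(y,y+2))$, $f^{1,2}(x,y)=\mathbbm 1_{x=y}-K_n^{-1}((x,x+1),(y,y+2))$, and $f^{2,1}(x,y)=-\mathbbm 1_{x=y}-K_n^{-1}((x,x+2),(y,y+1))=-f^{1,2}(y,x)$, the last equality once more by skew-symmetry of $K_n^{-1}$.

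The bulk of this is routine; the only place demanding care is the sign bookkeeping — the interplay of the $(-1)^m$ from $K_n(e_k)=-1$, the $(-1)^m$ from the transpose in \cref{thm:localstats} together with the skew-symmetry of $K_n^{-1}$, and the $(-1)^{|S|}$ in the complementation identity — and fixing the block-ordering convention consistently so that the constant $\mathbbm 1_{x=y}$ lands in $f^{1,2}$ with a $+$ sign and in $f^{2,1}$ with a $-$ sign. The complementation identity $\sum_{S}(-1)^{|S|}\mathrm{Pf}(\mathbb K_S)=\mathrm{Pf}(\mathbb J-\mathbb K)$ is the Pfaffian analogue of the determinantal fact that the complement kernel is $\mathbb I-K$; I would either cite it or include the short matching argument sketched above.
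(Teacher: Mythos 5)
Your proof is correct and follows essentially the same route as the paper: apply the Montroll--Potts--Ward/Kenyon local-statistics formula to the pairwise disjoint vertical edges $e_k$, then pass to the complementary process via the Pfaffian analogue of inclusion--exclusion. If anything, your sign bookkeeping is more careful than the paper's --- you correctly compute $K_n((k,k+1),(k,k+2))=-1$ from \eqref{KasteleynMatrix} (the paper's proof states this equals $1$) and you explicitly track the compensating $(-1)^m$ arising from transposing the $2m\times 2m$ skew-symmetric restriction of $K_n^{-1}$, so that the two factors cancel to give $\mathrm{Pf}\big(K_n^{-1}(v_i,v_j)\big)$ and hence the stated kernel $\mathbf f = \mathbb{J}-\mathbb{K}_{\mathrm{pres}}$.
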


\begin{proof}
For $1 \leq k \leq m$ and $1 \leq i_k \leq n$ with $i_k$'s distinct, let $v_{2k-1}=((i_k,i_k+1))$, $v_{2k}=((i_k,i_k+2))$, and $e_k=(v_{2k-1},v_{2k})$ with $1 \leq k \leq m$.  Then, by ~\cref{thm:localstats}
\begin{equation}
\mathbb{P}_n^{\mathbf{m}}[e_1,\dots, e_m] =  \mathrm{Pf} \left( K^{-1}_n(v_i,v_j) \right)^T_{1 \leq i,j \leq 2m}.
\end{equation}
since $K_n(v_{2k-1},v_{2k})=1$ for $1 \leq k \leq m$.  Recall that there is a particle at $k \in \mathcal{L}_n$ if and only if $((k,k+1),(k,k+2))$ is not covered by dimer, so we want $\mathbb{P}_n^{\mathbf{m}}[e_1^c,\dots, e_m^c]$ where $e_k^c$ means that the edge $e_k$ is not covered by a dimer.  By an inclusion-exclusion argument, 
\begin{equation}
\mathbb{P}_n^{\mathbf{m}}[e_1^c,\dots, e_m^c]= \mathrm{Pf} \left( J_{2m}- ((K^{-1}_n(v_i,v_j)) ^T_{1 \leq i,j \leq 2m}\right),
\end{equation}
where $J_{2m}=(\mathbbm{I}_{j=i+1}-\mathbbm{I}_{j=i-1})_{1 \leq i,j \leq 2m}$. The lemma follows by comparing the entries in the correlation kernel {and using that $K_n$ and $K_n^{-1}$ are skew-symmetric}.
\end{proof}

We will also use the following computation. 
\begin{lem} \label{lem:sintegral}
For $l_1,l_2 \in \mathbb{N}_0$, we have
\begin{equation}
\frac{1}{(2\pi \mathrm{i})^2} \int_{\Gamma_0} \text{d}s_1 \int_{\Gamma_0} \text{d}s_2 \; \frac{s_1-s_2}{(s_1s_2-1) s_1^{l_1+1}s_2^{l_2+1}} = \mathbbm{I}_{l_2=l_1+1}-\mathbbm{I}_{l_2=l_1-1}
\end{equation}
\end{lem}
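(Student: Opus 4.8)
The plan is to evaluate the double integral directly by residues, handling one variable at a time. Both contours are circles about the origin of radius $\rho<1$, so $|s_1s_2|<1$ throughout. For $s_1\in\Gamma_0$ fixed, the integrand, viewed as a function of $s_2$, has a pole of order $l_2+1$ at $s_2=0$ and a simple pole at $s_2=1/s_1$; the latter satisfies $|1/s_1|=1/\rho>1>\rho$, hence lies outside $\Gamma_0$. Therefore the inner integral $\tfrac{1}{2\pi\mathrm{i}}\int_{\Gamma_0}\text{d}s_2\,\tfrac{s_1-s_2}{(s_1s_2-1)s_1^{l_1+1}s_2^{l_2+1}}$ equals the residue at $s_2=0$, i.e.\ the coefficient of $s_2^{l_2}$ in $\tfrac{s_1-s_2}{(s_1s_2-1)s_1^{l_1+1}}$.

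To extract that coefficient I would expand $\tfrac{1}{s_1s_2-1}=-\sum_{m\geq0}(s_1s_2)^m$, valid on the contours, which gives
\[
\frac{s_1-s_2}{s_1s_2-1}=-\sum_{m\geq0}\bigl(s_1^{m+1}s_2^m-s_1^m s_2^{m+1}\bigr),
\]
so the coefficient of $s_2^{l_2}$ is $-s_1^{l_2+1}+s_1^{l_2-1}\mathbbm{I}_{l_2\geq1}$. Dividing by $s_1^{l_1+1}$, the inner integral equals the Laurent polynomial $-s_1^{l_2-l_1}+s_1^{l_2-l_1-2}\mathbbm{I}_{l_2\geq1}$ in $s_1$.

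Finally I would apply $\tfrac{1}{2\pi\mathrm{i}}\int_{\Gamma_0}s_1^{a}\,\text{d}s_1=\mathbbm{I}_{a=-1}$ termwise to obtain $-\mathbbm{I}_{l_2=l_1-1}+\mathbbm{I}_{l_2=l_1+1}\mathbbm{I}_{l_2\geq1}$. Since $l_1\geq0$, the condition $l_2=l_1+1$ already forces $l_2\geq1$, so the last indicator is redundant and the value is $\mathbbm{I}_{l_2=l_1+1}-\mathbbm{I}_{l_2=l_1-1}$, as claimed. (An equivalent route is to expand $\tfrac{1}{s_1s_2-1}$ globally, interchange the absolutely convergent series with both integrals, and use orthogonality of monomials on the circle; the bookkeeping is the same.) There is no real obstacle here — the only point requiring care is the edge behaviour when $l_1=0$ or $l_2=0$, and this is taken care of automatically by the indicator $\mathbbm{I}_{l_2\geq1}$ together with $l_1,l_2\in\mathbb{N}_0$.
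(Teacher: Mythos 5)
Your proof is correct, and it follows a genuinely different route from the one in the paper. You evaluate the $s_2$-integral first by taking the residue at $s_2=0$ (extracting the coefficient of $s_2^{l_2}$ via the geometric series for $1/(s_1s_2-1)$), and then integrate the resulting Laurent polynomial in $s_1$ termwise. The paper instead swaps the order of integration, rewrites $s_1s_2-1 = s_2(s_1 - 1/s_2)$, and \emph{pushes the $s_1$-contour through infinity}, picking up minus the residue at the external pole $s_1 = 1/s_2$; this collapses the double integral to a single $s_2$-integral $-\frac{1}{2\pi\mathrm{i}}\int_{\Gamma_0} (s^{-(l_2-l_1+2)} - s^{-(l_2-l_1)})\,\mathrm{d}s$, which is then read off. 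The two methods give the same answer, but yours is more elementary and avoids a subtlety the paper glosses over: for $l_1=0$ the integrand in $s_1$ only decays like $1/s_1$, so there is in fact a nonzero residue at infinity; that extra contribution just happens to produce $\frac{1}{2\pi\mathrm{i}}\int_{\Gamma_0}\mathrm{d}s_2/s_2^{l_2+2}=\mathbbm{I}_{l_2=-1}=0$ in the subsequent $s_2$-integral. Your approach keeps both residues at the origin and so never encounters this edge case except through the innocuous indicator $\mathbbm{I}_{l_2\geq1}$, which you correctly observe is redundant because $l_1\geq0$.
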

\begin{proof}
We have 
\begin{equation}
\begin{split}
&\frac{1}{(2\pi \mathrm{i})^2} \int_{\Gamma_0} \text{d}s_1 \int_{\Gamma_0} \text{d}s_2 \; \frac{s_1-s_2}{(s_1s_2-1) s_1^{l_1+1}s_2^{l_2+1}} \\
&= \frac{1}{(2\pi \mathrm{i})^2} \int_{\Gamma_0} \text{d}s_2 \int_{\Gamma_0} \text{d}s_1 \; \frac{s_1-s_2}{(s_1-1/s_2) s_1^{l_1+1}s_2^{l_2+2}}  
=-\frac{1}{2 \pi \mathrm{i}} \int_{\Gamma_0} \text{d}s \; \frac{1/s -s}{s^{l_2+2-(l_1+1)}}\\
&=-\frac{1}{2 \pi \mathrm{i}} \int_{\Gamma_0} \text{d}s \; \left(\frac{1}{s^{l_2-l_1+2}} -\frac{1}{s^{l_2-l_1}} \right)
=-(\mathbbm{I}_{l_2=l_1-1}-\mathbbm{I}_{l_2=l_1+1})
\end{split}
\end{equation}
where we have pushed the contour with respect to $s_1$ through infinity to get the second equality. 
\end{proof}

\begin{proof}[Proof of \cref{prop:discrete}]
Using \cref{lem:discretefirststep}, we need to manipulate the expressions for $f^{1,1}(x,y),f^{1,2}(x,y)$, and $f^{2,2}(x,y)$ to get those given in~\eqref{eq:f11}, \eqref{eq:f12} and~\eqref{eq:f22} respectively.  We consider each in turn starting with $f^{1,1}(x,y)$. From \cref{lem:discretefirststep}, \cref{lem:sintegral} and \cref{thm:oldmain},
\begin{equation}
\begin{split}\label{eq:propdiscpf:f11firststep}
f^{1,1}(x,y) =& -K_n^{-1}((x,x+1),(y,y+1)) =-t_n^{1,1}(x,y)\\
=& -\sum_{k_1,k_2=0}^n \sum_{\ell_1=0}^{k_1} \sum_{\ell_2=0}^{k_2} p(n,k_1,\ell_1)p(n,k_2,\ell_2) \\
& \times (\mathbbm{I}_{l_2=l_1+1}-\mathbbm{I}_{l_2=l_1-1})\binom{n-k_1}{x-2k_1}\binom{n-k_2}{y-2k_2},
\end{split}
\end{equation}
where we have used
\begin{equation}
\binom{n-k}{x-2k}=\frac{1}{2 \pi \mathrm{i}} \int_{\Gamma_0} { \text{d}r} \; \frac{(1+r)^{n-k}}{r^{x-2k+1}}.
\end{equation}
We can then split the sums in~\eqref{eq:propdiscpf:f11firststep} to get
\begin{equation}\label{eq:propdiscpf:ftildedef}
	f^{1,1}(x,y)=\tilde{f}^{1,1}(x,y)-\tilde{f}^{1,1}(y,x)
\end{equation}
where
\begin{equation}
\tilde{f}^{1,1}(x,y)=\sum_{k_1,k_2=0}^n \sum_{\ell=0}^{(k_1-1)\wedge k_2} p(n,k_1,\ell+1)p(n,k_2,\ell)  \binom{n-k_1}{x-2k_1}\binom{n-k_2}{y-2k_2}.
\end{equation}
From the definition of $p(n,k,\ell)$ in \eqref{eq:pnkl}, $p(n,k,\ell)=0$ if $k < \ell< {\frac{n+k+1}{2}}$ since
\begin{equation}
\frac{(n+k-2\ell+1)!}{(k-\ell)!}= (k-\ell+1)\dots (k-\ell+n-\ell+1).
\end{equation}
Choose $\tilde{\kappa}$ to be such that 
\begin{equation}\label{eq:propdiscpf:kappatilde}
\min(k_1,k_2) \leq \tilde{\kappa} \leq \min\bigg(\frac{n+k_1+1}{2},\frac{n+k_2+1}{2} \bigg).
\end{equation}
Then, 
\begin{equation}\label{eq:propdiscpf:ftilde}
\tilde{f}^{1,1}(x,y)=\sum_{k_1,k_2=0}^n \sum_{\ell=0}^{\tilde{\kappa}} p(n,k_1,\ell+1)p(n,k_2,\ell)  \binom{n-k_1}{x-2k_1}\binom{n-k_2}{y-2k_2}.
\end{equation}
Notice that we have that $p(n,k,\ell)=(-1)^\ell p(n-\ell,k-\ell,0)$. Define
\begin{equation}
P(n,k,x)=p(n,k,0)\binom{n-k}{x-2k}.
\end{equation}
Then we have
\begin{equation}\label{eq:propdiscpf:Pn}
\begin{split}
(-1)^\ell P(n-\ell,k-\ell,x-2\ell)&=(-1)^\ell p(n-\ell,k-\ell,0)\binom{(n-\ell)-(k-\ell)}{(x-2\ell)-2(k-\ell)}\\&
=p(n,k,\ell)\binom{n-k}{x-2k}.
\end{split}
\end{equation}
Notice that the expression is equal to zero if $k>[\frac{x}{2}]$. 
Substituting~\eqref{eq:propdiscpf:Pn} back into~\eqref{eq:propdiscpf:ftilde} gives 
\begin{equation}
\begin{split}
\label{eq:propdiscpf:ftilde1}
\tilde{f}^{1,1}(x,y) =& -\sum_{k_1,k_2=0}^n \sum_{\ell=0}^{\tilde{\kappa}} P(n-(\ell+1),k_1-(\ell+1),x-2(\ell+1)) \\
& \times P(n-\ell,k_2-\ell,y-2\ell).
\end{split}
\end{equation}
We can choose $\tilde{\kappa}$ to be equal to $\kappa$ as constrained in 
{\eqref{eq:kappa}},
which is valid for the point process on $\mathcal{L}_n$.  Then, we can write
\begin{equation}
P(n,k,x)=\frac{(-1)^k}{k!(n-k)!} \frac{S(n,k,x)}{n-k+1}, 
\end{equation}
where $S(n,k,x)$ is defined in~\eqref{eq:Snwx}. The map $z \mapsto \frac{1}{\Gamma(z)}$ is an entire function and so $w \mapsto S(n,w,x)$ is analytic in $\mathbb{C} \backslash \{ -(n+j+2),-(2n+j+2),-(n+\frac{j+1}{2}); j\geq 0  \}$.  Notice that for a function $F:\mathbb{C} \to \mathbb{R}$ analytic in a neighborhood of $k$, we have
\begin{equation}\label{eq:sumtointegral}
\frac{1}{2\pi \mathrm{i}} \int_{\Gamma_k} \text{d}w \; \frac{F(w)}{\prod_{j=0}^n (w-j)}=\frac{(-1)^{n-k}F(k)}{k!(n-k)!}.
\end{equation}
Using the above fact, we have that
\begin{equation}
P(n,k,x)=\frac{(-1)^n}{2\pi \mathrm{i}} \int_{\Gamma_k} \text{d}w \; \frac{S(n,w,x)}{(n-w+1)\prod_{j=0}^n (w-j)},
\end{equation}
and so 
\begin{equation}
\begin{split} \label{eq:propdiscpf:Ptocontour}
P(n-\ell,k-\ell,x-2\ell)&=\frac{(-1)^{n-\ell}}{2\pi \mathrm{i}} \int_{\Gamma_{k-\ell}} \text{d}w \; \frac{S(n-\ell,w,x-2\ell)}{(n-\ell-w+1)\prod_{j=0}^{n-\ell} (w-j)} \\
&=\frac{(-1)^{n-\ell}}{2\pi \mathrm{i}} \int_{\Gamma_{k}} \text{d}w \; \frac{S(n-\ell,w-\ell,x-2\ell)}{(n-w+1)\prod_{j=\ell}^n (w-j)} \\
&=\frac{(-1)^{n-\ell}}{2\pi \mathrm{i}} \int_{\Gamma_{k}} \text{d}w \; G_{n,\ell,x}(w),
\end{split}
\end{equation}
where we used the change of variables $w \mapsto w-\ell$ and $G_{n,\ell,x}(w)$ is defined in~\eqref{eq:Gnlx}. 
From~\eqref{eq:propdiscpf:Ptocontour}, we also have
\begin{equation}
P(n-(\ell+1),k-(\ell+1),x-2(\ell+1))=-\frac{(-1)^{n-\ell}}{2\pi \mathrm{i}} \int_{\Gamma_{k}} \text{d}w \; G_{n,\ell+1,x}(w).
\end{equation}
{Recall that $\Gamma_{\{x\}}'=\Gamma_{0,1,\dots, \lfloor \frac{x}{2}\rfloor}$ as defined in \cref{sec:asymptotics}}.
Substituting the above equation and~\eqref{eq:propdiscpf:Ptocontour} into~\eqref{eq:propdiscpf:ftilde1}, we have 
\begin{equation} \label{eq:propdiscpf:f11tildelast}
\begin{split}
\tilde{f}^{1,1}(x,y)&=\sum_{k_1,k_2=0}^n \sum_{\ell=0}^{{\kappa}} \left(\frac{1}{2\pi \mathrm{i}} \int_{\Gamma_{k_1}} \text{d}w \; G_{n,\ell+1,x}(w)\right) \left(\frac{1}{2\pi \mathrm{i}} \int_{\Gamma_{k_2}} \text{d}w \; G_{n,\ell,y}(w) \right)\\
&=\sum_{\ell=0}^{\kappa} \left(\frac{1}{2\pi \mathrm{i}} \int_{\Gamma_{\{x\}}'} \text{d}w \; G_{n,\ell+1,x}(w)\right) \left(\frac{1}{2\pi \mathrm{i}} \int_{\Gamma_{\{y\}}'} \text{d}w \; G_{n,\ell,y}(w) \right),
\end{split}
\end{equation}
since in the first line, the sum with respect to $k_1$ and $k_2$ only depend on the residues in the first and second integrals respectively.  {Note that the contours can be restricted to $\Gamma_{\{x\}}'$ and $\Gamma_{\{y\}}'$ due to the poles of the integrands, see~\eqref{eq:Snwxpoles}.}
Substituting the above formula into~\eqref{eq:propdiscpf:ftildedef} and using~\eqref{eq:Goverline} gives~\eqref{eq:f11}.  

We next consider the term $f^{2,2}(x,y)$. We will not do the full computation, but highlight the main differences with with the computation of $f^{1,1}(x,y)$ given above.  We have, using \cref{lem:discretefirststep}, \cref{lem:sintegral} and \cref{thm:oldmain}, 
\begin{equation}
\begin{split}\label{eq:propdiscpf:f22firststep}
&f^{2,2}(x,y) = -K_n^{-1}((x,x+2),(y,y+2))\\
& =-(t_n^{0,0}(x,y)+t_n^{0,0}(x+1,y)+t_n^{0,0}(x,y+1)+t_n^{0,0}(x+1,y+1))\\
&=-\sum_{k_1,k_2=0}^n \sum_{\ell_1=0}^{k_1} \sum_{\ell_2=0}^{k_2} p(n,k_1,\ell_1)p(n,k_2,\ell_2)  (\mathbbm{I}_{l_2=l_1+1}-\mathbbm{I}_{l_2=l_1-1}) \\
&\times \frac{1}{(2 \pi \mathrm{i})^2} \int_{\Gamma_0} \text{d}r_1 \int_{\Gamma_0} \text{d}r_2 \; \frac{(1+r_1)^{n-k_1} (1+r_2)^{n-k_2}(1+r_1^{-1}+r_2^{-1}+r_1^{-1}r_2^{-1})}{(1-r_1)r_1^{x-2k_1}(1-r_2)r_2^{y-2k_2} } \\
& -\sum_{k=0}^n p(n,k,0) \frac{1}{2\pi \mathrm{i}} \int_{\Gamma_0} \text{d}r \; \frac{(1+r)^{n-k}(1+r^{-1})}{r^{x-2k}(1-r)} \\
& +\sum_{k=0}^n p(n,k,0) \frac{1}{2\pi \mathrm{i}} \int_{\Gamma_0} \text{d}r \; \frac{(1+r)^{n-k}(1+r^{-1})}{r^{y-2k}(1-r)} 
-(\mathbbm{I}_{x<y}-\mathbbm{I}_{x>y}) ,
\end{split}
\end{equation}
which follows by combining the terms and simplifying. Simplifying and using the definition of $\tilde{\kappa}$ provided in~\eqref{eq:propdiscpf:kappatilde} and following the steps to get to~\eqref{eq:propdiscpf:ftilde}, we obtain
\begin{equation}
\begin{split}\label{eq:propdiscpf:f22secondstep}
&f^{2,2}(x,y) \\
&=-\sum_{k_1,k_2=0}^n \sum_{\ell=0}^{\tilde{\kappa}} \big( p(n,k_1,\ell+1)p(n,k_2,\ell)-p(n,k_1,\ell+1)p(n,k_2,\ell+1)\big) \\
&\times \frac{1}{(2 \pi \mathrm{i})^2} \int_{\Gamma_0} \text{d}r_1 \int_{\Gamma_0} \text{d}r_2 \; \frac{(1+r_1)^{n-k_1+1} (1+r_2)^{n-k_2+1}}{(1-r_1)r_1^{x-2k_1+1}(1-r_2)r_2^{y-2k_2+1} } 
\\
& -\sum_{k=0}^n p(n,k,0) \frac{1}{2\pi \mathrm{i}} \int_{\Gamma_0} \text{d}r \; \frac{(1+r)^{n-k+1}}{r^{x-2k+1}(1-r)}\\
& +\sum_{k=0}^n p(n,k,0) \frac{1}{2\pi \mathrm{i}} \int_{\Gamma_0} \text{d}r \; \frac{(1+r)^{n-k+1}}{r^{y-2k+1}(1-r)}-(\mathbbm{I}_{x<y}-\mathbbm{I}_{x>y}).
\end{split}
\end{equation}
We now use the following identity:
\begin{equation}
\begin{split}
\frac{1}{2\pi \mathrm{i}} \int_{\Gamma_0} \text{d}r \; \frac{(1+r)^{n-k+1}}{r^{x-2k+1}(1-r)}= & \sum_{m=0}^{x-2k} \frac{1}{2\pi \mathrm{i}} \int_{\Gamma_0} \text{d}r \; \frac{(1+r)^{n-k+1}}{r^{x-2k+1-m}} \\
= & \sum_{m=0}^{x} \frac{1}{2\pi \mathrm{i}} \int_{\Gamma_0} \text{d}r \; \frac{(1+r)^{n-k+1}}{r^{x-2k+1-m}},
\end{split}
\end{equation}
where the last equality follows because there is no pole in the integrand when $m>x-2k$.  With this equation,~\eqref{eq:propdiscpf:f22secondstep} becomes
\begin{equation}
\begin{split}\label{eq:propdiscpf:f22thirdstep}
&f^{2,2}(x,y)=\sum_{k_1,k_2=0}^n \sum_{\ell=0}^{\kappa} \sum_{m_1=0}^x \sum_{m_2=0}^y 
\big( \tilde{P}_{\ell+1} (n,k_1,x-m_1)\tilde{P}_{\ell} (n,k_2,y-m_2)\\ 
&-\tilde{P}_{\ell} (n,k_1,x-m_1)\tilde{P}_{\ell+1} (n,k_2,y-m_2)\big)-(-1)^n\sum_{k=0}^n \sum_{m=0}^x \tilde{P}_0(n,k,x-m) \\
& +(-1)^n\sum_{k=0}^n \sum_{m=0}^y \tilde{P}_0(n,k,y-m)+\mathrm{sgn}(x-y),
\end{split}
\end{equation}
where 
\begin{equation}
\begin{split}
\tilde{P}_{\ell} (n,k,x)&=p(n-\ell,k-\ell,x-2\ell) \binom{n-k+1}{x-2k} \\
&=\frac{(-1)^{n-\ell}}{2 \pi \mathrm{i}} \int_{\Gamma_k} \text{d}w \; G_{n,\ell,x}(w) \frac{n-w+1}{n-x+w},
\end{split}
\end{equation}
{which follows using the same steps to obtain~\eqref{eq:propdiscpf:Ptocontour} from~\eqref{eq:propdiscpf:Pn}.} Note that the factors of $(-1)^n$ in front of the double sums in~\eqref{eq:propdiscpf:f22thirdstep} is from~\eqref{eq:sumtointegral}.  
Substituting the above equation back into~\eqref{eq:propdiscpf:f22thirdstep} and using the same step used to obtain~\eqref{eq:propdiscpf:f11tildelast} gives the formula for $f^{2,2}(x,y)$ found in~\eqref{eq:f22}.

We finally consider the term $f^{1,2}(x,y)$. We will again not perform the full computation, but extract the main formula and point to the computations given for $f^{1,1}(x,y)$ and $f^{2,2}(x,y)$ above {for the key ideas}.  We have, using \cref{lem:discretefirststep}, \cref{lem:sintegral} and \cref{thm:oldmain}, 
\begin{equation}
\begin{split}
&f^{1,2}(x,y)=\mathbbm{I}_{x=y} -K_n^{-1}((x,x+1),(y,y+2))\\
=&\mathbbm{I}_{x=y}+t_n^{1,0}(x,y)+t_n^{1,0}(x,y+1)-\mathbbm{I}_{x\geq y}\mathbbm{I}_{2x+1<2y+2}\binom{0}{x-y}\\
=&t_n^{1,0}(x,y)+t_n^{1,0}(x,y+1)\\
=&\sum_{k=0}^n p(n,k,0) \frac{1}{2\pi \mathrm{i}} \int_{\Gamma_0} \text{d} r \; \frac{(1+r)^{n-k}}{r^{x-2k+1}} +\sum_{k_1,k_2=0}^n \sum_{\ell_1=0}^{k_1} \sum_{\ell_2=0}^{k_2} p(n,k_1,\ell_1) p(n,k_2,\ell_2)  \\
&\times (\mathbbm{I}_{\ell_2=\ell_1+1} - \mathbbm{I}_{\ell_2=\ell_1-1})\frac{1}{(2\pi \mathrm{i})^2} \int_{\Gamma_0}\text{d}r_1 \int_{\Gamma_0} \text{d}r_2 \; \frac{(1+r_1)^{n-k_1}}{r_1^{x-2k_1+1}} \frac{(1+r_2)^{n-k_2+1}}{r_2^{y-2k_2+1}(1-r_2)} \\
=&\sum_{k=0}^n p(n,k,0) \frac{1}{2\pi \mathrm{i}} \int_{\Gamma_0} \text{d} r \; \frac{(1+r)^{n-k}}{r^{x-2k+1}} -\sum_{k_1,k_2=0}^n \sum_{\ell=0}^{\tilde{\kappa}} \big( p(n,k_1,\ell+1) p(n,k_2,\ell)\\
&-p(n,k_1,\ell) p(n,k_2,\ell+1) \big) \binom{n-k_1}{x-2k_1} \frac{1}{2\pi \mathrm{i}} \int_{\Gamma_0} \text{d}r \; \frac{(1+r)^{n-k_2+1}}{r^{x-2k_2+1}(1-r)},
\end{split}
\end{equation}
where $\tilde{\kappa}$ is defined in~\eqref{eq:propdiscpf:kappatilde}. We proceed mirroring the computations for  $f^{1,1}(x,y)$ and $f^{2,2}(x,y)$ above; we omit this part of the computation.  
\end{proof}

\bibliographystyle{alpha}
\bibliography{Biblio}

\end{document}